\setlist{nosep}
\patchcmd{\thebibliography}{\section*{\refname}}{\section{References}}{}{}
\title{Analysis and X-ray tomography}
\author{%
Joonas Ilmavirta\\%
\scriptsize\texttt{joonas.ilmavirta@jyu.fi},
\scriptsize\texttt{joonas.ilmavirta.research@gmail.com}
}
\date{\today}
\theoremstyle{plain}
\newtheorem{theorem}{Theorem}[section]
\newtheorem{lemma}[theorem]{Lemma}
\newtheorem{proposition}[theorem]{Proposition}
\theoremstyle{definition}
\newtheorem{definition}[theorem]{Definition}
\theoremstyle{remark}
\newcommand\Xqed[1]{\leavevmode\unskip\penalty9999 \hbox{}\nobreak\hfill\quad\hbox{#1}}
\newenvironment{ex}{\exx}{\Xqed{$\bigcirc$}\endexx}
\newenvironment{bex}{\bexx}{\Xqed{$\bigcirc$}\endbexx}
\newcommand{\qa}{\begin{ex}Do you have any questions or comments regarding section~\thesection? Was something confusing or unclear? Were there mistakes? What new figure would help?\end{ex}}
\newcommand{\T}{\mathbb{T}}
\newcommand{\R}{\mathbb{R}}
\newcommand{\Z}{\mathbb{Z}}
\newcommand{\C}{\mathbb{C}}
\newcommand{\N}{\mathbb{N}}
\newcommand{\Sphere}{\mathbb{S}}
\newcommand{\p}{\mathcal{P}}
\newcommand{\A}{\mathcal{A}}
\newcommand{\B}{\mathcal{B}}
\newcommand{\ip}[2]{\left\langle#1,#2\right\rangle}
\newcommand{\der}{\mathrm{d}}
\newcommand{\dd}{\,\der}
\newcommand{\eps}{\varepsilon}
\renewcommand{\phi}{\varphi}
\newcommand{\abs}[1]{\left\lvert #1 \right\rvert}
\newcommand{\aabs}[1]{\left\| #1 \right\|}
\DeclareMathOperator{\spt}{spt}
\newcommand{\dummy}{{\,\cdot\,}}
\newcommand{\xrt}{\mathcal{I}}
\newcommand{\ft}{\mathcal{F}}
\DeclareMathOperator{\id}{id}
\newcommand{\pud}{{\bar D^*}}
\newcommand{\rot}{\mathcal{R}}
\newcommand{\h}{\mathcal{H}}
\newcommand{\inwb}{{\partial_{\mathrm{in}}(S\Omega)}}
\newcommand{\puna}[1]{\textcolor{red}{#1}}
\newcommand{\sini}[1]{\textcolor{blue}{#1}}
\newcommand{\viher}[1]{\textcolor{Green}{#1}}
\newcommand{\todo}[1]{}
\begin{document}

\maketitle

\noindent
These are lecture notes for the course ``Analysis and X-ray tomography''.
The course is a broad overview of various tools in analysis that can be used to study X-ray tomography.
The focus is on tools and ideas, not so much on technical details and minimal assumptions.
Only very basic functional analysis is assumed as background.
Exercise problems are included.


{
\renewcommand{\baselinestretch}{-1.5}
\small
\tableofcontents
}

\clearpage

\setcounter{section}{-1}

\section{Foreword}

These lecture notes are intended to be a point of entry to X-ray tomography and inverse problems more generally for anyone with a background in analysis.
Most treatises on the mathematics of X-ray tomography pick a single tool and dig deep with it, but here the goal is the opposite: these notes instruct you to pick up several different tools and use them towards the same goal.
I hope you, the reader, have an answer of some depth to two questions upon having read these notes:
What is the X-ray transform?
How can the various tools of analysis be used together to tackle a practical problem?

Due to the diversity of the material I expect you have varying amounts of background knowledge on the tools we employ.
Some tools only appear in passing, some are often mentioned but never used in a serious way, and some are in heavy use.
What is relevant, interesting, or in any way worth one's while depends on the reader, and I expect every reader to find something boring or incomprehensible on these pages.
What I hope you to find in addition are connections;
I aim to guide you to think of the same problem in numerous different ways and see connections.

As a necessary consequence of breadth is shallowness.
If you have prior familiarity with some of the topics we touch, you can supply much context and detail that I have left unwritten.
I hope the way less familiar topics are treated can function as an invitation to study them further or refresh your memory of prior studies.
The breadth gives you room to pick a focus and ignore what feels too alien.
I will not refrain from taking detours outside the main scope, and it is up to you to identify and ignore the ones that are not for you.
Some of the discussions are left somewhat imprecise; while developing a more complete theory of many of the tools we use would be interesting, it would distract from the main objective of X-ray tomography.
These notes hopefully serve as an invitation to dig deeper into the many related subfields of mathematics.

The X-ray transform has numerous aspects one could study.
These notes focus on injectivity or uniqueness questions only.
Matters of stability, larger function spaces, regularization, and essentially all numerical and practical aspects are left out.
The aim is to keep focus on a single problem --- injecitivity of the X-ray transform --- and spread the focus across numerous different ways to solve this one problem.

The two prerequisites to gain an access to this material are introductory functional analysis and some level of mathematical maturity.
Earlier study of inverse problems, measure theory, differential geometry, distribution theory, Fourier analysis, or partial differential equations will most certainly help but is not required.

I have given courses based on this material at
University of Jyv\"askyl\"a (several times, starting Fall 2017),
Rice University (Spring 2019),
and Tampere University (Spring 2021).
I would be happy to hear of any implementations of courses making use of this material.
The material is designed to fill 15 lectures of 90 minutes, supported with exercises.

Previous feedback has been very useful and new feedback is welcome.
Whether you read these notes within or outside a course I give, an email with suggestions or other feedback will be received with joy.

\section{Introduction}
\label{sec:intro}

\subsection{Direct and inverse problems}

Consider a physical system whose behavior depends on some parameters.
Here are some examples:
\begin{enumerate}
\item X-ray images depend on how the object attenuates X-rays (described by an attenuation coefficient depending on position).
\item The way in which boundary current (current flux density) depends on boundary voltage of an electrically conducting object depends on the (position-dependent) conductivity.
\item The spectrum of oscillations of a drum depends on the shape of the drum.
\end{enumerate}

\noindent
The direct problem asks to determine the behavior, given the parameters:
\begin{enumerate}
\item Given the attenuation coefficient, find the attenuation of any X-ray.
\item Given the conductivity, find how the boundary current depends on boundary voltage.
\item Given the drum shape, find the spectrum.
\end{enumerate}

\noindent
The inverse problem asks the opposite:
\begin{enumerate}
\item Given the attenuation data for all lines, find the attenuation coefficient everywhere.
See figure~\ref{fig:xrt}.
\item Given how the boundary current depends on boundary voltage, find the conductivity everywhere inside.
See figure~\ref{fig:eit}.
\item Given the spectrum, find the shape.
\end{enumerate}

\begin{figure}[t]
    \centering
    \includegraphics[scale=0.4,trim={3cm 0 3cm 0},clip,page=2]{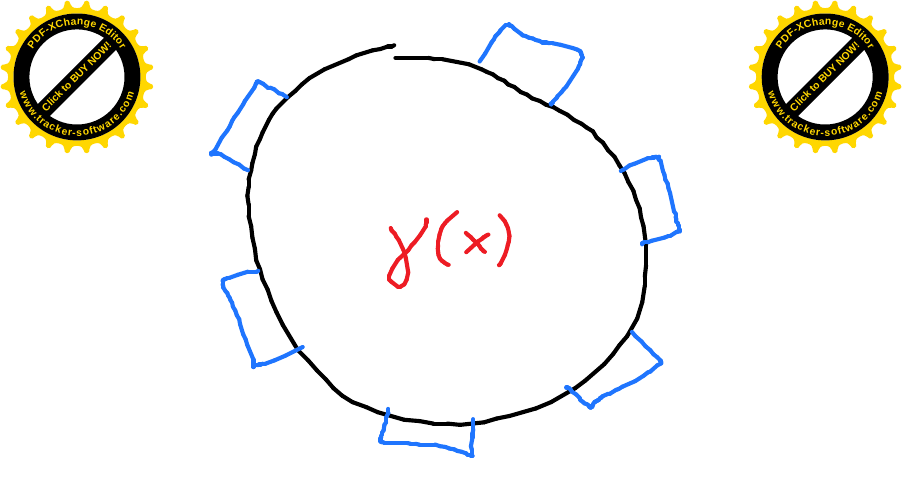}
    \caption{In X-ray tomography the aim is to reconstruct a function in \sini{a domain} from its integrals over \puna{all lines through the domain}.}
    \label{fig:xrt}
\end{figure}

\begin{figure}[t]
    \centering
    \includegraphics[scale=0.5,trim={3cm 0 3cm 0},clip,page=1]{xrt-figs1c.pdf}
    \caption{In electrical impedance tomography (EIT) the task is to find a \puna{conductivity~$\gamma$ that depends on position} from measurements of voltage and current at \sini{electrodes placed at the boundary}.}
    \label{fig:eit}
\end{figure}

\noindent
These inverse problems are theoretical problems in physics.
We are interested in the mathematical formulations of these problems, particularly the first one.
Solving the mathematical problem is a necessary step in solving the physical problem, but there are many more steps to take.
We will ignore numerical implementation, data acquisition, and other practical considerations, and focus on the underlying mathematics.

It is not at all unusual that a physical problem becomes a mathematical problem after some analysis.
This is done in a number of courses in physics, and physicists are well acquainted with solving mathematical problems arising from physics.
The issue with these three inverse problems is that the underlying mathematical problems are hard:

\begin{enumerate}
\item Given the integral of a continuous (or other) function $\R^n\to\R$ over each line, reconstruct the function.
\item Let $\Omega\subset\R^n$ be a nice domain and $\gamma\colon\Omega\to(0,\infty)$ with $\log(\gamma)\in L^\infty$. Given $\{(u|_{\partial\Omega},\gamma\nu\cdot\nabla u|_{\partial\Omega})
;u\in H^1(\Omega),\nabla\cdot(\gamma\nabla u)=0\}$, find~$\gamma$.
\item Given the Dirichlet spectrum of the Laplace operator on a domain $\Omega\subset\R^n$, find the domain.
\end{enumerate}

\noindent
Here and henceforth ``nice'' is not a precise term, but used when we want to avoid a precise technical definition for the sake of clarity.

The first one is the simplest, also because it is linear.
The second one is harder and there are some big open problems related to it, but it is still relatively well understood.
Our understanding of the third problem is very limited.

To give specific examples, the first problem has been solved for compactly supported distributions, the second one for $n=2$ (and $n\geq3$ if~$\gamma$ is Lipschitz), and the third one very partially (there are some counterexamples and rigidity results and very few full uniqueness results).

\begin{ex}
Let us then see how the mathematical and physical versions of the first problem are related.
Feel free to make any regularity assumptions on the function~$f$.
(You have this freedom throughout the course; if your idea needs more derivatives than you are given, go ahead and assume more.)

Consider a ray of light traveling on the real axis in the positive direction.
Let the intensity at $x\in\R$ be~$I(x)$.
If the attenuation function is $f\colon\R\to\R$ (a sufficiently regular positive function), then~$I$ satisfies the Beer--Lambert law
\begin{equation}
I'(x)=-f(x)I(x).
\end{equation}
Solve this differential equation.
Show that if $I(0)\neq0$ (if the intensity was zero, there would not be any real measurement), then the knowledge of~$I(0)$ and~$I(L)$ determines $\int_0^Lf(x)\dd x$.
\end{ex}

In physics, the attenuation coefficient is often denoted by~$\mu$.
Since it is the most important function on this course, it will be most convenient to follow the mathematical convention and call it~$f$.

\begin{ex}
Consider a bounded domain (some physical object) $\Omega\subset\R^3$.
Suppose the attenuation is described by a continuous function $f\colon\R^3\to[0,\infty)$ with $f=0$ in $\R^3\setminus\Omega$.
Consider a line segment $\gamma\colon[0,L]\to\R^3$, $\gamma(t)=x_0+tv$, and suppose that~$\gamma(0)$ and~$\gamma(L)$ are both outside~$\Omega$.
Suppose that we fire an X-ray beam along~$\gamma$ and measure the initial and final intensity.
Argue that such a measurement determines the integral of~$f$ over~$\gamma$.
\end{ex}

Attenuation is caused by two kinds of events:
photons are absorbed and scattered by the medium.
The attenuation coefficient is the sum of an absorption coefficient and a scattering coefficient.
We will not go deeper into the physics in this course.

\subsection{Goals}

All the mathematical inverse problems above are of the following form:
Consider a function $F\colon X\to Y$.
Given~$F(x)$, find~$x$.

The direct problem is finding the function~$F$ (and proving it is well defined), and this function is called the forward operator.
The function~$F$ can be complicated.
Let us see what the sets~$X$ and~$Y$ are in the three examples above:

\begin{enumerate}
\item $\{\text{continuous compactly supported functions supported in }\bar\Omega\subset\R^n\}\\{}\qquad\to\{\text{real-valued functions on the set of lines}\}$
\item $\{\gamma\colon\Omega\to(0,\infty);\log(\gamma)\in L^\infty\}\\{}\qquad\to
\p(H^{1/2}(\partial\Omega)\times H^{-1/2}(\partial\Omega))
$
\item $\{\text{smooth bounded domains in }\R^n\}\\{}\qquad\to\{\text{multisets of positive real numbers}\}$
\end{enumerate}

\noindent
Once one understands the forward operator, one can start studying the corresponding inverse problem.
So, what does it exactly mean to find~$x$?
There are several kinds of goals:

\begin{itemize}
\item Uniqueness: Show that if $F(x)=F(x')$, then $x=x'$.
\item Reconstruction: Give a formula or other method to reconstruct~$x$, given~$F(x)$. That is, find a left inverse function $G\colon Y\to X$ so that $G\circ F=\id_X$.
\item Stability: Show that if $F(x)\approx F(x')$, then $x\approx x'$. Equip the spaces~$X$ and~$Y$ with suitable norms or topologies, and prove that the left inverse~$G$ is continuous.
\end{itemize}

\noindent
A left inverse is what we use to process the data.
We have measured~$F(x)$, and we compute~$G(F(x))$ to find~$x$.
There is no need for a two-sided inverse, and there can be several ways~$G$ to analyze the data.

Ideally, we want a stable reconstruction, so that the left inverse~$G$ is continuous.
This has nothing to do with continuity of the unknown function; the operators~$F$ and~$G$ can map between any kinds of function spaces.

The study of any mathematical inverse problem starts with uniqueness, and that is what we shall focus on in this course.
That is, our sole goal is to prove that a certain function~$F$ is injective.
Some uniqueness proofs immediately give a formula for~$G$.

One important aspect we will ignore is range characterizations.
This is about finding what kind of data can really arise from real measurements --- finding the set $F(X)\subset Y$.

\subsection{The X-ray transform}

The forward operator in the X-ray tomography problem is known as the X-ray transform.
There are several different notations out there.
We will denote it by~$\xrt$.
It maps functions on~$\R^n$ into functions on the set of all lines.

In general, it is defined so that if~$f$ is a function in~$\R^n$ and~$\gamma$ is a line in~$\R^n$, then~$\xrt f(\gamma)$ is the integral of~$f$ over~$\gamma$.
This definition can be extended to various classes of functions, or even distributions.
To emphasize, let us give this as s definition:

\begin{definition}
\label{def:xrt}
Let $f\colon\R^n\to\R$ (or~$\C$) be a sufficiently regular function.
Denote by~$\Gamma$ the set of all straight lines in~$\R^n$.
The X-ray transform of~$f$ is the function $\xrt f\colon\Gamma\to\R$ (or~$\C$) defined by letting~$\xrt f(\gamma)$ be the integral of~$f$ over~$\gamma$.
\end{definition}

Let us see one example of a definition precisely.

\begin{ex}
Let $B\subset\R^n$ be the unit ball, and let~$C_B$ denote the space of continuous functions $f\colon\R^n\to\R$ with $f(x)=0$ for $x\notin\bar B$.
Show that if the space~$C_B$ is equipped with the norm $\aabs{f}=\sup_B\abs{f}$, then it is a Banach space.
\end{ex}

\begin{ex}
\label{ex:xrt-def}
Let us parametrize all lines in~$\R^n$ with $x\in\R^n$ and $v\in \Sphere^{n-1}$.
Explain why $\xrt\colon C_B\to C_b(\R^n\times \Sphere^{n-1})$ given by
\begin{equation}
\xrt f(x,v)
=
\int_{-\infty}^\infty f(x+tv)\dd t
\end{equation}
is well defined (the defining integral exists and~$\xrt f$ is continuous), linear, and continuous (the operator has bounded norm).
Here $C_b(\R^n\times \Sphere^{n-1})$, the space of continuous and bounded functions $\R^n\times \Sphere^{n-1}\to\R$, is also equipped with the supremum norm.
(It turns out that this~$\xrt$ is injective but it does not have a continuous left inverse.)
\end{ex}

We will not pursue optimal regularity in this course.
Our interest will be in ideas and tools, not proving theorems with sharp assumptions.
A reader with suitable experience in analysis is invited to consider lower regularity versions of the results presented here.

In particular, we want to show that the X-ray transform defined in exercise~\ref{ex:xrt-def} is injective.
If we need to make additional assumptions like differentiability, we will.
Since the operator is linear, we need to show that $\xrt f=0$ implies $f=0$.
We will prove this result in a number of different ways and review the necessary tools.
This is the whole plan for this course.

\begin{ex}
Physically, there is a constraint on the attenuation function~$f$.
Namely, the attenuation must be non-negative: $f\geq0$.
Recall the Beer--Lambert law and explain why this is physically reasonable.
\end{ex}

\begin{ex}
Prove that if $f\in C_B$, $f\geq0$, and $\xrt f=0$ (the integral is zero over all lines), then $f=0$.
This is far easier to prove than injectivity of the X-ray transform~$\xrt$.
Does the desired uniqueness result for non-negative attenuation functions follow from this observation?
\end{ex}

Non-continuous attenuation functions are physically relevant.
We restrict our attention to continuous functions for technical convenience.
In some exercises we will consider non-continuous functions, but they will be integrable over each geodesic.

In one dimension the problem is hopeless.
Therefore we make the standing assumption that the dimension~$n$ is at least~$2$ unless otherwise mentioned.

\begin{ex}
\label{ex:1.7}
Show that the X-ray transform $\xrt\colon C_B\to C_b$ as defined in exercise~\ref{ex:xrt-def} is not injective if $n=1$.
\end{ex}

\subsection{The Radon transform and parametrizations}
\label{sec:radon-transform}

In the X-ray transform a function is integrated over all lines.
In the Radon transform a function is integrated over all hyperplanes.
In the plane these two transforms coincide, but in higher dimensions they do not.

Let us give a more detailed description of the Radon transform.
Let~$H$ be the set of all hyperplanes in~$\R^n$.
Then the Radon transform of a, say, compactly supported continuous function $f\colon\R^n\to\R$ is a function $Rf\colon H\to\R$ given by $Rf(h)=\int_hf\dd\h^{n-1}$.
The integral over the hyperplane~$h$ is of course taken with respect to the Hausdorff measure of dimension $n-1$.
This is the same thing as identifying the hyperplane (isometrically) with~$\R^{n-1}$ and using the usual Lebesgue measure.
Or you can use any sensible way to integrate over a hyperplane; they will all give the same result when the function is continuous and compactly supported.

\begin{ex}
\label{ex:radon-xrt}
Explain how one can calculate the Radon transform of a function, given its X-ray transform.
Then explain how injectivity of the Radon transform implies injectivity of the X-ray transform.
\end{ex}

Whichever transform we study, we need to describe the lines or hyperplanes somehow.
There are various options (see figure~\ref{fig:line-parametrizations}):

\begin{itemize}
\item Consider the abstract set of all lines in~$\R^n$.
\item Parametrize a line with a point $x\in\R^n$ and a direction $v\in \Sphere^{n-1}$. The line is $x+v\R$.
\item Parametrize a line in~$\R^2$ with the closest point to the origin. (This only fails to parametrize the lines through origin.)
\end{itemize}

\begin{figure}[t]
    \centering
    \includegraphics[scale=0.4,trim={3cm 0 3cm 0},clip,page=3]{xrt-figs1c.pdf}
    \caption{Two ways to parametrize a line in the plane: giving \sini{a point and a direction} or giving \viher{the closest point} to \puna{the origin}.}
    \label{fig:line-parametrizations}
\end{figure}

\noindent
This is not all.
One can also use fan beam coordinates, parallel beam coordinates, or identify a line with a direction and the boundary point of entrance.
This last parametrization will occur as~$\inwb$ in later sections.
\todo{Näistäkin kuvat?}

When the parametrization of lines is redundant, the X-ray transform should have the same value with different parameters representing the same line.
This is a simple example of a (partial) range characterization.
We will not try to characterize $\xrt(C_B)\subset C_b(\R^n\times \Sphere^{n-1})$, for example.

Hyperplanes in~$\R^n$ can also be parametrized by the closest point to the origin (with difficulties at the origin), just like one can do with lines in the plane.
This is common in the analysis of the Radon transform.

\begin{ex}
\label{ex:1.9}
Consider the characteristic function of a ball centered at the origin with unit radius.
Three ways to describe lines were listed above.
Using the first one (abstract lines), the X-ray transform of the characteristic function evaluated at a line~$\gamma$ is simply the length of the segment of~$\gamma$ that meets the ball.

Find the X-ray transform using the other two ways listed above to describe the lines.
Now you should get explicit formulas instead of abstract descriptions.
\end{ex}

\begin{ex}
Let us use the second parametrization given above, characterizing lines as $x+v\R$.
The X-ray transform of a certain function $f\colon\R^n\to\R$ is
\begin{equation}
\xrt f(x,v)
=
\begin{cases}
\sqrt{2+(v\cdot x)^2-\abs{x}^2} & \text{when }2+(v\cdot x)^2-\abs{x}^2\geq0
\\
0 & \text{otherwise}.
\end{cases}
\end{equation}
What is the function~$f$?
(Compare to exercise~\ref{ex:1.9}.)
\end{ex}

\begin{ex}
\label{ex:2d-hd}
The most typical X-ray imaging method is computerized tomography (CT), where a three-dimensional image (of the attenuation function) is reconstructed slice by slice.

If we can show that the X-ray transform is injective in two dimensions, then it follows that it will also be injective in higher dimensions.
Explain why this is so.
\end{ex}

\qa

\section{The Fourier series}
\label{sec:fs}

\subsection{Introduction}

Consider the function series
\begin{equation}
\label{eq:fs-real}
f(x)
=
b_0
+
\sum_{k=1}^\infty (b_k\cos(kx)+c_k\sin(kx)).
\end{equation}
Whether or not the series converges and in which sense depends on the sequences of coefficients~$(b_k)_{k=0}^\infty$ and~$(c_k)_{k=1}^\infty$.
It is quite obvious that if the series defines a reasonable function, then it will be periodic with period~$2\pi$.

The surprise is that every~$2\pi$-periodic function can be written as a series like this, and that the coefficient sequences are unique.
The regularity of the function and the mode of convergence depends on how fast (if at all) $b_k,c_k\to0$ as $k\to\infty$.

Having two coefficient series like above is quite awkward for a number of reasons.
It is far more convenient to study the series
\begin{equation}
\label{eq:fs-complex}
f(x)
=
\sum_{k\in\Z}a_ke^{ikx}
\end{equation}
with complex coefficients~$a_k$.
Even if the function~$f$ is real-valued, complex coefficients are needed, so the whole theory is best built over~$\C$.
The definition of the X-ray transform can be easily extended from real functions to complex ones; see definition~\ref{def:xrt}.

\begin{ex}
Compare the series in~\eqref{eq:fs-real} and~\eqref{eq:fs-complex}.
Write either $e^{it}=\cos(t)+i\sin(t)$ or $\cos(t)=\frac12(e^{it}+e^{-it})$ and $\sin(t)=\frac1{2i}(e^{it}-e^{-it})$, and compare the two representations term by term.
(No need to justify yet why it is enough to compare the terms.)
Express each coefficient~$a_k$ in terms of~$b_k$ and~$c_k$, and vice versa.
\end{ex}

\begin{ex}
\label{ex:1d-periodic-quotient}
Define the equivalence relation~$\sim$ on~$\R$ by declaring $x\sim y$ whenever $\frac1{2\pi}(x-y)\in\Z$.
Explain briefly why this is an equivalence relation.
(An equivalence relation has three properties: symmetry ($x\sim y\iff y\sim x$), transitivity ($x\sim y\sim z\implies x\sim z$) and reflexivity ($x\sim x$ for all~$x$.)
We define the quotient $\R/2\pi\Z$ as the set of equivalence classes.
Explain how functions $\R/2\pi\Z\to\C$ correspond uniquely to $2\pi$-periodic functions $\R\to\C$.

The equivalence class of $x\in\R$ is $[x]=\{y\in\R;x\sim y\}$, and in this case it can well be written as $x+2\pi\Z=\{x+2\pi k;k\in\Z\}$ too.
The quotient set is often denoted by $\R/{\sim}$, but we use the notation $\R/2\pi\Z$ here instead to remind of the specific relation we have.
Two equivalence classes are either disjoint or equal, so the different equivalence classes partition~$\R$.
By a quotient space we will always mean the set of all different equivalence classes.
It may help with this exercise to first argue why $f\colon\R\to\C$ being $2\pi$-periodic is equivalent with it being constant in each equivalence class.
Figure~\ref{fig:1d-fundamental-domain} may be of help.
\end{ex}

\begin{figure}[t]
    \centering
    \includegraphics[scale=0.5,trim={3cm 0 3cm 0},clip,page=4]{xrt-figs1c.pdf}
    \caption{A \sini{periodic function} on the real line. The behavior of the function is fully captured by its values in \puna{the fundamental domain}, an interval whose length equals the period of the function. There are many options for the choice of the fundamental domain, but for $2\pi$-periodic function we may conveniently take it to be $[0,2\pi)$. The two endpoints of the fundamental domain are naturally identified with each other.}
    \label{fig:1d-fundamental-domain}
\end{figure}

\begin{figure}[t]
    \centering
    \includegraphics[scale=0.4,trim={3cm 0 3cm 0},clip,page=6]{xrt-figs1c.pdf}
    \caption{Different angles correspond to the same direction on the unit circle~$\Sphere^1$. Thus the numbers $\alpha,\beta,\gamma\in\R$ belong to the same equivalence class in $\R/2\pi\Z$.}
    \label{fig:angles-mod-2pi}
\end{figure}

In fact, more is true than implied by the previous exercise.
The quotient $\R/2\pi\Z$ inherits a lot of structure from~$\R$: topology, the structure of a smooth manifold, measure, various function spaces\dots

We will take much of Fourier analysis as a given fact.
More details can be found in any book or course focusing on Fourier analysis.
We will review some key results needed to successfully and understandingly apply Fourier tools to X-ray tomography.

\subsection{Fourier transform and inverse Fourier transform on a circle}

We will freely identify a $2\pi$-periodic function on $\R$, a function on $\R/2\pi\Z$, and a function on the fundamental domain $[0,2\pi)$.
See figure~\ref{fig:1d-fundamental-domain} and exercise~\ref{ex:1d-periodic-quotient}.
The identification of the circle~$\Sphere^1$ and the quotient $\R/2\pi\Z$ is illustrated in figure~\ref{fig:angles-mod-2pi}.

Consider the space $L^2(\R/2\pi\Z)$ of measurable $2\pi$-periodic functions $f\colon\R\to\C$ that satisfy
\begin{equation}
\int_0^{2\pi}\abs{f(x)}^2\dd x
<
\infty.
\end{equation}
It is a complex Hilbert space with the inner product
\begin{equation}
\ip{f}{g}
=
\int_0^{2\pi}\overline{f(x)}g(x)\dd x.
\end{equation}
We defined $L^2(\R/2\pi\Z)$ to be a space of functions $\R\to\C$, not $\R/2\pi\Z\to\C$.
However, due to periodicity we can regard the functions in this space as functions on the quotient $\R/2\pi\Z$.
We will denote this quotient later by $\T^1$.

\begin{ex}
Recall the space~$L^2(0,2\pi)$ of square integrable Lebesgue measurable functions $(0,2\pi)\to\C$.
This is a Hilbert space, and it is naturally isomorphic to $L^2(\R/2\pi\Z)$.
Give the natural isomorphisms in both directions.
Are they isometric?
(The fact that they are isomorphic follows from the fact that they are both separable infinite-dimensional complex Hilbert spaces, but there is something far simpler and more natural here.)
\end{ex}

Let us denote by~$\ell^2(\Z)$ the space of ``sequences'' (functions) $a\colon\Z\to\C$ with $\sum_{k\in\Z}\abs{a_k}^2<\infty$.
This, too, is a Hilbert space.
For convenience, we equip it with the norm
\begin{equation}
\aabs{a}^2
=
2\pi
\sum_{k\in\Z}\abs{a_k}^2
\end{equation}
and the corresponding inner product\footnote{The inner product is left implicit, and the reader is encouraged to figure out what the inner product should be. One can of course use the polar formula to find the inner product from the norm, but in a simple case like this one can see the correct inner product by eye.}.

\begin{definition}
\label{def:1d-ft}
The Fourier transform of a $2\pi$-periodic function or distribution expressed as the Fourier series~\eqref{eq:fs-complex} takes the function~$f$ into the sequence~$(a_k)_{k\in\Z}$ of Fourier coefficients.
The inverse Fourier transform takes the sequence back to the function or distribution.
In symbols, $\ft f=a$ and $\ft^{-1}a=f$.
\end{definition}

The definition above is purposely vague, just like definition~\ref{def:xrt} for the X-ray transform.
It describes the overall idea of the Fourier transform and its inverse in the present context.
The same definition can be used for a large number of different function spaces.
Observe that the Fourier transform and its inverse are linear operators.
The Fourier transform of functions on the whole line is a different animal, and we shall greet it later.

A central result in Fourier analysis is that the Fourier transform is well-defined and the inverse exists.
Even more is true:

\begin{theorem}
\label{thm:1d-fs}
The Fourier transform on $\R/2\pi\Z$ is a unitary isometry $\ft\colon L^2(\R/2\pi\Z)\to\ell^2(\Z)$, given by
\begin{equation}
(\ft f)(k)
=
\frac1{2\pi}\int_0^{2\pi}f(x)e^{-ikx}\dd x,
\end{equation}
which is well defined as a Lebesgue integral.
The inverse Fourier transform $\ft^{-1}\colon\ell^2(\Z)\to L^2(\R/2\pi\Z)$ is also unitary and isometric, and is given by
\begin{equation}
(\ft^{-1}a)(x)
=
\sum_{k\in\Z}a_ke^{ikx},
\end{equation}
where the series of functions converges in $L^2(\R/2\pi\Z)$.
\end{theorem}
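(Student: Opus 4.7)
The statement bundles three facts: the integral formula for $\ft$ is well defined on $L^2(\R/2\pi\Z)$, the maps $\ft$ and $\ft^{-1}$ are mutually inverse isometric bijections, and the series for $\ft^{-1}a$ converges in $L^2$. The natural route is through the theory of orthonormal bases in a Hilbert space: I would check that the family $e_k(x)=e^{ikx}$, $k\in\Z$, is an orthogonal system with $\aabs{e_k}^2=2\pi$, hence that $\{(2\pi)^{-1/2}e_k\}$ is orthonormal, and then argue that this set is complete. Once completeness is in hand, all assertions of the theorem fall out of standard Hilbert-space facts (Bessel, Parseval, and the abstract expansion in an orthonormal basis).

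\textbf{Orthogonality and one-sided boundedness.} First I would compute
\begin{equation*}
\ip{e_j}{e_k}=\int_0^{2\pi}e^{i(k-j)x}\dd x=2\pi\delta_{jk},
\end{equation*}
which gives orthogonality and the stated norm of each $e_k$. Next, Cauchy--Schwarz on the bounded interval $(0,2\pi)$ makes $(\ft f)(k)=\frac1{2\pi}\int_0^{2\pi}f(x)e^{-ikx}\dd x$ a well-defined Lebesgue integral for every $f\in L^2(\R/2\pi\Z)$. Bessel's inequality applied to the orthonormal system $\{(2\pi)^{-1/2}e_k\}$ gives
\begin{equation*}
2\pi\sum_{k\in\Z}\abs{(\ft f)(k)}^2\leq\aabs{f}^2,
\end{equation*}
so $\ft\colon L^2(\R/2\pi\Z)\to\ell^2(\Z)$ is a well-defined contraction in the norm convention of the statement, and its image contains every finitely supported sequence.

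\textbf{The hard part: completeness.} The main obstacle is showing that the linear span of $\{e_k\}_{k\in\Z}$ (the trigonometric polynomials) is dense in $L^2(\R/2\pi\Z)$. I would split this into two classical pieces. First, continuous $2\pi$-periodic functions are dense in $L^2(\R/2\pi\Z)$ by a standard approximation argument. Second, trigonometric polynomials are uniformly dense in $C(\R/2\pi\Z)$; since $\aabs{\cdot}_{L^2}\leq\sqrt{2\pi}\aabs{\cdot}_\infty$ on this space, uniform density transfers to $L^2$-density. For the uniform density I would pick one of two standard routes. One option is the complex Stone--Weierstrass theorem on the compact space $\R/2\pi\Z$ applied to the trigonometric polynomials, which form a self-adjoint unital subalgebra separating points. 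The more hands-on option is Fej\'er's theorem: the Ces\`aro means of the Fourier series of a continuous periodic function converge to it uniformly, via convolution with the nonnegative Fej\'er kernel. Either route closes the gap, and Fej\'er is arguably more self-contained.

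\textbf{Conclusion.} Once $\{(2\pi)^{-1/2}e_k\}$ is a complete orthonormal system, Parseval's identity upgrades the Bessel inequality above to equality, so $\ft$ is an isometric linear injection in the declared norms. The abstract expansion in an orthonormal basis then reads $f=\sum_{k\in\Z}(\ft f)(k)e_k$ with convergence in $L^2(\R/2\pi\Z)$, which is the inversion formula. Conversely, given $a\in\ell^2(\Z)$, orthogonality of the $e_k$ and the $\ell^2$-summability of $a$ make the partial sums $\sum_{\abs{k}\leq N}a_ke_k$ Cauchy in $L^2(\R/2\pi\Z)$, so their limit defines an element whose Fourier coefficients, computed by the integral formula, are exactly $a$. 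This exhibits $\ft^{-1}$ as the two-sided inverse of $\ft$, and the same Parseval computation shows that $\ft^{-1}$ is also a unitary isometry.
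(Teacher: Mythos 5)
The paper does not prove this theorem at all: immediately after the statement it says ``This theorem will not be proven on this course,'' and merely remarks that the result is equivalent to the functions $x\mapsto\frac1{\sqrt{2\pi}}e^{ikx}$ forming an orthonormal Hilbert basis of $L^2(\R/2\pi\Z)$. Your proposal is a correct and complete outline of exactly that standard argument, and it is consistent with the paper's framing: the orthogonality computation and Cauchy--Schwarz handle well-definedness, Bessel gives boundedness in the paper's weighted $\ell^2$ norm, and the only genuinely substantive step --- density of trigonometric polynomials, via Fej\'er or Stone--Weierstrass --- is correctly identified and correctly reduced to $L^2$ via the sup-norm bound. The final passage from completeness to Parseval, the basis expansion, and surjectivity of $\ft$ via Cauchy partial sums for an arbitrary $a\in\ell^2(\Z)$ is the standard Hilbert-space bookkeeping and is sound. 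There is nothing in the paper to compare against beyond its one-line reformulation, which your plan matches.
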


This theorem will not be proven on this course.
The theorem can be rephrased as the functions $x\mapsto\frac1{\sqrt{2\pi}}e^{ikx}$, $k\in\Z$, being an orthonormal Hilbert basis for $L^2(\R/2\pi\Z)$.
In general, a Hilbert space is isometric to the~$\ell^2$ space over the index set of a Hilbert basis:
If $(v_i)_{i\in I}$ is an orthonormal basis (indexed by some set~$I$) for a Hilbert space~$H$, then $H\approx\ell^2(I)$ by mapping a vector to its coefficient sequence indexed by the set~$I$.
Now $I=\Z$ and soon we will have $I=\Z^n$.

Every~$L^2$ function $f\colon\R/2\pi\Z\to\C$ can be written uniquely as a series
\begin{equation}
f(x)
=
\sum_{k\in\Z}\ft f(k)e^{ikx}.
\end{equation}
This series is the Fourier series.
Sometimes~$\ft f$ is denoted by~$\hat f$.

The elements of a sequence are typically denoted as~$a_k$ instead of~$a(k)$, but in Fourier analysis it is customary and convenient to write~$\ft f(k)$ or~$\hat f(k)$ instead of~$\ft f_k$ or~$\hat f_k$.

\begin{ex}
Recall definitions from an earlier course or some other source.
What does it mean in formulas (involving sums and integrals) that the Fourier transform is isometric and unitary?
\end{ex}

One may wonder why the Fourier transform of a $2\pi$-periodic function $\R\to\C$ is a function on~$\Z$, not on~$\R$.
This has nothing to do with the specific problem, it is a mathematical property.
A function cannot be $2\pi$-periodic unless all frequencies are integers.
To make this statement more rigorous, one can show that the Fourier transform (in the sense of whole~$\R$, not $\T^1=\R/2\pi\Z$) of a periodic function is a distribution supported on the lattice~$2\pi\Z$.
The same is true in higher dimensions as well.
Another way to see this will come in section~\ref{sec:ft} when we discuss the Fourier transform in greater generality.
The fact that only discrete frequencies are possible is not obvious at first.
It is a key result in Fourier analysis that is seldom stated explicitly.

\subsection{Multidimensional Fourier series}

\begin{figure}[t]
    \centering
    \includegraphics[scale=0.3,trim={0cm 0 0cm 0},clip]{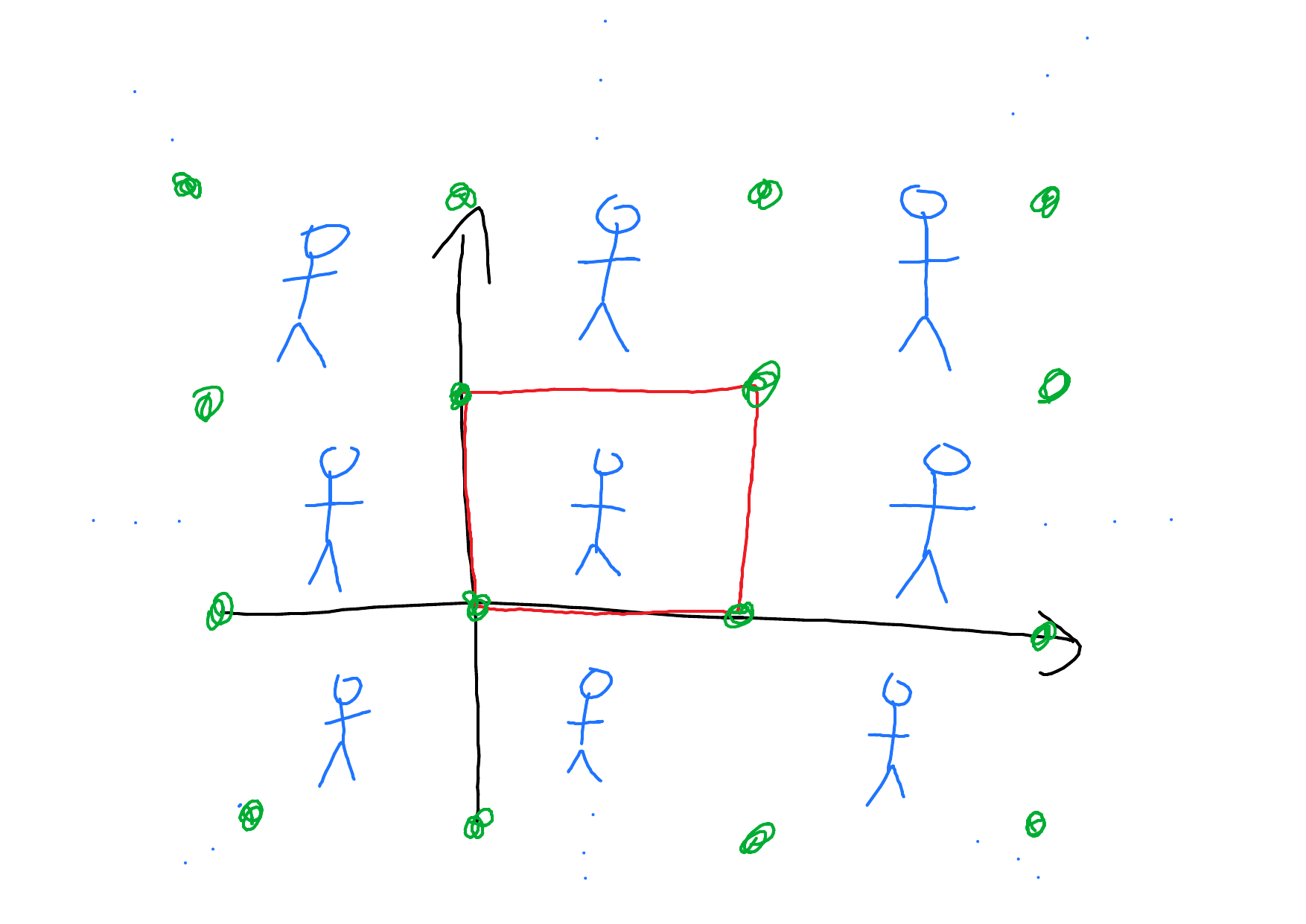}
    \caption{A \sini{periodic function} on the plane. Periodicity is described by \viher{the lattice $2\pi\Z^n$}. Now \puna{the fundamental domain} is a rectangle. The opposite sides of the rectangle can be identified with each other.}
    \label{fig:2d-fundamental-domain}
\end{figure}

In the previous section we considered Fourier series in one dimension.
The theory is very, very similar in higher dimensions, and so the two-dimensional figure~\ref{fig:2d-fundamental-domain} is very similar to its one-dimensional counterpart in figure~\ref{fig:1d-fundamental-domain}.

In higher dimensions, one studies functions $\R^n\to\C$ which are $2\pi$-periodic in all~$n$ real variables.
Notice that the space of such functions is not rotation invariant; the coordinate axes give~$n$ preferred directions.
These preferred directions are perhaps more apparent in the lattice
\begin{equation}
2\pi\Z^n
=
\{x\in\R^n;x_i/2\pi\in\Z\text{ for all }i\}.
\end{equation}
As we did above in one dimension, we may quotient the space~$\R^n$ with the lattice~$2\pi\Z^n$ to form $\R^n/2\pi\Z^n$.

\begin{ex}
What is the equivalence relation in~$\R^n$ corresponding to the lattice?
What are the equivalence classes?
\end{ex}

The quotient space (not a quotient \emph{vector} space) $\R/2\pi\Z$ is homeomorphic (in fact isometric) to the circle~$\Sphere^1$.
The quotient space $\R^n/2\pi\Z^n$ is the same as $(\R/2\pi\Z)^n$ or~$(\Sphere^1)^n$, but not~$\Sphere^n$.
We have~$n$ coordinates, each considered modulo~$2\pi$.
The topological space $\T^n\coloneqq\R^n/2\pi\Z^n$ is the $n$-dimensional torus.
The most famous torus is the two-dimensional one, and the one-dimensional torus is often called simply the circle.
Observe that the sphere $\Sphere^n$ is homeomorphic to the torus $\T^n$ if only if $n=1$.
(Proving this is unimportant for our needs, but the interested reader may consider looking into first homotopy or homology groups.)

For the differential geometrically oriented:
The lattice acts isometrically on the space~$\R^n$, so the quotients inherits the (Euclidean) Riemannian metric.
The torus with this metric is locally isometric to~$\R^n$, and is called the flat torus.

\begin{ex}
\label{ex:torus-as-quotient-group}
The Euclidean space~$\R^n$ is an additive group and~$2\pi\Z^n$ is a subgroup.
Why is the quotient group $\R^n/2\pi\Z^n$ well defined?
(That is, why is the subgroup normal?)
How does this quotient group correspond to the quotient space $\R^n/2\pi\Z^n$ described above?
Describe the group operation.
\end{ex}

A function $\R^n/2\pi\Z^n\to\C$ --- or, equivalently, a function on~$\R^n$ with period~$2\pi$ in each variable --- is written as a Fourier series as follows:
\begin{equation}
\label{eq:hd-fs}
f(x)
=
\sum_{k\in\Z^n}a_ke^{ik\cdot x}.
\end{equation}
Let us define the Fourier series similarly to what we did in definition~\ref{def:1d-ft} in one dimension:

\begin{definition}
\label{def:hd-ft}
The Fourier transform of a function or distribution on the torus~$\T^n$ expressed as the Fourier series~\eqref{eq:hd-fs} takes the function~$f$ into the sequence~$(a_k)_{k\in\Z^n}$ of Fourier coefficients.
The inverse Fourier transform takes the sequence back to the function or distribution.
In symbols, $\ft f=a$ and $\ft^{-1}a=f$.
\end{definition}

The spaces $L^2(\T^n)$ and~$\ell^2(\Z^n)$ are defined analogously to the case $n=1$ discussed above.
The norm on the latter space is
\begin{equation}
\aabs{a}^2
=
(2\pi)^n\sum_{k\in\Z^n}\abs{a_k}^2.
\end{equation}
Using these spaces, we have the following generalization of theorem~\ref{thm:1d-fs}:

\begin{theorem}
\label{thm:hd-fs}
The Fourier transform on the torus~$\T^n$ is a unitary isometry $\ft\colon L^2(\T^n)\to\ell^2(\Z^n)$, given by
\begin{equation}
(\ft f)(k)
=
\frac1{(2\pi)^n}\int_{[0,2\pi]^n}f(x)e^{-ik\cdot x}\dd x,
\end{equation}
which is well defined as a Lebesgue integral.
The inverse Fourier transform $\ft^{-1}\colon\ell^2(\Z^n)\to L^2(\T^n)$ is also unitary and isometric, and is given by
\begin{equation}
(\ft^{-1}a)(x)
=
\sum_{k\in\Z^n}a_ke^{ik\cdot x},
\end{equation}
where the series of functions converges in $L^2(\T^n)$.
\end{theorem}

\begin{ex}
When defining the Fourier transform on~$\T^1$ and~$\T^n$, we made use of the exponential functions~$e^{ik\cdot x}$.
Show that if $k\in\Z^n$ and $x\in\T^n=\R^n/2\pi\Z^n$, the value of $e^{ik\cdot x}$ does not depend on the representative of~$x$ in~$\R^n$.
This means that the exponential function~$e^{ik\cdot x}$ is indeed well defined.
Is the exponent $k\cdot x$ well defined, too?
\end{ex}

\begin{ex}
Let us denote $e_k(x)=e^{ik\cdot x}$.
For any $k\in\Z^n$ we have $e_k\in L^2(\R^n/2\pi\Z^n)$.
Using the given inner product, prove that
\begin{equation}
\ip{e_k}{e_m}
=
c\delta_{km},
\end{equation}
where~$\delta_{km}$ is the Kronecker delta and~$c$ is a constant.
What is the constant~$c$?
Do not appeal to theorem~\ref{thm:hd-fs}, but calculate by hand.
\end{ex}

The general idea is to show that $\xrt f=0\implies\ft f=0\implies f=0$.
That is, the X-ray transform of~$f$ is easier to connect to the Fourier transform~$\ft f$ than~$f$ itself.

The Fourier series can be defined on other spaces, for example non-flat tori, using the eigenfunctions of the Laplace operator.
It will be considerably more clumsy and it will not work so nicely together with the X-ray transform.
Fourier analysis tends to be most convenient when one has enough symmetry.

\begin{ex}
One aspect of the basis functions used in the Fourier series is that they are eigenfunctions of the Laplace operator.
What is the eigenvalue of the function $x\mapsto e^{ik\cdot x}$?
\end{ex}

\begin{bex}
Show that the number~$7$ cannot be written as the sum of three integer squares.
Recall Lagrange's four-square theorem.

Using these tools, show that the spectrum (the set of eigenvalues) of the Laplace operator $\Delta=\sum_{k=1}^n\partial_k^2$ on the torus~$\T^n$ is $-\N$ if and only if $n\geq4$.
\end{bex}

\qa

\section{X-ray tomography on a torus}
\label{sec:torus}

In this section we will give our first injectivity proof based on Fourier series on~$\T^n$.
Two ways to see the torus~$\T^2$ are given in figures~\ref{fig:2d-torus-donut} and~\ref{fig:2d-torus-square}.

\begin{figure}[t]
    \centering
    \includegraphics[scale=0.4,trim={3cm 0 3cm 0},clip,page=7]{xrt-figs1c.pdf}
    \caption{The two-dimensional torus $\T^2=\Sphere^1\times\Sphere^1$ can be embedded in~$\R^3$ as the surface of a donut. \sini{A point} on the torus is described by two angles (from each copy of~$\Sphere^1$), \viher{one choosing the circle} and \puna{one choosing the point on the circle}. This embedding is not isometric, so geodesics as we know them do not look straight in this picture.}
    \label{fig:2d-torus-donut}
\end{figure}

\begin{figure}[t]
    \centering
    \includegraphics[scale=0.4,trim={3cm 0 3cm 0},clip,page=8]{xrt-figs1c.pdf}
    \caption{The two-dimensional torus~$\T^2$ can be understood as the fundamental domain of figure~\ref{fig:2d-fundamental-domain}. We identify \sini{the top edge with the bottom edge} and \viher{the left edge with the right edge}, preserving orientation (indicated by arrows). Some of \puna{the points} have multiple representations due to the identifications. If you imagine that the identified edges are physically glued together (and the paper is suitably stretched), it should be clear how this square picture corresponds to figure~\ref{fig:2d-torus-donut}. Geodesics look far cleaner in this square picture.}
    \label{fig:2d-torus-square}
\end{figure}

\subsection{Geodesics on a torus}

The analogue of a straight line in differential geometry is a geodesic.
Similarly to the problem we set out to study, one can ask whether a function on a manifold is determined by its integrals over geodesics.
This is an active field of study, but beyond the scope of this course.

However, we will study this problem now on the flat torus $\T^n=\R^n/2\pi\Z^n$.
The reason is that this provides one of the simplest\footnote{It is simple in terms of weight of technical tools if one starts from scratch, but not necessarily in terms of conceptual simplicity. The latter is up to the reader's taste.} proofs of the injectivity of the X-ray transform in a bounded Euclidean domain.

Geodesics, like any curves, can be regarded as subsets of the space or as functions from an interval to the space.
As a set, a geodesic in~$\R^n$ is simply a straight line.
As a function, a geodesic can be described as $\gamma\colon[0,1]\to\R^n$, $\gamma(t)=x+tv$.
The velocity $v\in\R^n$ can be any non-zero vector; it will be convenient not to assume unit speed in this section.
A geodesic is a curve with constant velocity, and this makes sense on far more general manifolds than~$\R^n$ and~$\T^n$.

Here we chose to parametrize the geodesic by $[0,1]$, and we have therefore described a geodesic between two points ($x$ and $x+v$).
Another option is to replace the interval with~$\R$; this leads to what is called a maximal geodesic.
In our Euclidean X-ray tomography problem we consider the integrals of an unknown function over all geodesics through a given domain.
It is irrelevant whether the geodesics are maximal or between two points, as long as the two points are outside (or at the boundary of) the domain.

Let us then turn to geodesics on a torus.
Let $q\colon\R^n\to\T^n$ be the quotient map that takes a point to its equivalence class.
One can write it as $q(x)=x+2\pi\Z^n\subset\R^n$.
This formula is seldom very useful in practice, but perhaps it helps get a hold of the idea.

A geodesic between two points on the torus~$\T^n$ is simple to describe: we may compose a geodesic on~$\R^n$ with the quotient map.
We take $x\in\R^n$ and $v\in\R^n\setminus\{0\}$ and define $\gamma\colon[0,1]\to\T^n$ by $\gamma(t)=q(x+tv)$.
Geodesics on~$\T^n$ are projections of geodesics on~$\R^n$ through~$q$, and this is true for any Riemannian covering map too.
We will be interested in maximal geodesics that do not terminate in either direction, which corresponds to replacing $[0,1]$ above by~$\R$.


On a torus, there is an interesting new class of maximal geodesics: closed geodesics, also known as periodic geodesics.
The simplest example of a periodic geodesic is
\begin{equation}
\begin{split}
&\R\to\T^n,
\\
&t\mapsto (2\pi t,0,\dots,0),
\end{split}
\end{equation}
which has period~$1$.
Geodesics $\R\to\T^n$ with period~$1$ can be naturally identified with geodesics $\gamma\colon[0,1]\to\T^n$ for which $\gamma(0)=\gamma(1)$.
The structure of the torus ensures that if the endpoints agree, then the directions are also the same.

\begin{figure}[t]
    \centering
    \includegraphics[scale=0.4,trim={3cm 0 3cm 0},clip,page=9]{xrt-figs1c.pdf}
    \caption{Geodesics on a torus are best understood through figure~\ref{fig:2d-torus-square}. Given an \viher{initial point and direction $(x,v)$}, we follow \sini{the geodesic~$\gamma(t)$ of~\eqref{eq:vv1}} as if it were a straight line in the plane, taking into account \puna{identifications (realized as jumps)} on the boundary. It depends on the initial velocity~$v$ whether the geodesic meets itself again. Note how a finitely long geodesic segment on~$\T^n$ is --- upon identifying the torus with the fundamental domain --- a finite union of line segments in~$\R^n$. This will be useful for lemma~\ref{lma:torus-to-Rn}.}
    \label{fig:2d-torus-geodesic}
\end{figure}

The most convenient way to describe geodesics for our purposes is to take two parameters $x\in\T^n$ and $v\in\R^n$, and let the corresponding geodesic $[0,1]\to\T^n$ be
\begin{equation}
\label{eq:vv1}
\gamma(t)
=
q(x'+tv),
\end{equation}
where $x'\in\R^n$ is any point so that $q(x')=x$.
This is illustrated in figure~\ref{fig:2d-torus-geodesic}.
Equivalently, we may take 
\begin{equation}
\label{eq:vv2}
\gamma(t)
=
x+q(tv),
\end{equation}
where ``$+$'' is the addition on~$\T^n$ --- which is naturally an abelian group.\footnote{The definition of the induced group operation on a quotient group may be familiar. One way to see it is that it equips the quotient~$\T^n$ with such an addition that the quotient map $q\colon\R^n\to\T^n$ is a group homomorphism. Recall exercise~\ref{ex:torus-as-quotient-group}.}
We shall write this geodesic simply as
\begin{equation}
\gamma(t)
=
x+tv\in\T^n,
\end{equation}
where the quotient is left implicit.
(The geometrically oriented may prefer to read $x+tv$ as $\exp_x(tv)$.)
See figure~\ref{fig:2d-torus-geodesic} for an illustration.

All geodesics on a torus are of this form.
This is because the quotient map $q\colon\R^n\to\T^n$ is a surjective local isometry (a Riemannian covering map) and isometries preserve geodesics.
It is crucial that the torus is flat, which is most apparent in figures~\ref{fig:2d-torus-square} and~\ref{fig:2d-torus-geodesic}.
If one uses another metric (such as the donut embedded in~$\R^3$ of figure~\ref{fig:2d-torus-donut}), the geodesics will be different and there will be less symmetry.

\begin{ex}
Consider the geodesic described in~\eqref{eq:vv1} and~\eqref{eq:vv2} above.
Show that the endpoints coincide if and only if $v\in2\pi\Z^n$.
\end{ex}

\begin{ex}
\label{ex:t1-geodesic}
Explain how a geodesic with velocity $v\in2\pi\Z^n$ can be regarded as a function $\R/\Z\to\T^n$.
\end{ex}

In the X-ray transform on a torus we will only integrate over periodic geodesics.
The reason for this is two-fold.
First, periodic geodesics are convenient and, as it turns out, sufficient.
Second, the integrals are ill-defined over a non-periodic geodesic.

By exercise~\ref{ex:t1-geodesic} a periodic geodesic is a function $\R/\Z\to\T^n$, and it is easy to integrate a continuous function over the compact set $\R/\Z$.
However, when there is no periodicity, one would have to integrate over all of~$\R$, and the resulting integral typically does not exist (as a finite number).

\subsection{Injectivity from a torus to a Euclidean space}

For any $v\in2\pi\Z^n\setminus\{0\}$, $x\in\T^n$ and $f\in C(\T^n)$, we write
\begin{equation}
\xrt_vf(x)
=
\int_0^1f(x+tv)\dd t.
\end{equation}
If~$v$ is fixed, this defines an operator
\begin{equation}
\xrt_v\colon C(\T^n)\to C(\T^n).
\end{equation}
For us the key property is that~$\xrt_v$ is linear, but it does indeed map continuous functions to continuous functions.
It has other properties as well:
$\aabs{\xrt_v}=1$
and
$\xrt_v^2=\xrt_v$.
It is also a continuous self-adjoint operator $L^2(\T^n)\to L^2(\T^n)$.

\begin{definition}
\label{def:xrt-torus}
We call the family of operators~$\xrt_v$ with $v\in2\pi\Z^n\setminus\{0\}$ the X-ray transform on the torus~$\T^n$.
\end{definition}

This point of view is convenient here, although it would be possible to realize the X-ray transform as a single operator as well.
In the usual view~$\xrt$ is not a self-adjoint operator and that leads us to consider its normal operator later in this course.

The torus has no boundary, so it is what is called a closed manifold.
It is worth noticing that in a setting like this the geodesics do not go through the domain from boundary to boundary as in our original problem.
All geodesics are trapped inside the manifold, but fortunately in an orderly fashion.

Now consider a function $f\in C_B\subset C(\R^n)$.
The function~$f$ is supported in the closed unit ball~$\bar B$, so we can extend it periodically to a function~$\tilde f$ on~$\R^n$ so that $\tilde f=f$ on $(-\pi,\pi)^n$.
Observe that $\bar B\subset(-\pi,\pi)^n$.
This periodization is illustrated in figure~\ref{fig:periodization} and the reason for the support constraint in figure~\ref{fig:periodic-failure}.

\begin{figure}[t]
    \centering
    \includegraphics[scale=0.4,trim={3cm 0 3cm 0},clip,page=10]{xrt-figs1c.pdf}
    \caption{The original function~$f$ is supported in \puna{the ball~$B$}. It is then made periodic, giving rise to \sini{the function~$\tilde f$}. We may think of~$\tilde f$ as a function on \viher{the fundamental domain~$[-\pi,\pi)^n$}. If we think of the fundamental domain as a subset of the plane, we can naturally identify $f=\tilde f$.}
    \label{fig:periodization}
\end{figure}

\begin{figure}[t]
    \centering
    \includegraphics[scale=0.4,trim={3cm 0 3cm 0},clip,page=11]{xrt-figs1c.pdf}
    \caption{If \sini{the support of the function} does not fit within \viher{the fundamental domain}, there may well be \puna{regions where there are more than one candidates for the value of~$\tilde f$}. As scaling does not matter, we need that the support of~$f$ fits within a square --- meaning that it is compactly supported.}
    \label{fig:periodic-failure}
\end{figure}

\begin{ex}
Give a formula for~$\tilde f$ in terms of~$f$.
\end{ex}

Since it is periodic, the function~$\tilde f$ can be regarded as a function on the torus~$\T^n$.

\begin{lemma}
\label{lma:torus-to-Rn}
The X-ray transform of $\tilde f\in C(\T^n)$ is uniquely determined by the X-ray transform of $f\in C_B$.
\end{lemma}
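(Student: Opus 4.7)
The plan is to realize $\xrt_v \tilde f(x)$ as an explicit finite linear combination of values of $\xrt f$. The geometric picture is this: the closed geodesic on $\T^n$ in direction $v$ lifts to a single segment of length $\abs{v}$ in $\R^n$; when we unfold the periodic extension $\tilde f$ into translates of $f$, these segments, as $k$ ranges over $\Z^n$, glue together into full Euclidean lines along which we are integrating $f$.

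First I would write the periodic extension explicitly. Since $f$ is supported in $\bar B \subset (-\pi,\pi)^n$, the translates $2\pi k + \bar B$, $k \in \Z^n$, are pairwise disjoint, so
\begin{equation*}
\tilde f(y) = \sum_{k \in \Z^n} f(y - 2\pi k)
\end{equation*}
with at most one term nonzero at each $y$. Fix $x \in \T^n$, a lift $x' \in \R^n$, and $v \in 2\pi\Z^n \setminus 0$; write $v = 2\pi m$ with $m \in \Z^n \setminus 0$. Substituting into the definition of $\xrt_v$ and swapping sum with integral (justified because only finitely many $k$ place the segment $\{x' + tv - 2\pi k : t \in [0,1]\}$ near $\bar B$) yields
\begin{equation*}
\xrt_v \tilde f(x) = \sum_{k \in \Z^n} \int_0^1 f(x' - 2\pi k + tv) \dd t.
\end{equation*}

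Next I would regroup the sum by cosets of the cyclic subgroup $\Z m \subseteq \Z^n$. For a fixed representative $k$, summing over translates $k + jm$, $j \in \Z$, and substituting $s = t - j$ gives
\begin{equation*}
\sum_{j \in \Z} \int_0^1 f(x' - 2\pi k - jv + tv) \dd t = \int_\R f(x' - 2\pi k + sv) \dd s = \frac{1}{\abs{v}} \xrt f(x' - 2\pi k, v/\abs{v}).
\end{equation*}
The right-hand side depends only on the coset $[k] \in \Z^n / \Z m$, since replacing $k$ by $k + jm$ shifts the base point by $-jv$, which only reparametrizes the same line. Summing over cosets,
\begin{equation*}
\xrt_v \tilde f(x) = \frac{1}{\abs{v}} \sum_{[k] \in \Z^n / \Z m} \xrt f(x' - 2\pi k, v/\abs{v}),
\end{equation*}
and the sum is finite because only finitely many of these parallel lines meet $\bar B$. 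This displays $\xrt_v \tilde f(x)$ as an explicit function of values of $\xrt f$, proving the lemma.

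The main obstacle is conceptual rather than technical: one has to see that the correct way to partition the lattice sum is by cosets of $\Z m$, since this is exactly what reassembles the unit-length segments produced by periodicity into the full Euclidean lines on which $\xrt f$ is defined. Once that grouping is spotted, the compact support of $f$ renders all convergence and interchange issues immediate, and independence of the lift $x'$ follows automatically by reindexing the coset sum.
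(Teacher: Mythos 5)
Your proof is correct, and it takes a genuinely different route from the paper's. The paper works inside the fundamental cube: it folds the Euclidean picture into $[-\pi,\pi)^n$ via the inverse $\iota$ of the quotient map, shifts the parametrization so the geodesic starts on the cube's boundary, and cuts the integral $\int_0^1 f(\iota(x'+tv))\dd t$ at the finitely many times the folded curve crosses the boundary set $C$; each resulting piece is an integral of $f$ over a chord of the cube, hence (since $\spt f\subset\bar B$ lies inside the cube) a value of $\xrt f$. You go the opposite way: you unfold $\tilde f$ as the locally finite periodization $\sum_k f(\dummy-2\pi k)$ and regroup the lattice sum by cosets of $\Z m$ (where $v=2\pi m$), so that the unit-length segments reassemble into full Euclidean lines; the telescoping of the intervals $[-j,1-j]$ into $\R$ is exactly the step the paper achieves geometrically by cutting at boundary crossings. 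What your approach buys is an explicit closed formula
\begin{equation*}
\xrt_v\tilde f(x)=\frac1{\abs{v}}\sum_{[k]\in\Z^n/\Z m}\xrt f\bigl(x'-2\pi k,\,v/\abs{v}\bigr),
\end{equation*}
where the paper explicitly concedes ``there is no pretty formula''; it also sidesteps the slightly delicate bookkeeping about the curve possibly lying entirely in $C$, the finiteness of the crossing times, and the shift making $\tau=0$. All your interchanges are justified by local finiteness of the translates of $\bar B$, and the well-definedness in $x'$ and in the coset representative is indeed automatic by reindexing, so there is no gap.
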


The idea is simple, but writing it down is awkward.
We will do it anyway, but first in an abridged way.

\begin{proof}[Sketchy proof of lemma~\ref{lma:torus-to-Rn}]
We may think of a function on the unit ball~$\bar B$ to be a function on the torus because the ball is contained in the box-shaped fundamental domain $[-\pi,\pi)^n$.
It is obvious from figure~\ref{fig:2d-torus-geodesic} that a periodic geodesic on the torus is a union of finitely many line segments in the box.
The integrals of~$f$ over these line segments are determined by~$\xrt f$ because the support is contained in the box.
\end{proof}

\begin{proof}[Full proof of lemma~\ref{lma:torus-to-Rn}]
Take any $v\in2\pi\Z^n\setminus\{0\}$ and $x\in\T^n$.
We need to show that $\xrt_v\tilde f(x)$ can be expressed in terms of~$\xrt f$.
Recall that~$\xrt f$ is a function defined on the set of all lines in~$\R^n$.

We first observe that~$\xrt f$ determines the integral of~$f$ over any line segment with endpoints on the boundary of the fundamental domain.
This is simply because by extending the segment to a full line we never hit the support of the function again.

The restriction $q|_{[-\pi,\pi)^n}\colon[-\pi,\pi)^n\to\T^n$ is a bijection.
Let us denote its inverse by~$\iota$.
It satisfies $q\circ\iota=\id_{\T^n}$.

Since~$f$ is supported in~$\bar B$, we may consider it to be a function defined on $[-\pi,\pi)^n$, which we now take to be our fundamental domain.
Then we have $f=\tilde f\circ q$ and $\tilde f=f\circ\iota$.

Consider any $x\in\T^n$ and $v\in2\pi\Z^n\setminus\{0\}$.
Let $\tilde\gamma\colon[0,1]\to[-\pi,\pi)^n$ be the ``curve'' corresponding to the geodesic $t\mapsto\gamma(t)=x+q(tv)$, defined by $\tilde\gamma(t)=\iota(x+q(tv))$.
The image $\tilde\gamma([0,1])$ consists of line segments in~$\R^n$, so it is not strictly a curve.

The relevant sets and maps are collected in this diagram, which fully commutes:
\begin{equation}
\begin{tikzcd}[row sep=huge]
&&&
\viher{[0,1]}
\arrow[dr,"\gamma",color=Green]
\arrow[dl,"\tilde\gamma"',color=Green]
&
\\
\sini{\R^n}
\arrow[drrr,"f"',blue]
\arrow[rrrr,bend left=20,"q",blue]
&&
{[-\pi,\pi)^n}
\arrow[dr,"f"']
\arrow[rr,bend left=10,"q"]
\arrow[ll,"\text{~~~~inclusion}"',color=blue]
&&
\T^n
\arrow[dl,"\tilde f"]
\arrow[ll,bend left=10,"\iota"]
\\
&&& \C &
\end{tikzcd}
\end{equation}
The \sini{blue part} concerns the original function $f\colon\R^n\to\C$ but will be largely ignored as the function~$f$ is supported within the fundamental domain.
The \viher{green part} above spaces is for the geodesic~$\gamma$ and its boxier incarnation~$\tilde\gamma$.
Notice how the original function~$f$ is on the left and the modified function~$\tilde f$ on the right, whereas the original curve~$\gamma$ is on the right and the modified curve~$\tilde\gamma$ is on the left.
Given the nature of the claim of the lemma, this should feel natural.

Let us denote $C=\{x\in[-\pi,\pi)^n;x_i=-\pi\text{ for some }i\}$.
This cone-shaped set~$C$ is the part of the boundary of the half-open cube contained in the cube: $C=[-\pi,\pi)^n\cap\partial([-\pi,\pi)^n)$.
As we will see in exercise~\ref{ex:torus-seam-hit}, the curve~$\tilde\gamma$ meets~$C$ at some point.
Replacing~$x$ with $x+q(sv)$ does not change the value of $\xrt_vf(x)$, as we will see in exercise~\ref{ex:torus-xrt-shift-invariance}.
Thus, we may just as well assume that $\iota(x)\in C$.

If $\tilde\gamma([0,1])$ is contained in~$C$, then it does not meet the support of~$\tilde f$.
Therefore $\tilde f\circ\gamma$ vanishes identically and so $\xrt_vf(x)=0$.
If the curve~$\tilde\gamma$ is not contained in this set, then it meets~$C$ only finitely many times; see exercise~\ref{ex:torus-seam-hit-finite}.
Call the number of times $m+1$ --- or~$m$ if we ignore the double counting of the endpoints.
By the previous considerations one of these times is at $t=0$, and by periodicity also at $t=1$.
Let the other times of hitting~$C$ be $0<t_1<\dots<t_m<1$, and denote $t_0=0$ and $t_{m+1}=1$.
It is possible that $m=0$ and there are no other times than the initial and final one.

We have
\begin{equation}
\begin{split}
\xrt_v\tilde f(x)
&=
\int_0^1\tilde f(x+tv)\dd t
\\&=
\int_0^1f(\iota(x+tv))\dd t
\\&=
\sum_{j=0}^{m}\int_{t_j}^{t_{j+1}}f(\iota(x+tv))\dd t
.
\end{split}
\end{equation}
Now, each $\int_{t_j}^{t_{j+1}}f(\iota(x+tv))\dd t$ is an integral of~$f$ over a straight line joining two boundary points of the cube~$[-\pi,\pi]^n$.
As noted earlier, these integrals are  determined by the X-ray transform $\xrt f$.
Therefore~$\xrt_v\tilde f$ can be written in terms of~$\xrt f$ --- although there is no particularly pretty formula --- and the proof is complete.
\end{proof}

\begin{ex}
Describe the function $\iota\circ q\colon\R^n\to[-\pi,\pi)^n$ in words, formulas, pictures, or a combination thereof.
\end{ex}

\begin{ex}
\label{ex:torus-seam-hit}
Explain why there is $s\in\R$ so that $x+sv\in C$ in the proof above.
That is, justify more carefully why the geodesic must hit the ``boundary''\footnote{This is the boundary of the fundamental domain. The torus itself has no boundary. A more appropriate word than ``boundary'' in the context of this construction of the torus would be ``seam''.}~$C$.
There are many different ways to do this, including the fact that $\abs{v}\geq2\pi$ or that it is impossible to return back to the starting point without changing direction and without jumping back.
\end{ex}

\begin{ex}
\label{ex:torus-seam-hit-finite}
Take any $w,x\in\T^1$ and $v\in2\pi\Z$.
Show that the function $[0,1]\ni t\mapsto x+tv\in\T^1$ either takes the value~$w$ at finitely many times or it only takes the value~$w$.
Then show that in general dimension a geodesic on the torus either stays on the seam $q(C)\subset\T^n$ (as used in the proof of lemma~\ref{lma:torus-to-Rn}) or only meets it finitely many times.
\end{ex}

\begin{ex}
\label{ex:torus-xrt-shift-invariance}
Explain why $\xrt_vg(x)=\xrt_vg(x+sv)$ for any $s\in\R$, $g\in C(\T^n)$, and $v\in2\pi\Z^n$.
\end{ex}

The conclusion of the lemma is important:

\begin{ex}
\label{ex:torus-to-Rn}
Suppose we know that $\xrt_vg=0$ for all $v\in2\pi\Z^n\setminus\{0\}$ implies that the function $g\in C(\T^n)$ has to vanish identically.
Show that if $\xrt f=0$ for some $f\in C_B$, then $f=0$.
\end{ex}

In other words, injectivity of the X-ray transform in the Euclidean space follows from an injectivity result on the torus.
This is our first solution of the inverse problem of X-ray tomography.
The missing step is proving the desired result on the torus.

\subsection{Interplay between the X-ray and Fourier transforms on a torus}

For any fixed $v\in2\pi\Z^n\setminus\{0\}$ and $f\in C(\T^n)$, the X-ray transform~$\xrt_vf$ is a continuous function on the torus~$\T^n$.
Therefore it makes sense to calculate its Fourier transform.
The lemma below is a version of the Fourier slice theorem, which we will see in its more common form in exercise~\ref{ex:og-fourier}.

\begin{lemma}
\label{lma:ft-xrt-torus}
Let $v\in2\pi\Z^n\setminus\{0\}$ and $f\in C(\T^n)$.
Then for every $k\in\Z^n$
\begin{equation}
\ft(\xrt_vf)(k)
=
\begin{cases}
\ft f(k) & \text{when }k\cdot v=0\\
0 & \text{otherwise}.
\end{cases}
\end{equation}
\end{lemma}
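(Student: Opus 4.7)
The plan is to compute $\ft(\xrt_v f)(k)$ directly from its integral definition and reduce it to $\ft f(k)$ times a one‑dimensional integral whose value depends only on whether $k\cdot v$ vanishes.

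First I would write out
\begin{equation*}
\ft(\xrt_v f)(k)
=
\frac{1}{(2\pi)^n}\int_{[0,2\pi]^n}\left(\int_0^1 f(x+tv)\dd t\right)e^{-ik\cdot x}\dd x.
\end{equation*}
Since $f$ is continuous on the compact torus and the inner interval is also compact, the integrand is bounded, so Fubini's theorem lets me swap the order of integration. In the inner $x$-integral I would substitute $y=x+tv$; because translation is a measure-preserving bijection of $\T^n$ (equivalently, the integrand is $2\pi$-periodic in each variable so the translated cube $[0,2\pi]^n+tv$ has the same integral), this produces
\begin{equation*}
\ft(\xrt_v f)(k)
=
\int_0^1 e^{i t\, k\cdot v}\left(\frac{1}{(2\pi)^n}\int_{[0,2\pi]^n} f(y)e^{-ik\cdot y}\dd y\right)\dd t
=
\ft f(k)\int_0^1 e^{it\, k\cdot v}\dd t.
\end{equation*}

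Next I would evaluate the remaining one‑dimensional integral. The key observation is that $v\in 2\pi\Z^n$ and $k\in\Z^n$, so $k\cdot v\in 2\pi\Z$. Write $k\cdot v=2\pi\ell$ with $\ell\in\Z$. If $\ell=0$, i.e.\ $k\cdot v=0$, then $\int_0^1 1\dd t=1$ and we recover $\ft f(k)$. If $\ell\neq 0$, then
\begin{equation*}
\int_0^1 e^{2\pi i\ell t}\dd t
=
\frac{e^{2\pi i\ell}-1}{2\pi i\ell}
=
0,
\end{equation*}
so the Fourier coefficient vanishes. This is exactly the dichotomy claimed in the lemma.

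I do not expect a real obstacle here; the only points that need care are (i) the application of Fubini, which is immediate from continuity and compactness, and (ii) using periodicity to see that translating the cube $[0,2\pi]^n$ by $tv$ does not change the integral — i.e.\ that the change of variables on $\T^n$ is valid. An alternative route, which I would mention as a sanity check, is to plug the $L^2$-convergent Fourier series $f=\sum_m\ft f(m)e_m$ into the definition of $\xrt_v$, interchange sum and integral, and use $\int_0^1 e^{it\, m\cdot v}\dd t=\delta_{m\cdot v,0}$ to conclude that $\xrt_v f=\sum_{m\cdot v=0}\ft f(m)e_m$, from which the Fourier coefficients can be read off at once.
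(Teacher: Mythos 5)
Your proposal is correct and follows essentially the same route as the paper's proof: write out the definition, apply Fubini, substitute $y=x+tv$ using translation invariance of the torus, factor out $\int_0^1 e^{it\,k\cdot v}\dd t$, and evaluate it using $k\cdot v\in2\pi\Z$. The paper leaves the justification of these steps as an exercise, and your remarks on Fubini and the change of variables supply exactly that justification.
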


\begin{proof}
The proof is a mere calculation:
\begin{equation}
\label{eq:vv3}
\begin{split}
\ft(\xrt_vf)(k)
&=
\frac1{(2\pi)^n}
\int_{\T^n} e^{-ik\cdot x} \xrt_vf(x) \dd x
\\&=
\frac1{(2\pi)^n}
\int_{\T^n} e^{-ik\cdot x} \int_0^1 f(x+tv) \dd t \dd x
\\&\stackrel{\text{a}}{=}
\frac1{(2\pi)^n}
\int_0^1\int_{\T^n} e^{-ik\cdot x} f(x+tv) \dd x \dd t
\\&\stackrel{\text{b}}{=}
\frac1{(2\pi)^n}
\int_0^1\int_{\T^n} e^{-ik\cdot (y-tv)} f(y) \dd y \dd t
\\&\stackrel{\text{c}}{=}
\frac1{(2\pi)^n}
\int_{\T^n} e^{-ik\cdot y} f(y) \dd y
\times
\int_0^1e^{i(k\cdot v)t}\dd t
\\&\stackrel{\text{d}}{=}
\ft f(k)
\times
\begin{cases}
1 & \text{when }k\cdot v=0\\
0 & \text{otherwise}.
\end{cases}
\end{split}
\end{equation}
It only remains to justify the steps.
\end{proof}

\begin{ex}
Explain the steps a--d in~\eqref{eq:vv3}.
\end{ex}

We will next show that the X-ray transform is injective.
Bear in mind that the X-ray transform is understood as a family of operators.
Here injectivity means ``collective injectivity''; the individual operators are not injective.

\begin{theorem}
\label{thm:xrt-torus}
Let $f\in C(\T^n)$.
If $\xrt_vf=0$ for all $v\in2\pi\Z^n\setminus\{0\}$, then $f=0$.
\end{theorem}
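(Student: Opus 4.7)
The plan is to run the argument entirely at the level of Fourier coefficients and reduce the theorem to Lemma~\ref{lma:ft-xrt-torus} combined with the uniqueness part of Theorem~\ref{thm:hd-fs}. Since $\T^n$ has finite measure, $C(\T^n)\subset L^2(\T^n)$, so Fourier coefficients are well defined and uniqueness holds. The hypothesis gives $\xrt_v f=0$ for every $v\in2\pi\Z^n\setminus0$, hence $\ft(\xrt_v f)(k)=0$ for every $k\in\Z^n$ and every such $v$.

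Now I would feed this into Lemma~\ref{lma:ft-xrt-torus}: for a fixed $k\in\Z^n$, the lemma says $\ft(\xrt_v f)(k)=\ft f(k)$ whenever $k\cdot v=0$. So if I can exhibit, for every $k\in\Z^n$, some $v\in2\pi\Z^n\setminus0$ with $k\cdot v=0$, then $\ft f(k)=0$ for all $k$, and Theorem~\ref{thm:hd-fs} forces $f=0$.

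The remaining task is therefore the elementary lattice step of producing, for each $k\in\Z^n$, a nonzero $v\in2\pi\Z^n$ orthogonal to it. For $k=0$ any $v\in2\pi\Z^n\setminus0$ works, which takes care of the zero mode (i.e.\ the mean of $f$). For $k\neq0$, the hypothesis $n\geq 2$ is exactly what is needed: if $k$ has at least two nonzero entries, say $k_i$ and $k_j$, then $v=2\pi(k_j e_i - k_i e_j)$ is a nonzero element of $2\pi\Z^n$ with $k\cdot v=0$; if $k$ has only one nonzero entry, say $k_i$, then I can pick any $v=2\pi e_j$ with $j\neq i$.

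I do not expect any real obstacle here, since Lemma~\ref{lma:ft-xrt-torus} does the analytic heavy lifting; the only subtlety is the purely arithmetic one of finding a lattice direction perpendicular to a prescribed lattice vector, and that is where the standing assumption $n\geq 2$ enters in an essential way. In dimension one the orthogonal complement of a nonzero $k$ contains only $0$, which matches the fact (already observed in the introduction) that the X-ray transform is not injective for $n=1$.
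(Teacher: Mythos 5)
Your proof is correct and follows essentially the same route as the paper: reduce to showing $\ft f(k)=0$ for all $k$ via Lemma~\ref{lma:ft-xrt-torus}, then produce for each $k$ a nonzero lattice vector $v\in2\pi\Z^n\setminus0$ orthogonal to it. The only difference is that the paper delegates the lattice step to exercise~\ref{ex:Z-OG}, whereas you carry it out explicitly (and correctly).
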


\begin{proof}
Since the Fourier transform is bijective by theorem~\ref{thm:hd-fs}, it suffices to show that the Fourier series of~$f$ vanishes.
To that end, take any $k\in\Z^n$.
There is some $v\in2\pi\Z^n\setminus\{0\}$ so that $k\cdot v=0$ (exercise~\ref{ex:Z-OG}).
By lemma~\ref{lma:ft-xrt-torus} we have $\ft\xrt_vf(k)=\ft f(k)$.
Since $\xrt_vf=0$ by assumption, we get $\ft f(k)=0$ for all $k\in\Z^n$.
\end{proof}

\begin{ex}
\label{ex:Z-OG}
Show that for any $k\in\Z^n$ there exists $w\in\Z^n\setminus\{0\}$ so that $k\cdot w=0$.
\end{ex}

\begin{ex}
Show that if $v\in2\pi\Z^n\setminus\{0\}$, then~$\xrt_v$ is not injective.
Use lemma~\ref{lma:ft-xrt-torus} or take a function $f\in C^\infty(\T^n)$ and consider the function $v\cdot\nabla f(x)$.
\end{ex}

\begin{ex}
\label{ex:torus-xrt-scaling}
Let $v\in2\pi\Z^n\setminus\{0\}$ and $m\in\Z\setminus\{0\}$.
Show that $\xrt_{mv}=\xrt_v$.
\end{ex}

\begin{ex}
All of the results in this section are valid for $n=1$ apart from exercise~\ref{ex:Z-OG}.
When $n=1$, one can only find an orthogonal $w\in\Z$ for $k=0$.
By exercise~\ref{ex:torus-xrt-scaling} all one can measure about $f\in C(\T^1)$ is~$\xrt_{2\pi}f$.
What does this mean for recovering the Fourier coefficients~$\ft f(k)$?
(Compare to exercise~\ref{ex:1.7}.)
\end{ex}

\begin{ex}
We have excluded $v=0$ from our discussion.
Why is this reasonable, considering the original problem?
What is the operator~$\xrt_0$?
\end{ex}

As a corollary, we get the following injectivity result:

\begin{theorem}
\label{xrtthm:torus}
Suppose $f\in C_B$ integrates to zero over all lines through~$B$.
Then $f=0$.
\end{theorem}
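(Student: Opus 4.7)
The plan is to reduce this Euclidean statement to the torus result that has already been established, essentially by chaining together the three preceding facts: Lemma~\ref{lma:torus-to-Rn}, Exercise~\ref{ex:torus-to-Rn}, and Theorem~\ref{thm:xrt-torus}. No new analytic idea is needed; the work is bookkeeping.

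Concretely, given $f\in C_B$ with $\xrt f=0$ on every line meeting $B$ (and trivially on every other line, since $f$ vanishes outside $B$), I would first form the periodic extension $\tilde f\in C(\T^n)$ as described before Lemma~\ref{lma:torus-to-Rn}, using that $\bar B\subset(-\pi,\pi)^n$ so the extension is well defined and continuous on the torus. Then I would apply Lemma~\ref{lma:torus-to-Rn}: for each $v\in 2\pi\Z^n\setminus 0$ and each $x\in\T^n$, the value $\xrt_v\tilde f(x)$ is a finite sum of integrals of $f$ over straight line segments joining boundary points of the cube $[-\pi,\pi]^n$; each such integral is a value of $\xrt f$ on a line in $\R^n$ and hence vanishes by hypothesis. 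Therefore $\xrt_v\tilde f\equiv 0$ for every $v\in 2\pi\Z^n\setminus 0$.

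At this point Theorem~\ref{thm:xrt-torus} applies directly to $\tilde f\in C(\T^n)$ and yields $\tilde f=0$ on the torus. Restricting back to the fundamental domain $(-\pi,\pi)^n\supset \bar B$ gives $f=0$. This is exactly the implication that Exercise~\ref{ex:torus-to-Rn} asks the reader to verify, so invoking it packages the argument cleanly.

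There is no real obstacle, but the one subtle point to be careful about is the boundary of the fundamental domain: one must make sure the periodic extension $\tilde f$ is actually continuous on $\T^n$ (which is fine because $f$ vanishes on a neighborhood of $\partial([-\pi,\pi]^n)$, as $\bar B$ is strictly inside the cube), and one must check that every line through $B$ in $\R^n$ really does correspond to pieces of some periodic geodesic on $\T^n$ so that the data from $\xrt f$ is sufficient to kill all $\xrt_v\tilde f$. Both points are handled by the proof of Lemma~\ref{lma:torus-to-Rn}, so the statement follows as an immediate corollary.
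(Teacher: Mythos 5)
Your proposal is correct and follows exactly the route the paper takes: it cites Lemma~\ref{lma:torus-to-Rn}, Exercise~\ref{ex:torus-to-Rn}, and Theorem~\ref{thm:xrt-torus} in the same order, merely spelling out the bookkeeping that the paper leaves implicit. The extra care you take with continuity of the periodic extension and with the correspondence between Euclidean lines and periodic geodesics is a welcome elaboration but introduces nothing beyond what the cited lemma already establishes.
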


\begin{proof}
This follows from lemma~\ref{lma:torus-to-Rn}, exercise~\ref{ex:torus-to-Rn}, and theorem~\ref{thm:xrt-torus}.
\end{proof}

\begin{ex}
Summarize in your own words the proof of injectivity of the X-ray transform given in this section.
\end{ex}

Injectivity in a larger ball and therefore in the whole space~$C_c(\R^n)$ follows by a scaling argument.

\begin{figure}[t]
    \centering
    \includegraphics[scale=0.35,trim={3cm 0 3cm 0},clip,page=12]{xrt-figs1c.pdf}
    \caption{The \sini{directions} passing through \viher{lattice points} are the ones we used on the torus. When \puna{projected to the unit sphere~$\Sphere^1$}, the set of directions is dense. Any single line passes through several lattice points, which is closely related to exercise~\ref{ex:torus-xrt-scaling}.}
    \label{fig:torus-directions}
\end{figure}

It is worth noting that in this proof we did not use X-rays in all directions.
Only the directions in $2\pi\Z^n\setminus\{0\}$ were used.
If one projects this set radially to the unit sphere~$\Sphere^{n-1}$, one gets a countable dense set as illustrated in figure~\ref{fig:torus-directions}.

\qa

\section{Injectivity via angular Fourier series}
\label{sec:ang-fs}

In this section and the next section we will give our second injectivity proof based on Fourier series with respect to the angular variable in polar coordinates.

\subsection{Angular Fourier series}

In this section we will give a new way to prove injectivity of the X-ray transform.
This is the one found by Allan Cormack, who together with the electrical engineer Godfrey Hounsfield was awarded the Nobel Prize in Physiology or Medicine for the development of computer assisted tomography in 1979.
However, Cormack was not the first one to solve the mathematical inverse problem; it had been done in 1917 by Johann Radon, but without an idea to apply it to tomography.
Radon's inversion method will be covered in section~\ref{sec:radon}.

We study the problem in two dimensions.
It is most convenient to consider the problem in the punctured closed disc
\begin{equation}
\label{eq:pud}
\pud
=
\{x\in\R^2;0<\abs{x}\leq1\}.
\end{equation}
Recall exercise~\ref{ex:2d-hd} concerning the two-dimensional case.

Our aim is to reconstruct a continuous function $f\colon\pud\to\C$ from its integrals over all lines through~$\pud$.
We will not use the lines that pass through the origin.
That is, we throw away some data.
Avoiding the origin simply makes the use of polar coordinates more convenient and does not make the result any weaker.

Our original problem was to reconstruct a function in the whole disc, but it turns out to be convenient to throw away some data.
This is not unusual in inverse problems.
It is often best to look at a convenient subset of the data.
Notice that in this section we will throw away a different subset of data than in the previous one.
However, the results are often stated for all of the data for clarity.

We will use polar coordinates $r\in(0,1]$ and $\theta\in\R/2\pi\Z$ on~$\pud$.
For any fixed~$r$, the function $f(r,\dummy)$ is a continuous function $\R/2\pi\Z\to\C$.
We expand it in a Fourier series.
Now the coefficients of the Fourier series depend on the variable~$r$.
We have
\begin{equation}
\label{eq:fs-ang-ak}
f(r,\theta)
=
\sum_{k\in\Z}a_k(r)e^{ik\theta}.
\end{equation}
The Fourier coefficients may be calculated as
\begin{equation}
\label{eq:ang-f-component}
a_k(r)
=
\frac1{2\pi}
\int_0^{2\pi} e^{-ik\theta}f(r,\theta)\dd\theta.
\end{equation}
From this expression one can see that each $a_k\colon(0,1]\to\C$ is continuous.
The only difference to the usual Fourier series on the circle is the appearance of the parameter~$r$.

We will write $f_k(r,\theta)=a_k(r)e^{ik\theta}$, so that the Fourier series becomes simply
\begin{equation}
\label{eq:fs-ang-fk}
f(r,\theta)
=
\sum_{k\in\Z}
f_k(r,\theta).
\end{equation}
We will not study the details of this series too deeply, but we remark that the terms are $L^2$-orthogonal and the usual~$L^2$ theory of Fourier series applies with some modifications due to the presence of~$r$.
It will suffice for us that $f=0$ if and only if $f_k=0$ for all $k\in\Z$.

\begin{ex}
\label{ex:ang-fs}
Suppose $f\colon\pud\to\C$ is continuous.
Show that the following are equivalent:
\begin{enumerate}[(a)]
\item $f=0$
\item $f_k=0$ for all $k\in\Z$
\item $a_k=0$ for all $k\in\Z$
\end{enumerate}
Theorem~\ref{thm:1d-fs} will be of use.
In fact, the whole angular Fourier series makes sense because of this theorem.
\end{ex}

In higher dimensions the functions~$e^{ik\theta}$ need to be replaced with spherical harmonics.
This is one of the reasons why it is convenient to restrict to dimension two.

One can study the angular Fourier series in the whole plane if one wants.
As long as the function is continuous or~$L^2$ (or whatever space one might be working with), one can apply the one-dimensional Fourier series circle by circle.

\subsection{The X-ray transform in polar coordinates}

For any point $x\in\pud$, let~$L_x$ be the line segment connecting boundary points of the unit disc so that~$x$ is the closest point to the origin on~$L_x$.
If $\abs{x}=1$, the line will degenerate into a point.
This is a convenient way to parametrize all lines through the closed unit disc that do not meet the origin.

For a continuous function $f\colon\pud\to\C$, we define~$\xrt f(x)$ to be the integral of~$f$ over~$L_x$.
The mapping $x\mapsto L_x$ gives a bijection between the set of points and the set of lines.
Again, we use polar coordinates, so that the X-ray transform of~$f$ is a function $\xrt f(r,\theta)$.
It will be useful to write this as a Fourier series in the variable~$\theta$.

\begin{figure}[t]
    \centering
    \includegraphics[scale=0.4,trim={3cm 0 3cm 0},clip,page=13]{xrt-figs1c.pdf}
    \caption{The line~$L_{r,\theta}$ described by \viher{the closest point} with \sini{polar coordinates $(r,\theta)$} and \puna{the corresponding unit vector~$v_\theta$}.}
    \label{fig:L-r-theta}
\end{figure}

For $\theta\in\R/2\pi\Z$, denote $v_\theta=(\cos(\theta),\sin(\theta))$.
For $r>0$ and $\theta\in\R/2\pi\Z$, the corresponding line can be written as
\begin{equation}
L_{r,\theta}
=
\{x\in\R^2;x\cdot v_\theta=r\}.
\end{equation}
See figure~\ref{fig:L-r-theta}.
As mentioned above, this covers all the lines that do not meet the origin.
If we use ``extended polar coordinates'' where $r\geq0$, the we can indeed parametrize all lines.
In some sense, this corresponds to replacing the origin with ``directed origins'', which is a compactification of the punctured disc and can be considered a blow-up of the origin.
In fact, one can even let the radius~$r$ to be any real number; this would lead to a global two-fold parametrization of all the lines.

\subsection{Rotations and diagonalizability}

Fix any $\phi\in\R$.
Let us define the rotation operator~$\rot_\phi$ on functions defined on~$\pud$ so that $(\rot_\phi f)(r,\theta)=f(r,\theta+\phi)$.
It is clear that~$\rot_\phi$ maps continuous functions to continuous functions.

For a continuous $f\colon\pud\to\C$, both~$f$ and~$\xrt f$ are functions on~$\pud$.
That is, we have two operators acting conveniently on the same space: $\xrt,\rot_\phi\colon C(\pud)\to C(\pud)$.
This allows us to make sense of the function~$\rot_\phi\xrt f$.
The interplay between rotations and the X-ray transform is important.

\begin{ex}
\label{ex:rot-commute}
Take any $\phi\in\R$ and a continuous $f\colon\pud\to\C$.
Explain why our two operators commute: $\xrt\rot_\phi f=\rot_\phi\xrt f$.
\end{ex}

The fact that rotations commute with the X-ray transform will bring additional structure.

The next lemma says that the X-ray transform commutes with the integral with respect to a parameter.

\begin{lemma}
\label{lma:ang-fubini}
Let $f(x;\phi)$ be a continuous function defined on $\pud\times[0,2\pi]$ (or $\pud\times\T^1$).
Let~$\gamma$ be any line through~$\pud$ that does not meet the origin.
Then
\begin{equation}
\int_0^{2\pi}(\xrt f(\dummy;\phi))(\gamma)\dd\phi
=
\xrt F(\gamma),
\end{equation}
where $F(x)=\int_0^{2\pi}f(x;\phi)\dd\phi$.
\end{lemma}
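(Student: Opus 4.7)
The statement is essentially a Fubini-type swap of integrals, so my plan is to unwind the definitions, reduce to integration over a compact rectangle, apply Fubini, and then re-wrap.

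First I would parametrize the portion of $\gamma$ inside the closed unit disc. Since $\gamma$ does not meet the origin, its intersection with $\pud$ is a compact line segment, say $\gamma\colon[a,b]\to\pud$, traversed at unit speed. By the definition of $\xrt$ in polar coordinates given above,
\begin{equation}
(\xrt f(\dummy;\phi))(\gamma) = \int_a^b f(\gamma(t);\phi)\dd t
\end{equation}
for each fixed $\phi\in[0,2\pi]$. Integrating this expression in $\phi$ gives
\begin{equation}
\int_0^{2\pi}(\xrt f(\dummy;\phi))(\gamma)\dd\phi
= \int_0^{2\pi}\int_a^b f(\gamma(t);\phi)\dd t\dd\phi.
\end{equation}

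Next, I would justify swapping the order of integration. The map $(t,\phi)\mapsto f(\gamma(t);\phi)$ is continuous on the compact rectangle $[a,b]\times[0,2\pi]$ (as the composition of the continuous $\gamma$ with the continuous $f$), hence bounded and integrable. Fubini's theorem therefore applies and yields
\begin{equation}
\int_0^{2\pi}\int_a^b f(\gamma(t);\phi)\dd t\dd\phi
= \int_a^b\int_0^{2\pi}f(\gamma(t);\phi)\dd\phi\dd t
= \int_a^b F(\gamma(t))\dd t,
\end{equation}
which equals $\xrt F(\gamma)$ by the same parametrization, provided $F$ is continuous so that $\xrt F$ makes sense in our class. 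Continuity of $F$ on $\pud$ is a standard consequence of continuity of $f$ on the compact set $\pud\times[0,2\pi]$ together with uniform continuity, so this is just a routine check.

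The only mild subtlety, and the one I would want to state carefully, is the parametrization step: one should note that the integrand of $\xrt f(\dummy;\phi)$ is supported where $f(\dummy;\phi)$ is defined, so restricting the line to $\gamma\cap\pud$ (a closed interval of $\R$) is legitimate and produces the same value as the integral over all of $\gamma$. Apart from that, the entire argument is just Fubini on a compact rectangle, so there is no real obstacle; the content of the lemma is the permission to interchange the $\phi$-average with the X-ray transform.
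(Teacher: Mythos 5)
Your proof is correct and is exactly the intended argument: the paper leaves this lemma as an exercise (its very label is \texttt{lma:ang-fubini}), and the content is precisely the reduction to a double integral over the compact rectangle $[a,b]\times[0,2\pi]$ followed by Fubini, as you carry out. One cosmetic point: $\pud\times[0,2\pi]$ is \emph{not} compact (the origin is punctured), so continuity of $F$ should be argued locally, on sets of the form $(\bar B(x,r)\cap\pud)\times[0,2\pi]$ with $r<\abs{x}$ --- but this changes nothing of substance.
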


\begin{ex}
Prove the lemma.
It may or may not be useful that it is irrelevant that the parameter space is $[0,2\pi]$.
Integrating over a much more general parameter space would work just as well.
\end{ex}

\begin{lemma}
\label{lma:ang-fs-xrt}
Let $f\colon\pud\to\C$ be a continuous function.
Then
\begin{equation}
\frac1{2\pi}
\int_0^{2\pi}e^{-ik\theta}\xrt f(r,\theta)\dd\theta
=
\xrt f_k(r,0)
\end{equation}
for all $k\in\Z$.
\end{lemma}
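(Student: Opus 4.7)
The left-hand side of the claimed identity is simply the $k$-th angular Fourier coefficient of $\xrt f(r,\dummy)$ (for fixed $r$). The strategy is to move this $\theta$-averaging inside the X-ray transform using lemma~\ref{lma:ang-fubini} and then recognize the resulting averaged function as $f_k$. The key structural input is the rotational equivariance $\xrt\rot_\phi=\rot_\phi\xrt$ from exercise~\ref{ex:rot-commute}.

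The first step is to rewrite the integrand. Since $(\rot_\theta \xrt f)(r,0)=\xrt f(r,\theta)$ directly from the definition of $\rot_\theta$, and since $\rot_\theta$ commutes with $\xrt$, we have
\begin{equation*}
\xrt f(r,\theta)=\xrt(\rot_\theta f)(r,0).
\end{equation*}
Substituting this in and applying lemma~\ref{lma:ang-fubini} to the continuous function $(x,\theta)\mapsto\frac{1}{2\pi}e^{-ik\theta}(\rot_\theta f)(x)$ on $\pud\times[0,2\pi]$ --- using linearity of $\xrt$ to pull the scalar $e^{-ik\theta}$ through --- the $\theta$-integral moves inside~$\xrt$, yielding
\begin{equation*}
\frac{1}{2\pi}\int_0^{2\pi}e^{-ik\theta}\xrt f(r,\theta)\dd\theta=\xrt G(r,0),
\qquad
G(x):=\frac{1}{2\pi}\int_0^{2\pi}e^{-ik\theta}(\rot_\theta f)(x)\dd\theta.
\end{equation*}

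It remains to check that $G=f_k$. Writing $x=(r,\psi)$ in polar coordinates and substituting $\alpha=\psi+\theta$,
\begin{equation*}
G(r,\psi)=\frac{1}{2\pi}\int_0^{2\pi}e^{-ik\theta}f(r,\psi+\theta)\dd\theta=e^{ik\psi}\cdot\frac{1}{2\pi}\int_0^{2\pi}e^{-ik\alpha}f(r,\alpha)\dd\alpha=e^{ik\psi}a_k(r)=f_k(r,\psi),
\end{equation*}
where periodicity in $\alpha$ was used to restore the integration domain to $[0,2\pi]$ and equation~\eqref{eq:ang-f-component} identifies the integral as $a_k(r)$. This gives $\xrt G(r,0)=\xrt f_k(r,0)$, completing the proof.

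There isn't really a serious obstacle here; the only care needed is the bookkeeping with the identity $\xrt f(r,\theta)=\xrt(\rot_\theta f)(r,0)$ and the change of variable in the final display. A tempting alternative would be to expand $f=\sum_k f_k$, observe that each $f_k$ is a rotational eigenfunction (hence so is $\xrt f_k$, giving $\xrt f_k(r,\theta)=e^{ik\theta}\xrt f_k(r,0)$), and read off the Fourier coefficients; but that approach requires justifying the interchange of $\xrt$ with an infinite sum, which is why the direct route via lemma~\ref{lma:ang-fubini} is cleaner.
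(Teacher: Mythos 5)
Your proof is correct and follows essentially the same route as the paper's: rewrite $\xrt f(r,\theta)=\xrt(\rot_\theta f)(r,0)$ via the rotational equivariance of exercise~\ref{ex:rot-commute}, push the weighted $\theta$-average inside $\xrt$ with lemma~\ref{lma:ang-fubini}, and identify the averaged function as $f_k$ by the change of variables computation (the paper's equation~\eqref{eq:vv4}). The only difference is cosmetic ordering --- the paper establishes the identification with $f_k$ first and then runs the chain of equalities --- so there is nothing to add.
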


The angle~$0$ might seem weird at first.
It is best to regard the right-hand side as the X-ray transform of the one-dimensional function~$a_k$.
Introducing a non-zero angle is possible in the formula above, but it gives no additional information.

\begin{proof}[Proof of lemma~\ref{lma:ang-fs-xrt}]
In the integrals below limits are occasionally shifted from $(0,2\pi)$ due to changes of variables.
Since the relevant functions are $2\pi$-periodic, we do not need to change the interval of integration.

Fix any $k\in\Z$.
First, we observe that
\begin{equation}
\label{eq:vv4}
\begin{split}
\frac1{2\pi}
\int_0^{2\pi}e^{-ik\theta}\rot_\theta f(r,\phi)\dd\theta
&=
\frac1{2\pi}
\int_0^{2\pi}e^{-ik\theta}f(r,\phi+\theta)\dd\theta
\\&=
\frac1{2\pi}
\int_0^{2\pi}e^{-ik(\omega-\phi)}f(r,\omega)\dd\omega
\\&=
e^{ik\phi}a_k(r)
\\&=
f_k(r,\phi).
\end{split}
\end{equation}
Using
the definitions,
exercise~\ref{ex:rot-commute},
lemma~\ref{lma:ang-fubini}, and
equation~\eqref{eq:vv4},
we get
\begin{equation}
\begin{split}
\frac1{2\pi}
\int_0^{2\pi}e^{-ik\theta}\xrt f(r,\theta)\dd\theta
&=
\frac1{2\pi}
\int_0^{2\pi}e^{-ik\theta}\rot_\theta\xrt f(r,0)\dd\theta
\\&=
\frac1{2\pi}
\int_0^{2\pi}e^{-ik\theta}\xrt\rot_\theta f(r,0)\dd\theta
\\&=
\frac1{2\pi}
\int_0^{2\pi}\xrt (e^{-ik\theta}\rot_\theta f)(r,0)\dd\theta
\\&=
\xrt\left(
\frac1{2\pi}\int_0^{2\pi}(e^{-ik\theta}\rot_\theta f)\dd\theta
\right)
(r,0)
\\&=
\xrt f_k(r,0).
\end{split}
\end{equation}
This concludes the proof.
\end{proof}

\begin{ex}
\label{ex:xrt-fs}
The function $f(r,\theta)$ was written as a Fourier series $f=\sum_{k\in\Z}f_k$ in~\eqref{eq:fs-ang-fk}.
Similarly, $g(r,\theta)=\xrt f(r,\theta)$ can be written as a Fourier series $g=\sum_{k\in\Z}g_k$.
Give a formula for the function~$g_k$ in terms of~$\xrt f$.

Explain why~$g_k$ depends on~$f_k$ but not on any other~$f_m$ for $m\neq k$.
\end{ex}

Our goal, as always, is to show that if $\xrt f=0$, then $f=0$.
By exercise~\ref{ex:xrt-fs} it follows from the assumption that $\xrt f_k=0$ for every $k\in\Z$.
We will then fix any~$k$ and show that $\xrt f_k=0$ implies $f_k=0$.
This problem is essentially one-dimensional, since~$f_k$ corresponds to the continuous function $a_k\colon(0,1]\to\C$.
That is, we are converting a single two-dimensional problem into a sequence of independent one-dimensional problems.
The aim of the next section is to solve this family of one-dimensional problems.
After that we know that $f_k=0$ for each~$k$, and so $f=0$.
What we have now established is that these one-dimensional problems are independent of each other; without this independence the decomposition into Fourier components would have been of little use.

\subsection{Remarks on symmetries and functional analysis}

The Fourier series of the X-ray transform depends in a rather simple way on the Fourier series of the original function.
The~$k$th Fourier component of the X-ray transform only depends on the~$k$th Fourier component of the function.
This is not a coincidence.

The X-ray transform is an operator that takes a function on~$\pud$ to another function on~$\pud$.
It commutes with the rotation operator~$\rot_\theta$ for any~$\theta$, so it also commutes with the derivative~$\partial_\theta$ with respect to the angular coordinate.
One can see this by considering the angular derivative as an infinitesimal rotation or by appealing to equation~\eqref{eq:lie-algebra} below.
Of course, the derivative operator does not map continuous functions to continuous functions, so it should be defined on a different space or be treated as an unbounded operator, but we ignore this technicality.
Anyhow, the important observation is that the derivative~$\partial_\theta$ and the X-ray transform~$\xrt$ commute.

At least physicists are likely to remember that if two real symmetric (or complex hermitian) matrices commute, they are simultaneously diagonalizable.
Similar results hold for infinite-dimensional spaces.
The symmetric situation is an example of a broader phenomenon.
The operator~$i\partial_\theta$ is self-adjoint, but the integral transform does not need to have (or have) any such adjointness properties.

In our specific case this suggests that if we write the whole function space as a direct sum of eigenspaces of~$\partial_\theta$ (or~$i\partial_\theta$), then the X-ray transform is block diagonal.
This is indeed what happens.

\begin{ex}
\label{ex:block-diag1}
Let us first consider a simpler operator in block form.
Take some positive integers $n_1,\dots,n_K$ and define $E=\R^{n_1}\times\cdots\times\R^{n_K}$.
Let $E_k\subset E$ denote the subspace where all but the $k$th component are zero.
Then a linear operator $A\colon E\to E$ can be written in a block form
\begin{equation}
A
=
\begin{pmatrix}
A_{11} & \cdots & A_{1K} \\
\vdots & \ddots & \vdots \\
A_{K1} & \cdots & A_{KK}
\end{pmatrix},
\end{equation}
where each $A_{ij}$ is a linear map $\R^{n_j}\to\R^{n_i}$.
(If each~$n_k$ is~$1$, then this is just the usual matrix representation.)
Show that the matrix $A$ is block diagonal (i.e. $A_{ij}=0$ when $i\neq j$) if and only if $A(E_k)\subset E_k$ for all $k=1,\dots,K$.
\end{ex}

\begin{ex}
\label{ex:block-diag2}
Take the block diagonal matrix of the previous exercise.
Show that $A$ is invertible if and only if each diagonal matrix $A_{kk}$ is.
\end{ex}

For example, in~$L^2(D)$, the eigenspace of~$\partial_\theta$ with eigenvalue~$ik$, $k\in\Z$, is
\begin{equation}
H_k
=
\{f\in L^2(D);f(r,\theta)=a(r)e^{ik\theta}\text{ for some function }a\}.
\end{equation}
One can do similar things over other function spaces.
Our result in this section shows (apart from regularity assumptions), that $\xrt(H_k)\subset H_k$.
It is also somewhat easy to see that $H_k\perp H_m$ when $k\neq m$.
Moreover, if the operator has suitable continuity properties in~$L^2$ --- and the X-ray transform does --- then one has a very convenient theory in a Hilbert space.

To be more specific, we have
\begin{equation}
L^2(D)
=
L^2(\pud)
=
\bigoplus_{k\in\Z}H_k.
\end{equation}
This direct sum means simply that any $f\in L^2(D)$ can be written as a convergent series $f=\sum_{k\in\Z}f_k$ with $f_k\in H_k$.
This is the decomposition of~$f$ in the eigenspaces of $i\partial_\theta$.

In higher dimensions~$H_k$ can be defined similarly, with the exponential functions~$e^{ik\theta}$ replaced by spherical harmonics.

We have found that in this angular Fourier decomposition the X-ray transform looks like the block diagonal matrix of exercise~\ref{ex:block-diag1}.
In light of exercise~\ref{ex:block-diag2} we will turn our attention to the operators on the diagonal in the next section.


\begin{ex}
Let the two matrices $A,B\in\R^{n\times n}$ be symmetric.
For simplicity, you may additionally assume that all eigenvalues have multiplicity one.
(The result will be true without this assumption.)
Prove that if $AB=BA$, then there is an orthogonal matrix~$U$ so that~$UAU^T$ and~$UBU^T$ are both diagonal.
You may assume it known that for a single real symmetric matrix such a~$U$ exists.
\end{ex}

Passing from rotation symmetry ($\rot_\phi$) to angular derivatives ($\partial_\theta$) was a useful trick.
One may ask how one might find the derivative operator, given the rotations.
A formal calculation gives
\begin{equation}
\label{eq:lie-algebra}
\partial_\theta
=
\left.\frac{\der}{\der\phi}\rot_\phi\right|_{\phi=0}.
\end{equation}
This is a derivative of an operator with respect to a parameter, and it is convenient at times to differentiate at the level of operators rather than functions.
The derivative does not exist as a limit of the difference quotient in~$L^2$, but it does exist in~$C^\infty$, for example.

Much of this may also be seen from the point of view from Lie groups.
Passing from a full symmetry to a differential or infinitesimal symmetry is an example of passing from a Lie group to its Lie algebra.
If something commutes with the action of a Lie group, then it commutes with that of the Lie algebra.
If the Lie group acts by rotations or translations, then the Lie algebra acts by differentiation.
In our planar case the Lie group~$SO(2)$ and its Lie algebra~$\mathfrak{so}(2)$ are one-dimensional and the basis vector of the algebra acts as~$\partial_\theta$.
In more complicated cases the algebra is larger, and it may be more convenient to think of the action by the Casimir operator of the Lie algebra.
In the case of~$SO(n)$ acting on~$\Sphere^{n-1}$ in the usual way, the Casimir operator acts as the spherical Laplacian.
This is how the spherical harmonic decomposition --- an eigenvalue decomposition of the the spherical Laplacian --- arises naturally from spherical symmetry.
The overall lesson is that when an operator commutes with the action of a Lie group, you should expect it to be block-diagonal in the sense that it respects eigenspaces of the action of the corresponding Lie algebra --- or its Casimir element.

\begin{ex}
Show that if~$f\in C^1(\pud)$, then
\begin{equation}
\partial_\theta f(r,\theta)
=
\left.\frac{\der}{\der\phi}(\rot_\phi f)(r,\theta)\right|_{\phi=0}.
\end{equation}
In other words, prove equation~\eqref{eq:lie-algebra}.
(Notice that while~$\rot_\phi$ maps $C^1(\pud)\to C^1(\pud)$, the derivative~$\partial_\theta$ only maps $C^1(\pud)\to C^0(\pud)$.)
\end{ex}

Another thing worth pointing out is that rotation symmetry was crucially important, but Euclidean geometry was not.
Similar arguments work in other rotation symmetric situations.

\qa

\section{Abel transforms}
\label{sec:abel}

\subsection{The block diagonal structure of the X-ray transform in polar coordinates}

As discovered in the previous section, the X-ray transform has a peculiar block diagonal structure.
Now it remains to find the operators on the diagonal.
We shall not use the block diagonal structure in any formal way, but it is an underlying idea the reader should be aware of.

Consider the function $f_k(r,\theta)=e^{ik\theta}a_k(r)$, where $a_k\colon(0,1]\to\C$ is continuous.
We want to find an explicit formula for the X-ray transform of~$f_k$.

To this end, consider the line~$L_{s,\phi}$ whose closest point to the origin is~$(s,\phi)$.
We may assume $0<s<1$.
Using arc length parametrization (unit speed parametrization) with zero parameter at the midpoint, we can write this line as the curve
\begin{equation}
\gamma\colon[-\sqrt{1-s^2},\sqrt{1-s^2}]\to\pud
\end{equation}
with
\begin{equation}
\gamma(t)
=
(\sqrt{s^2+t^2},\phi+\arctan(t/s)).
\end{equation}
The trace of the curve~$\gamma$ is exactly~$L_{s,\phi}$.s

\begin{figure}[t]
    \centering
    \includegraphics[scale=0.5,trim={3cm 0 3cm 0},clip,page=14]{xrt-figs1c.pdf}
    \caption{An integral over \sini{a line} leads to an Abel transform. \viher{The closest point} and \puna{a point on the line} are both described in polar coordinates. The parameter~$t$ is the travel time along \sini{the line}, with $t=0$ at \viher{the closest point}.}
    \label{fig:abel-line}
\end{figure}

\begin{ex}
Justify this formula geometrically.
Figure~\ref{fig:abel-line} may be of help.
\end{ex}

We will split the interval in two halves and change the variable of integration from arc length $t\in(0,\sqrt{1-s^2})$ to radius $r=\sqrt{s^2+t^2}\in(s,1)$ on each half.

Now we can simply calculate:\footnote{Here and henceforth $\sum_\pm$ means summing over the two sign options.}
\begin{equation}
\label{eq:vv5}
\begin{split}
\xrt f_k(s,\phi)
&=
\int_{-\sqrt{1-s^2}}^{\sqrt{1-s^2}}f_k(\sqrt{s^2+t^2},\phi+\arctan(t/s))\dd t
\\&=
\sum_{\pm}
\int_0^{\sqrt{1-s^2}}f_k(\sqrt{s^2+t^2},\phi\pm\arctan(t/s))\dd t
\\&=
\sum_{\pm}
\int_s^1f_k(r,\phi\pm\arccos(s/r))\frac{\der t}{\der r}\dd r
\\&=
\sum_{\pm}
\int_s^1a_k(r)e^{ik\phi\pm ik\arccos(s/r)}\frac{1}{\sqrt{1-(s/r)^2}}\dd r
.
\end{split}
\end{equation}
Two steps need justification, and they are left as the following two exercises.

\begin{ex}
Explain why $\arctan(t/s)=\arccos(s/r)$.
\end{ex}

\begin{ex}
Why is the Jacobian $\frac{\der t}{\der r}$ equal to $1/\sqrt{1-(s/r)^2}$ as indicated above?
\end{ex}

Our change of variable was in fact singular.
But the singularity is integrable and our calculation is still valid, but to be pedantic, one may want to consider the integral with $t\in(\eps,\sqrt{1-s^2})$ first and then let $\eps\to0$.

To proceed with the calculation, we must do some trigonometric manipulations.

\begin{ex}
Show that $\sum_{\pm}e^{ik\phi\pm ik\arccos(s/r)}=2e^{ik\phi}\cos(k\arccos(s/r))$.
\end{ex}

It turns out that for $k\in\Z$ and $x\in[-1,1]$, we have $\cos(k\arccos(x))=T_{\abs{k}}(x)$, where~$T_k$ is the~$k$th Chebyshev polynomial of the first kind.
For convenience, we will use the notation~$T_k$ instead of~$T_{\abs{k}}$ even when $k<0$.
It follows from this cosine property of the Chebyshev polynomials that $\max_{x\in[0,1]}T_k(x)=1$ for any $k\in\Z$.
This family of polynomials can be defined recursively for $k\in\N$ by $T_0(x)=1$, $T_1(x)=x$, and $T_k(x)=2xT_{k-1}(x)-T_{k-2}(x)$.
Once one establishes this recursion relation, it follows that the function~$T_k$ is indeed a polynomial.

\begin{ex}
Justify the formulas for~$T_0$ and~$T_1$ and recursion relation for~$T_k$ using the property that $\cos(kx)=T_k(\cos(x))$ for all $k\in\N$.
\end{ex}

Now we can proceed from~\eqref{eq:vv5} to
\begin{equation}
\xrt f_k(s,\phi)
=
2e^{ik\phi}
\int_s^1a_k(r)\frac{T_k(s/r)}{\sqrt{1-(s/r)^2}}\dd r.
\end{equation}
Based on the last section (exercise~\ref{ex:xrt-fs} implies that Fourier transform of the~$k$th Fourier component of~$f$ contains only the~$k$th Fourier component), we expected to pull out the factor~$e^{ik\phi}$, but the exact structure of the rest might be a bit of a surprise.

\subsection{Abel transforms}

\begin{definition}
\label{def:abel}
Fix any $k\in\Z$.
For a continuous function $h\colon(0,1]\to\C$ we define a new continuous function $\A_kh\colon(0,1]\to\C$ by
\begin{equation}
\label{eq:Ak-def}
(\A_kh)(s)
=
2\int_s^1h(r)\frac{T_k(s/r)}{\sqrt{1-(s/r)^2}}\dd r.
\end{equation}
Here~$T_k$ is the $\abs{k}$th Chebyshev polynomial.
We call~$\A_k$ the~$k$th generalized Abel transform.
\end{definition}

\begin{ex}
The integral above is actually only defined for $s\in(0,1)$.
Show that $\lim_{s\to1}\A_kh(s)=0$, so that it makes sense to let $\A_kh(1)=0$ regardless of the value~$h(1)$.

This can be interpreted geometrically through the X-ray transform.
We are calculating the integral of a bounded function over a line segment with distance~$s$ from the origin.
As $s\to1$, this line segment shrinks to a point, so we should expect the integral to go to zero.
\end{ex}

The reason for calling~$\A_k$ a generalized Abel transform is that for $k=0$ we have $T_0\equiv1$ and~$\A_0$ is (one form of) the Abel transform.
These are all integral transforms in the sense that they take one function on the interval and turn it into another function on the interval by means of an integral formula.

We have thus found that if
\begin{equation}
f(r,\theta)
=
\sum_{k\in\Z}e^{ik\theta}a_k(r),
\end{equation}
then
\begin{equation}
\xrt f(r,\theta)
=
\sum_{k\in\Z}e^{ik\theta}\A_k a_k(r).
\end{equation}
This means that the Abel transforms are the operators on our block diagonal.
(See exercises~\ref{ex:block-diag1} and~\ref{ex:block-diag1} and the surrounding discussion.)

We want to show for all $k\in\Z$ that if $\A_kf_k=0$, then $f_k=0$.
That is, we want to show that the generalized Abel transform $\A_k\colon C((0,1])\to C((0,1])$ is an injection for all $k\in\Z$.
(We will not need or prove that~$\A_k$ maps continuous functions to continuous functions, although it is true.)

\begin{lemma}
\label{lma:abel}
The generalized Abel transform $\A_k\colon C((0,1])\to C((0,1])$ is an injection.
Moreover, $h\in C((0,1])$ can be calculated from~$\A_k h$ via
\begin{equation}
\label{eq:abel-inv}
h(r)
=
-\frac1\pi\frac{\der}{\der r}
\int_r^1\A_kh(s)\frac{T_k(s/r)}{s\sqrt{(s/r)^2-1}}\dd s.
\end{equation}
\end{lemma}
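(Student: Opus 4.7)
The natural plan is to verify the inversion formula directly. Writing $g=\A_k h$, substituting \eqref{eq:Ak-def} into the right-hand side of \eqref{eq:abel-inv}, and exchanging the order of integration (Fubini on the triangle $\{r\le s\le r'\le 1\}$, the integrand having only integrable singularities along the two diagonals) rewrites it as
\[
-\frac{2}{\pi}\,\frac{\der}{\der r}\int_r^1 h(r')\,I_k(r,r')\dd r',
\quad\text{where}\quad
I_k(r,r')\coloneqq\int_r^{r'}\frac{T_k(s/r)\,T_k(s/r')\dd s}{s\sqrt{(s/r)^2-1}\sqrt{1-(s/r')^2}}.
\]
The lemma therefore reduces to the striking $k$-independent identity $I_k(r,r')=\pi/2$: granted this, the above becomes $-\frac{\der}{\der r}\int_r^1 h(r')\dd r'=h(r)$ by the fundamental theorem of calculus, giving both injectivity and the inversion formula in one stroke.

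The identity $I_k=\pi/2$ is the heart of the proof. First I would substitute $s^2=r^2\cos^2(\theta/2)+r'^2\sin^2(\theta/2)$ for $\theta\in[0,\pi]$, under which $\dd s/(s\sqrt{(s^2-r^2)(r'^2-s^2)})=\dd\theta/(2s^2)$. Introducing $\mu>0$ via $\cosh\mu=(r^2+r'^2)/(2rr')$ and $\sinh\mu=(r'^2-r^2)/(2rr')$ (consistent because $(r^2+r'^2)^2-(r'^2-r^2)^2=(2rr')^2$), one has $s^2=rr'(\cosh\mu-\sinh\mu\cos\theta)$, so
\[
I_k=\tfrac{1}{2}\int_0^\pi\frac{T_k(s/r)\,T_k(s/r')}{\cosh\mu-\sinh\mu\cos\theta}\dd\theta.
\]
Since $s/r\ge 1\ge s/r'$, set $s/r=\cosh\alpha$ and $s/r'=\cos\beta$; then $T_k(s/r)\,T_k(s/r')=\cosh(k\alpha)\cos(k\beta)=\operatorname{Re}\cos\bigl(k(\beta+i\alpha)\bigr)=\operatorname{Re} T_k(\xi)$, with $\xi=\cos(\beta+i\alpha)$. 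The identity $(r'^2-s^2)(s^2-r^2)=\tfrac14(r'^2-r^2)^2\sin^2\theta$ gives $\sin\beta\sinh\alpha=\sinh\mu\sin\theta$, which together with $\cos\beta\cosh\alpha=\cosh\mu-\sinh\mu\cos\theta$ collapses $\xi$ to the remarkably clean formula $\xi(\theta)=\cosh\mu-\sinh\mu\,e^{i\theta}$.

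To finish, I would symmetrise the integrand over $\theta\in[0,2\pi]$ (the substitution $\theta\mapsto-\theta$ turns $T_k(\xi)$ into $T_k(\overline\xi)$, and their sum is $2\operatorname{Re} T_k(\xi)$) and pass to the variable $z=e^{i\theta}$ on the unit circle. Rewriting the real denominator as $\cosh\mu-\sinh\mu\cos\theta=-\sinh\mu(z-\coth(\mu/2))(z-\tanh(\mu/2))/(2z)$ converts the task into the contour integral
\[
I_k=\frac{i}{2\sinh\mu}\oint_{|z|=1}\frac{T_k(\cosh\mu-\sinh\mu\,z)\dd z}{(z-\coth(\mu/2))(z-\tanh(\mu/2))}.
\]
Only the pole $z_0=\tanh(\mu/2)$ lies inside $|z|=1$, and the decisive computation is that $\cosh\mu-\sinh\mu\tanh(\mu/2)=1$ (immediate from $\cosh\mu=1+2\sinh^2(\mu/2)$ and $\sinh\mu=2\sinh(\mu/2)\cosh(\mu/2)$). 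The Chebyshev factor at the pole is therefore $T_k(1)=1$ for every $k$, and the residue theorem yields $I_k=\pi/2$. The main obstacle is the algebra that reveals the clean form $\xi(\theta)=\cosh\mu-\sinh\mu e^{i\theta}$; once this is in hand, the $k$-independence of $I_k$ is forced by the universal value $T_k(1)=1$, which is ultimately why a single inversion formula works uniformly in $k$.
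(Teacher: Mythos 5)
Your reduction is exactly the paper's: substitute the definition of $\A_kh$, apply Fubini on the triangle, and reduce everything to the kernel identity $K_k(r,t)=\pi/2$ (your $I_k$ is the paper's $K_k$ from \eqref{eq:K-def}), after which the fundamental theorem of calculus finishes the proof. Where you genuinely diverge is in how the identity \eqref{eq:K-fact} is established. The paper calls it ``somewhat magical'' and outsources the verification to exercises: a change of variable $s^2=\frac12[(t^2+r^2)+y(t^2-r^2)]$, explicit evaluation of two elementary integrals to handle $k=0$ and $k=\pm1$, and a bonus exercise proving the recurrence $K_{k+2}=K_k$ to cover general $k$. You instead give a single uniform argument: the substitution $s^2=r^2\cos^2(\theta/2)+t^2\sin^2(\theta/2)$, the observation that $T_k(s/r)T_k(s/t)=\operatorname{Re}T_k(\xi)$ with $\xi=\cosh\mu-\sinh\mu\,e^{i\theta}$, and a residue computation in which the only pole inside the unit circle sits at the point where the Chebyshev factor is $T_k(1)=1$. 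I checked the algebra ($s^2=rt(\cosh\mu-\sinh\mu\cos\theta)$, the Jacobian $\der s/(s\sqrt{(s/r)^2-1}\sqrt{1-(s/t)^2})=\der\theta/(2s^2/(rt))$, the value $\cosh\mu-\sinh\mu\tanh(\mu/2)=1$, and the residue giving $\pi/2$) and it is correct. Your approach buys a genuine explanation of the $k$-independence --- it is forced by $T_k(1)=1$ --- and avoids the induction entirely; the paper's route is more elementary per step (only real one-variable integrals) but never assembles a complete proof for all $k$ in the text. The only points you should make explicit are that $T_k$ has real coefficients (so $T_k(\bar\xi)=\overline{T_k(\xi)}$, justifying the symmetrisation) and that $T_k(\cos w)=\cos(kw)$ extends to complex $w$ by analytic continuation; both are immediate.
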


\begin{proof}
It suffices to prove~\eqref{eq:abel-inv}.
We start by examining the integral:
\begin{equation}
\label{eq:vv6}
\begin{split}
J(r)
&\coloneqq
\int_r^1\A_kh(s)\frac{T_k(s/r)}{s\sqrt{(s/r)^2-1}}\dd s
\\&=
\int_r^1
\left(
2\int_s^1h(t)\frac{T_k(s/t)}{\sqrt{1-(s/t)^2}}\dd t
\right)
\frac{T_k(s/r)}{s\sqrt{(s/r)^2-1}}\dd s
\\&=
2
\int_r^1
\int_s^1
h(t)
\frac{T_k(s/t)}{\sqrt{1-(s/t)^2}}
\frac{T_k(s/r)}{s\sqrt{(s/r)^2-1}}
\dd t
\dd s
\\&=
2
\int_r^1
\int_r^t
h(t)
\frac{T_k(s/t)}{\sqrt{1-(s/t)^2}}
\frac{T_k(s/r)}{s\sqrt{(s/r)^2-1}}
\dd s
\dd t
\\&=
2
\int_r^1
h(t)
K_k(r,t)
\dd t
,
\end{split}
\end{equation}
where
\begin{equation}
\label{eq:K-def}
K_k(r,t)
=
\int_r^t
\frac{T_k(s/t)}{\sqrt{1-(s/t)^2}}
\frac{T_k(s/r)}{s\sqrt{(s/r)^2-1}}
\dd s.
\end{equation}
Now, somewhat magically,
\begin{equation}
\label{eq:K-fact}
K_k(r,t)
=
\frac\pi2
\quad
\text{whenever $0<r<t$ and $k\in\Z$}.
\end{equation}
Some cases will be treated in the exercises.
Therefore
\begin{equation}
J(r)
=
\pi
\int_r^1
h(t)
\dd t.
\end{equation}
The desired result now follows by the fundamental theorem of calculus.
\end{proof}

\begin{ex}
Explain why the limits work as they do when we applied Fubini's theorem in~\eqref{eq:vv6}.
\end{ex}

\begin{ex}
Prove that for every $\lambda>0$ we have $K_k(\lambda r,\lambda t)=K_k(r,t)$.
You can use this to make simplifying assumptions in subsequent calculations if you want to.
\end{ex}

\begin{ex}
Make the change of variable $s^2=\frac12[(t^2+r^2)+y(t^2-r^2)]$ to~\eqref{eq:K-def} and simplify the resulting expression.
It is wise to regard the measure as~$\der s/s$.
You can leave the Chebyshev polynomials untouched.
\end{ex}

\begin{ex}
Calculate by hand
\begin{equation}
\int_{-1}^1\frac{1}{(a+y)\sqrt{1-y^2}}\dd y,
\end{equation}
where $a>1$ is a real parameter.
It may be convenient to differentiate $\arctan\left(\frac{1+ay}{\sqrt{(a^2-1)(1-y^2)}}\right)$.
\end{ex}

\begin{ex}
Making use of $T_0(x)=1$, $T_1(x)=x$, and the previous exercises, calculate $K_0(r,t)$ and $K_1(r,t)$.
\end{ex}

\begin{bex}
Prove the recurrence relation $K_{k+2}(r,t)=K_k(r,t)$.
This together with the previous results shows~\eqref{eq:K-fact}.
\todo{Do this yourself and rethink. Hints?}
\end{bex}

One can define an operator~$\B_k$ by
\begin{equation}
\B_kh(r)
=
\frac1\pi
\int_r^1h(s)\frac{T_k(s/r)}{s\sqrt{(s/r)^2-1}}\dd s.
\end{equation}
It seems that~$\B_k$ is very similar in nature to~$\A_k$, so one expects it to have similar mapping properties.
Since $\B_k\A_k h(r)=\int_r^1h(r)\dd r$, this means that both~$\A_k$ and~$\B_k$ are ``integrals of order~$\frac12$''.\footnote{They are pseudodifferential operators of order~$-\frac12$. The composition of these operators treats order as one might expect, so the composition differentiates by~$-1$ order --- it is an integral. In fact, the X-ray transform itself is a pseudodifferential operator of order~$-\frac12$. We will not dive into this theory here, but a curious reader is invited to do so. This is a matter of microlocal analysis, and we will touch on it very lightly in sections~\ref{sec:stab-sing} and~\ref{sec:loc-reconstruction}.}

Having Chebyshev polynomials in~$\A_k$ is not important at all for injectivity.
It does help with finding an explicit inversion formula, but similar injectivity results are true in far more generality.
The important things are the limits of integration and the kind of singularity at the lower limit.

\subsection{Injectivity of the X-ray transform}

We have now collected the needed tools, and it remains to declare the result.

\begin{theorem}
\label{xrtthm:cormack}
A continuous function $\pud\to\C$ is uniquely determined by its integrals over all straight lines.
\end{theorem}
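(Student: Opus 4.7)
The plan is to assemble the pieces developed in sections~\ref{sec:ang-fs} and~\ref{sec:abel} into a single argument. Since the X-ray transform $\xrt$ is linear, uniqueness is equivalent to injectivity, so it is enough to show that any continuous $f\colon\pud\to\C$ with $\xrt f=0$ must vanish identically. The strategy is to decompose $f$ into angular Fourier modes, observe that $\xrt$ acts diagonally with respect to this decomposition, and then use injectivity of the Abel transforms to conclude that each mode is zero.

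Concretely, I would first write the angular Fourier series
\begin{equation}
f(r,\theta)=\sum_{k\in\Z}f_k(r,\theta),\qquad f_k(r,\theta)=a_k(r)e^{ik\theta},
\end{equation}
with $a_k\colon(0,1]\to\C$ continuous as in~\eqref{eq:ang-f-component}. By exercise~\ref{ex:ang-fs}, showing $f=0$ is equivalent to showing $a_k=0$ for every $k\in\Z$, so this is what I will aim for.

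Next I would extract the $k$th angular Fourier coefficient of $\xrt f$. Lemma~\ref{lma:ang-fs-xrt} says
\begin{equation}
\frac1{2\pi}\int_0^{2\pi}e^{-ik\theta}\xrt f(r,\theta)\dd\theta=\xrt f_k(r,0),
\end{equation}
and the calculation carried out in~\eqref{eq:vv5} together with definition~\ref{def:abel} identifies this with $\A_k a_k(r)$ (up to the harmless factor $e^{ik\cdot 0}=1$). The hypothesis $\xrt f=0$ therefore forces $\A_k a_k=0$ for every $k\in\Z$.

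Finally, I would apply lemma~\ref{lma:abel}: each generalized Abel transform $\A_k\colon C((0,1])\to C((0,1])$ is injective (with the explicit left inverse~\eqref{eq:abel-inv}), so $\A_k a_k=0$ yields $a_k=0$. Since this holds for every $k\in\Z$, the equivalence in exercise~\ref{ex:ang-fs} gives $f=0$, completing the proof. No step here is really the main obstacle anymore; the substantive work has already been done in establishing the block-diagonal structure (lemma~\ref{lma:ang-fs-xrt}) and the injectivity of the Abel transforms (lemma~\ref{lma:abel}). The remaining task is genuinely just bookkeeping, though one should note the mild subtlety that the theorem as stated allows lines through the origin, which are harmlessly discarded since they form a single family parametrized by $\theta$ and the data on the complementary set of lines is already sufficient.
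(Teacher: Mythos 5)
Your proof is correct and is exactly the assembly the paper intends: angular Fourier decomposition, the block-diagonal action of $\xrt$ via lemma~\ref{lma:ang-fs-xrt} and the identification of the diagonal blocks with the generalized Abel transforms~$\A_k$, then injectivity of~$\A_k$ from lemma~\ref{lma:abel} and exercise~\ref{ex:ang-fs} to conclude $f=0$. The paper leaves this assembly as an exercise, and your write-up (including the remark about discarding lines through the origin) matches it.
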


\begin{ex}
Summarize the proof of the theorem in your own words.
Refer to the key steps (equations, lemmas, exercises, or other).
\end{ex}

Observe that no regularity assumption was made at the origin.
Singularities at the origin do not matter.

\begin{bex}
One can also go the other way around.
Assume that theorem~\ref{xrtthm:cormack} is true (this has been proved with other methods in section~\ref{sec:torus}).
Use the tools developed in this and the previous section to prove that the generalized Abel transforms~$\A_k$ are injective.

Opportunities like this arise often when we have multiple methods of proof.
\end{bex}

\subsection{Helgason's support theorem}

In fact, even more is true than theorem~\ref{xrtthm:cormack}.

\begin{proposition}
\label{prop:helgason}
Let $R\in(0,1)$.
If a continuous function $f\colon\bar D\to\C$ integrates to zero over all lines with distance${}>R$ to the origin, then $f(x)=0$ when $\abs{x}>R$.
(See figure~\ref{fig:disk-helgason} for an illustration.)
\end{proposition}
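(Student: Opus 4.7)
The plan is to combine the angular Fourier decomposition of section~\ref{sec:ang-fs} with a \emph{local} version of the inversion formula for the generalized Abel transforms $\A_k$. Writing $f(r,\theta)=\sum_{k\in\Z}a_k(r)e^{ik\theta}$, lemma~\ref{lma:ang-fs-xrt} together with the computation that led to definition~\ref{def:abel} gives
\[
\frac1{2\pi}\int_0^{2\pi}e^{-ik\theta}\xrt f(s,\theta)\dd\theta
=
\A_k a_k(s)
\]
for every $s\in(0,1)$ and every $k\in\Z$. The hypothesis that $\xrt f$ vanishes on every line at distance strictly greater than $R$ from the origin therefore translates, one Fourier mode at a time, into the statement $\A_k a_k(s)=0$ for all $s\in(R,1)$ and all $k\in\Z$.

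The heart of the argument is then a local injectivity claim for the Abel transform: if $h\in C((0,1])$ satisfies $\A_k h(s)=0$ for all $s\in(R,1)$, then $h(r)=0$ for all $r\in(R,1)$. I would reread the proof of lemma~\ref{lma:abel}, where the Fubini manipulation established
\[
\int_r^1 \A_k h(s)\,\frac{T_k(s/r)}{s\sqrt{(s/r)^2-1}}\dd s
=
\pi\int_r^1 h(t)\dd t
\]
for every continuous $h$ on $(0,1]$ and every $r\in(0,1)$. The crucial observation is that, for fixed $r>R$, the integral on the left only involves values of $\A_k h$ on the subinterval $(r,1)\subset(R,1)$ and hence vanishes by hypothesis. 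Consequently $\int_r^1 h(t)\dd t=0$ for every $r\in(R,1)$, and differentiating via the fundamental theorem of calculus gives $h(r)=0$ on $(R,1)$.

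Applying this to $h=a_k$ yields $a_k(r)=0$ for every $r\in(R,1)$ and every $k\in\Z$; continuity of $a_k$ on $(0,1]$ extends this up to $r=1$ as well. The angular Fourier uniqueness statement of exercise~\ref{ex:ang-fs}, applied to $f$ restricted to the annulus $\{x\in\R^2;R<\abs{x}\le1\}\subset\pud$, then forces $f(x)=0$ whenever $R<\abs{x}\le1$, which is the desired conclusion.

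The main conceptual point — and the only place where the support hypothesis is really used — is the \emph{locality} of the Fubini identity above: because the outer integration in $s$ starts at $r$ rather than at some point below $R$, no information about $\A_k h$ outside $(R,1)$ ever enters the computation. I expect this locality to be immediate once the existing proof of lemma~\ref{lma:abel} is reread with this perspective in mind, so no genuinely new analytical work should be required beyond the bookkeeping above.
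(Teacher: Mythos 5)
Your proof is correct and follows essentially the same route as the paper: reduce to $\A_k a_k=0$ on $(R,1)$ mode by mode, then invoke a local injectivity statement for the generalized Abel transform. In fact you supply slightly more than the paper does, since the paper merely asserts that the inversion formula ``is also valid for this case,'' whereas you justify it explicitly by observing that the Fubini identity $\int_r^1\A_kh(s)\frac{T_k(s/r)}{s\sqrt{(s/r)^2-1}}\dd s=\pi\int_r^1h(t)\dd t$ only samples $\A_kh$ on $(r,1)$.
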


\begin{proof}
By assumption $\xrt f(r,\theta)=0$ whenever $r>R$, so by taking the Fourier transform in~$\theta$ we get $\A_ka_k(s)=0$ for all $s>R$.

The inversion formula for the generalized Abel transform~$\A_k$ is also valid for this case:
If $h\colon(0,1]\to\C$ is continuous and $\A_kh(r)=0$ for all $r\in(R,1]$, then $h(s)=0$ for $s\in(R,1]$.
(This can be verified by inspecting the proof of our injectivity theorem for Abel transforms.)
Therefore $a_k(s)=0$ for all $s>R$ and $k\in\Z$, and so $f(s,\phi)=0$ for all $s>R$ and $\phi\in\R/2\pi\Z$, as claimed.
\end{proof}

\begin{figure}[t]
    \centering
    \includegraphics[scale=0.35,trim={3cm 0 3cm 0},clip,page=16]{xrt-figs1c.pdf}
    \caption{An illustration of proposition~\ref{prop:helgason}. \viher{The disk of radius~$R$} is an obstacle and \puna{lines passing through it are not included in the data}. We only have the integrals over \sini{lines that do not meet the obstacle} at our disposal.}
    \label{fig:disk-helgason}
\end{figure}

One way to interpret the situation is to see the Abel transform as an upper triangular matrix.
For an upper triangular matrix~$A$ the component~$(Av)_i$ only depends on the components~$v_j$ for $j\geq i$.
Examining the Abel transform reveals that~$\A_kh(s)$ only depends on the values of~$h(r)$ for $r\geq s$.
The inverse of an upper triangular matrix is upper triangular, and indeed our inversion formula has the same type of dependence.
A diagonal submatrix of an invertible upper triangular matrix is an invertible upper triangular matrix.

We may consider the disc~$\bar D(0,R)$ to be an obstacle.
A sufficiently nice function is uniquely determined outside the obstacle by its integrals over all lines that avoid the obstacle.
Of course, nothing can be said about the function inside the obstacle from this data.

Results of this kind are often called support theorems for the X-ray transform.
From a more physical point of view, this is a matter of exterior tomography --- there are actual physical obstacles in the real world that one cannot fire X-rays through.

One of the most famous support theorems is due to Sigur\dh{}ur Helgason.
We present a variant of the two-dimensional version.

\begin{theorem}[Helgason's support theorem in the plane]
\label{thm:helgason}
Let $K\subset\R^2$ be a compact and convex set.
Suppose $f\in C_c(\R^2)$ integrates to zero over all lines $L\subset\R^2$ for which $L\cap K=\emptyset$.
Then $f|_{\R^2\setminus K}=0$.
\end{theorem}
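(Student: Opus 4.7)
The plan is to deduce the theorem from the already-proved disc version, Proposition~\ref{prop:helgason}, by showing that for each $x_0 \in \R^2 \setminus K$ one can sandwich $K$ inside a closed disc that avoids $x_0$, then invoke Proposition~\ref{prop:helgason} after an affine change of variables sending that disc to the unit disc. Since the desired conclusion $f|_{\R^2 \setminus K} = 0$ is pointwise, it suffices to fix one $x_0 \notin K$ and show $f(x_0) = 0$.

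The geometric heart of the argument is the following construction, which is where convexity of $K$ is used essentially. Let $p \in K$ be a nearest point to $x_0$, set $d = \abs{x_0 - p} > 0$, and $v = (x_0 - p)/d$. By convexity of $K$ and the optimality of $p$, every $y \in K$ satisfies $(y - p) \cdot v \leq 0$. Place the candidate center at $c = p - Rv$ for a large $R > 0$ to be chosen. A direct expansion gives
\begin{equation}
\abs{y - c}^2 = \abs{y - p}^2 + 2R(y - p)\cdot v + R^2 \leq R^2 + \operatorname{diam}(K)^2
\end{equation}
for every $y \in K$, whereas $\abs{x_0 - c} = R + d$. Since $\sqrt{R^2 + \operatorname{diam}(K)^2} < R + d$ for all sufficiently large $R$, there is a radius $\rho_1$ with $K \subset \bar D(c, \rho_1)$ and $x_0 \notin \bar D(c, \rho_1)$. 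Pick then $\rho_2 > \rho_1$ with $\spt(f) \subset \bar D(c, \rho_2)$; this is possible as $\spt(f)$ is compact.

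Now rescale via the affine map $\Phi(\tilde y) = c + \rho_2 \tilde y$ and set $\tilde f = f \circ \Phi \in C(\bar D)$. Then $\Phi^{-1}(K) \subset \bar D(0, \rho_1/\rho_2)$ and $\tilde x_0 \coloneqq \Phi^{-1}(x_0)$ satisfies $\abs{\tilde x_0} > \rho_1/\rho_2$. The map $\Phi$ sends lines to lines and scales distance from $c$ by $1/\rho_2$, so any line at distance greater than $\rho_1/\rho_2$ from the origin pulls back to a line at distance greater than $\rho_1$ from $c$, hence disjoint from $K$. Over such lines $f$ integrates to zero by hypothesis, and consequently so does $\tilde f$ (up to the overall constant Jacobian factor $\rho_2$ relating the two arclength measures, which does not affect vanishing). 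Proposition~\ref{prop:helgason} applied with $R \coloneqq \rho_1/\rho_2 \in (0,1)$ now gives $\tilde f(\tilde x) = 0$ whenever $\abs{\tilde x} > \rho_1/\rho_2$; in particular $\tilde f(\tilde x_0) = 0$, i.e.\ $f(x_0) = 0$.

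The main obstacle is precisely the geometric step: producing an enclosing disc that avoids a prescribed exterior point. This is where convexity of $K$ enters in an essential way, via the supporting-halfplane property at the nearest point $p$. Without convexity the nearest-point half-plane estimate fails, and there is in general no reason that $K$ can be enclosed in a disc disjoint from a given exterior point. The remainder of the proof is a routine affine rescaling followed by invocation of the known exterior-support theorem for the disc.
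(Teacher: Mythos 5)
Your proof is correct and follows essentially the route the paper intends (via exercise~\ref{ex:convex-intersection}): enclose $K$ in a closed disc avoiding the given exterior point, rescale to the unit disc, and invoke proposition~\ref{prop:helgason}. Your explicit nearest-point construction of the enclosing disc (using the variational inequality $(y-p)\cdot v\leq 0$ and a large radius) is just a concrete way of carrying out the ``$K$ is the intersection of all closed discs containing it'' step that the paper's hint phrases in terms of a separating line, and the remaining details (choosing $\rho_2$ so that $\spt f$ lies in the big disc, the Jacobian factor) are handled correctly.
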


An alternative way to formulate the conclusion $f|_{\R^2\setminus K}=0$ is to say $\spt(f)\subset K$, whence the name ``support theorem''.
It is only natural that no conclusion is drawn in the set~$K$ because none of the lines available to us meet it.

\begin{ex}
\label{ex:convex-intersection}
Argue that a compact and convex planar set is the intersection of all closed discs containing it.
Figure~\ref{fig:helgason} might be helpful.
Then prove theorem~\ref{thm:helgason} using proposition~\ref{prop:helgason}.
(You may use the result that states that a compact convex set and a point outside it can be separated by a line which is disjoint from both the point and the set.)
\end{ex}

\begin{figure}[t]
    \centering
    \includegraphics[scale=0.35,trim={3cm 0 3cm 0},clip,page=15]{xrt-figs1c.pdf}
    \caption{A \sini{compact and convex set} is the intersection of \viher{all disks containing it}. Therefore \puna{the union of the complements of the disks} is the complement of \sini{the original set}.}
    \label{fig:helgason}
\end{figure}

\begin{ex}
Explain why Helgason's support theorem (often) fails if the compact set~$K$ is not convex.
Also, what does the support theorem say if $K=\emptyset$?
\end{ex}

If~$K$ is not compact, the support theorem can fail.
For example, if~$K$ is a closed half plane, then the data only contains integrals parallel to~$\partial K$, which is certainly insufficient.

If a set is ``almost convex'', then Helgason's support theorem can still work.
For example, if~$K$ is the union of a closed ball and a point, the theorem is still valid as stated.
This is because, in some sense, a single point is removable; the missing lines can be approximated by the available ones.

\qa

\section{Inversion by circular averages}
\label{sec:radon}

In this section we will give our third injectivity proof by (an adaptation of) the historically first method due to Johann Radon in 1917.

\subsection{The X-ray transform and circular averages}

We will reconstruct a function in~$C_c(\R^2)$ from its line integral with Radon's method.
It is closely related to the previous one using the angular Fourier series as we shall see.
Our notation follows mainly that of Radon's original work, but we have made some adjustments.

The circular average of~$f$ over the circle centered at $x\in\R^2$ with radius $r>0$ is
\begin{equation}
\label{eq:r-fbar}
\bar f_x(r)
=
\frac1{2\pi}
\int_0^{2\pi}
f(x_1+r\cos(\theta),x_2+r\sin(\theta))\dd\theta.
\end{equation}

\begin{ex}
Fix any $r\in\R$ and $\theta\in\R/2\pi\Z=\T^1$.
Consider the curve $\gamma_{r,\theta}\colon\R\to\R^2$ given by
\begin{equation}
\gamma_{r,\theta}(t)
=
(r\cos(\theta)-t\sin(\theta),r\sin(\theta)+t\cos(\theta)).
\end{equation}
Show that its closest point to the origin is at $(r\cos(\theta),r\sin(\theta))$, that $\abs{\dot\gamma(t)}=1$ for all $t\in\R$, and that $\gamma(\R)=\{x\in\R^2;x_1\cos(\theta)+x_2\sin(\theta)=r\}$.

These are explicit parametrizations of the lines used in section~\ref{sec:ang-fs}, now in terms of Cartesian coordinates.

This exercise is related to figure~\ref{fig:psi-coordinates}.
\end{ex}

Using these~$r$ and~$\theta$ we can parametrize all the lines in the plane, including the ones going through the origin.
There is a two-fold redundancy as exercise~\ref{ex:r-flip} shows.

We define the X-ray transform of $f\in C_c(\R^2)$ as $\xrt f\colon\R\times\T^1\to\R$ given by the formula
\begin{equation}
\xrt f(r,\theta)
=
\int_\R f(\gamma_{r,\theta}(t))\dd t.
\end{equation}

\begin{ex}
Explain why~$\xrt f$ is bounded and continuous when $f\in C_c(\R^2)$.
This way of writing the X-ray transform is similar but not identical to the one of exercise~\ref{ex:xrt-def}.
\end{ex}

\begin{ex}
\label{ex:r-flip}
Show that $\xrt f(r,\theta)=\xrt f(-r,\theta+\pi)$.
How do the curves~$\gamma_{r,\theta}$ and~$\gamma_{-r,\theta+\pi}$ differ?
\end{ex}

We will also define a circular average of the X-ray transform~$\xrt f$.
The average over the circle with center $x\in\R^2$ and radius $r>0$ is defined to be
\begin{equation}
\overline{\xrt f}_x(r)
=
\frac1{2\pi}
\int_0^{2\pi}
\xrt f(x_1\cos(\theta)+x_2\sin(\theta)+r,\theta)\dd\theta.
\end{equation}
We will verify in exercise~\ref{ex:circle-tangent-integral} that this formula is geometrically correct; this is really the average over all the lines tangent to the said circle.

The two circular averages are illustrated in figure~\ref{fig:circular-averages}.

\begin{figure}[t]
    \centering
    \includegraphics[scale=0.4,trim={3cm 0 3cm 0},clip,page=17]{xrt-figs1c.pdf}
    \caption{Two kinds of averages on the circle with \viher{center $x$ and radius $r$}: \puna{the average~$\bar f_x(r)$ of the (unknown) function~$f$} and \sini{the average~$\overline{\xrt f}_x(r)$ of the (known) data~$\xrt f$}.}
    \label{fig:circular-averages}
\end{figure}

\begin{ex}
\label{ex:circle-tangent-integral}
Consider the circle with center~$x$ and radius~$r$.
Take any angle $\theta\in\R/2\pi\Z$.
Consider the point~$z$ on the circle where the exterior unit normal vector is $(\cos(\theta),\sin(\theta))$.
Let~$L$ be the line tangent to the circle at~$z$.
Show that the integral of~$f$ over~$L$ is
\begin{equation}
\xrt f(x_1\cos(\theta)+x_2\sin(\theta)+r,\theta).
\end{equation}
Draw a picture to illustrate the situation.
\end{ex}

We will reconstruct~$f$ from~$\xrt f$ via~$\overline{\xrt f}$.

\subsection{Reduction to the Abel transform}

The key of the proof is the following integral identity.

\begin{lemma}
\label{lma:radon-identity}
If $f\in C_c(\R^2)$, $x\in\R^2$, and $r>0$, then
\begin{equation}
\overline{\xrt f}_x(r)
=
2\int_r^\infty\frac{\bar f_x(s)s}{\sqrt{s^2-r^2}}\dd s.
\end{equation}
\end{lemma}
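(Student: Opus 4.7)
The plan is to convert the double integral defining $\overline{\xrt f}_x(r)$ from ``line coordinates'' $(\theta,t)$ into polar coordinates $(s,\alpha)$ centered at $x$, so that the angular integral collapses into the circular average $\bar f_x$.

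First I would unfold the definitions. Substituting the formula for $\xrt f$ with first argument $\rho=x_1\cos\theta+x_2\sin\theta+r$ gives
\begin{equation}
\overline{\xrt f}_x(r)
=
\frac{1}{2\pi}\int_0^{2\pi}\int_{-\infty}^\infty f(y(\theta,t))\dd t\dd\theta,
\end{equation}
where $y(\theta,t)=x+r(\cos\theta,\sin\theta)+t(-\sin\theta,\cos\theta)$ parametrizes the line tangent to the circle $\{z\in\R^2;\abs{z-x}=r\}$ at the point whose outward unit normal is $(\cos\theta,\sin\theta)$.

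Next I would analyze the map $\Phi\colon(\theta,t)\mapsto y$. A short determinant computation gives $\det D\Phi=-t$, so $\abs{J\Phi}=\abs{t}$. Geometrically, $\Phi$ is two-to-one onto the exterior region $\{y\in\R^2;\abs{y-x}>r\}$, because through each external point $y$ there pass exactly two lines tangent to the circle of radius $r$ around $x$; at either preimage we have $\abs{t}=\sqrt{\abs{y-x}^2-r^2}$, a well-defined function of $y$. Inserting $1=\abs{t}/\abs{t}$ and applying the area formula yields
\begin{equation}
\overline{\xrt f}_x(r)
=
\frac{1}{\pi}\int_{\abs{y-x}>r}\frac{f(y)}{\sqrt{\abs{y-x}^2-r^2}}\dd y.
\end{equation}
Switching to polar coordinates around $x$, setting $y=x+s(\cos\alpha,\sin\alpha)$ with $\dd y=s\dd s\dd\alpha$, and recognizing the angular integral as $2\pi\bar f_x(s)$ via~\eqref{eq:r-fbar}, this produces
\begin{equation}
\overline{\xrt f}_x(r)
=
\frac{1}{\pi}\int_r^\infty\frac{s}{\sqrt{s^2-r^2}}\cdot 2\pi\bar f_x(s)\dd s
=
2\int_r^\infty\frac{\bar f_x(s)s}{\sqrt{s^2-r^2}}\dd s,
\end{equation}
which is the claimed identity.

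The main obstacle is handling the change of variables cleanly: one must verify the two-to-one multiplicity, compute the Jacobian, and observe that the singular locus $\{t=0\}$ (where $\Phi$ fails to be an immersion and the Jacobian vanishes) maps onto the circle $\abs{y-x}=r$, a set of planar measure zero, so contributes nothing. Since $f\in C_c(\R^2)$, all integrals are absolutely convergent and Fubini applies freely throughout.
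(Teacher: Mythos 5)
Your proposal is correct and follows essentially the same route as the paper's proof: unfold the definitions, insert $1=\abs{t}/\abs{t}$, change variables via the tangent-line parametrization (Jacobian $\pm t$) to an integral over the exterior of the disc $\{ \abs{y-x}>r\}$, and finish with polar coordinates to recover $\bar f_x$. The only cosmetic difference is that the paper first translates to $x=0$ and restricts to $t\geq0$ to make the map a bijection, whereas you keep general $x$ and account for the two-to-one multiplicity directly; these are equivalent bookkeeping choices.
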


\begin{proof}
By translation invariance we may assume that $x=0$ (exercise~\ref{ex:radon-translation-invariance}).
Consider $r>0$ fixed.

Define $\psi_r\colon[0,\infty)\times\T^1\to\R^2\setminus D(0,r)$ by
\begin{equation}
\label{eq:r-psi}
\psi_r(t,\theta)
=
(r\cos(\theta)-t\sin(\theta),r\sin(\theta)+t\cos(\theta))
=
\gamma_{r,\theta}(t).
\end{equation}
This is a diffeomorphism and the Jacobian determinant is simply~$t$ (exercise~\ref{ex:radon-jacobian}).
The map is illustrated in figure~\ref{fig:psi-coordinates}.

\begin{figure}[t]
    \centering
    \includegraphics[scale=0.5,trim={3cm 0 3cm 0},clip,page=18]{xrt-figs1c.pdf}
    \caption{The map~$\psi_r$ of~\eqref{eq:r-psi} maps to the complement of \viher{the disk with center~$x$ and radius~$r$}. The \puna{curves of constant $\theta$ are half lines tangent to the circle}. This corresponds to $t\geq0$; the map is well defined for $t<0$ as well, but then the lines intersect and the map fails to be bijective. The \sini{curves of constant $t$ are circles centered at~$x$ with radius $\sqrt{r^2+t^2}$}. Bijectivity of the map is (hopefully) visually clear from the picture, and the reader gnawed by doubt may verify it algebraically.}
    \label{fig:psi-coordinates}
\end{figure}

A computation gives
\begin{equation}
\label{eq:r-id-calc}
\begin{split}
\overline{\xrt f}_0(r)
&\stackrel{\text{a}}{=}
\frac1{2\pi}
\int_0^{2\pi}
\xrt f(r,\theta)\dd\theta
\\&\stackrel{\text{b}}{=}
\frac1{2\pi}
\int_0^{2\pi}
\int_\R
f(r\cos(\theta)-t\sin(\theta),r\sin(\theta)+t\cos(\theta))
\dd t
\dd\theta
\\&\stackrel{\text{c}}{=}
\frac1{\pi}
\int_{\T^1}
\int_0^\infty
f(r\cos(\theta)-t\sin(\theta),r\sin(\theta)+t\cos(\theta))
t^{-1}
t
\dd t
\dd\theta
\\&\stackrel{\text{d}}{=}
\frac1{\pi}
\int_{[0,\infty)\times\T^1}
f(\psi_r(t,\theta))
\left(\abs{\psi_r(t,\theta)}^2-r^2\right)^{-1/2}
t
\dd t
\dd\theta
\\&\stackrel{\text{e}}{=}
\frac1{\pi}
\int_{\R^2\setminus D(0,r)}
f(x)
\left(\abs{x}^2-r^2\right)^{-1/2}
\dd x
\\&\stackrel{\text{f}}{=}
\frac1{\pi}
\int_r^\infty
\left(
\int_0^{2\pi}
\frac{f(s\cos(\theta),s\sin(\theta))}{\sqrt{s^2-r^2}}
\dd\theta
\right)
s\dd s
\\&\stackrel{\text{g}}{=}
2
\int_r^\infty
\frac{\bar f_0(s)}{\sqrt{s^2-r^2}}
s\dd s.
\end{split}
\end{equation}
This is the claimed identity for $x=0$.
\end{proof}

\begin{ex}
\label{ex:radon-translation-invariance}
Fix any $a\in\R^n$ and denote by $T_a\colon C_c(\R^2)\to C_c(\R^2)$ the translation operator defined by $T_af(x)=f(x+a)$.
Explain geometrically why
\begin{equation}
\bar f_x(r)
=
\overline{T_xf}_0(r)
\end{equation}
and
\begin{equation}
\overline{\xrt f}_x(r)
=
\overline{\xrt T_xf}_0(r).
\end{equation}
This means that the statement of lemma~\ref{lma:radon-identity} can be formulated in terms of shifted functions while keeping all the circles centered at the origin.
\end{ex}

\begin{ex}
\label{ex:radon-jacobian}
Show that the Jacobian determinant $\det(D\psi_r(t,\theta))$ of~$\psi_r$ is~$t$.
\end{ex}

\begin{ex}
Explain briefly what happened in the steps a--g of~\eqref{eq:r-id-calc}.
\end{ex}

We define the Abel transform of a compactly supported continuous function $h\colon[0,\infty)\to\R$ to be $\A h\colon(0,\infty)\to\R$ given by
\begin{equation}
\A h(r)
=
2\int_r^\infty \frac{h(s)s}{\sqrt{s^2-r^2}}\dd s.
\end{equation}
With the help of this notation we can rewrite lemma~\ref{lma:radon-identity} as
\begin{equation}
\overline{\xrt f}_x(r)
=
\A\bar f_x(r).
\end{equation}
Comparing to~\eqref{eq:Ak-def}, we see that in fact $\A=\A_0$ apart from the upper limit of integration.
As long as this limit is finite --- as it is due to the compact support of~$h$ --- we may use the same inversion formula~\eqref{eq:abel-inv} to invert~$\A$.
We record the relevant result here without proof; the proof is the same as that of lemma~\ref{lma:abel}.

\begin{lemma}
\label{lma:abel-infty}
The generalized Abel transform $\A_k\colon C_c((0,\infty))\to C_c((0,\infty))$ is an injection.
Moreover, $h\in C_c((0,\infty))$ can be calculated from~$\A_k h$ via
\begin{equation}
h(r)
=
-\frac1\pi\frac{\der}{\der r}
\int_r^\infty\A_kh(s)\frac{T_k(s/r)}{s\sqrt{(s/r)^2-1}}\dd s.
\end{equation}
\end{lemma}

We only proved injectivity of~$\A_k$ for $k=0$ and $k=\pm1$ by hand, and here we only need the special case $k=0$.
We have
\begin{equation}
\label{eq:radon-inv-xr}
\bar f_x(r)
=
-\frac1\pi\frac{\der}{\der r}
\int_r^\infty\frac{\overline{\xrt f}_x(s)}{s\sqrt{(s/r)^2-1}}\dd s.
\end{equation}
for all $x\in\R^2$ and $r>0$.

\begin{ex}
Prove that $\lim_{r\to0}\bar f_x(r)=f(x)$ when $f\colon\R^2\to\R$ is continuous.
\end{ex}

This little observation together with the identity~\eqref{eq:radon-inv-xr} shows that~$\xrt f$ determines~$f(x)$ for all~$x$ and therefore proves the desired injectivity result.
However, the formula becomes more useful when we actually calculate the limit, and this we shall do next.

\subsection{An explicit inversion formula}

To find the explicit inversion formula without requiring too many tools, we make the additional assumption that $f\in C^2_c(\R^2)$.
Fix any point $x\in\R^2$.
Let us denote $F(r)=\overline{\xrt f}_x(r)$.
It follows from this regularity assumption that $F\in C^2_c(\R)$.
Notice that the same formula can be used to define~$F(r)$ for all $r\in\R$, not only $r>0$.

If we denote
\begin{equation}
G(r)
=
\int_r^\infty\frac{F(s)}{s\sqrt{(s/r)^2-1}}\dd s,
\end{equation}
then the problem is to find
\begin{equation}
f(x)=-\frac1\pi\lim_{r\to0}G'(r).
\end{equation}
Let us first find a new formula for~$G'(r)$ when $r>0$.

If all things are regular enough, the derivative of
\begin{equation}
\Phi(x)
=
\int_x^\infty K(x,y)\phi(y)\dd y
\end{equation}
is
\begin{equation}
\Phi'(x)
=
-K(x,x)\phi(x)
+\int_x^\infty \partial_1K(x,y)\phi(y)\dd y.
\end{equation}
But in the case of the Abel transform and our function~$G(r)$ neither term makes sense: the integral diverges and the diagonal values~$K(x,x)$ look infinite.
A more elaborate differentiation scheme is needed, and these two singularities will eventually cancel each other.

After the change of variable from $s\in(r,\infty)$ to $z=\sqrt{s^2-r^2}\in(0,\infty)$, we get
\begin{equation}
\label{eq:r-vv1}
G(r)
=
\int_0^\infty\frac{rF(\sqrt{z^2+r^2})}{z^2+r^2}\dd z.
\end{equation}
The limits no longer depend on~$r$, and that helps against the trouble outlined in the previous paragraph.

\begin{ex}
Make this change of variable and verify the formula above for~$G(r)$.
\end{ex}

When $r>0$, we may easily differentiate under the integral sign, and we obtain
\begin{equation}
\label{eq:r-vv2}
\begin{split}
G'(r)
&=
\int_0^\infty
\Bigg(
F'(\sqrt{z^2+r^2})\frac{r^2}{(z^2+r^2)^{3/2}}
\\&\qquad
+
F(\sqrt{z^2+r^2})\frac{z^2-r^2}{(z^2+r^2)^{2}}
\Bigg)\dd z.
\end{split}
\end{equation}
Integrating by parts in the second term gives
\begin{equation}
\label{eq:r-vv3}
G'(r)
=
\int_0^\infty
\frac{F'(\sqrt{z^2+r^2})}{\sqrt{z^2+r^2}}
\dd z.
\end{equation}

\begin{ex}
Justify the steps from~\eqref{eq:r-vv1} to~\eqref{eq:r-vv2} and~\eqref{eq:r-vv3}.
\end{ex}

To study the limit, we need some regularity estimates.
Due to the symmetry property (see exercise~\ref{ex:r-flip}) of $\xrt f(r,\theta)$, we have $F(r)=F(-r)$.
Since $F\in C^2_c(\R)$, it then follows that $F'(0)=0$ and
\begin{equation}
\abs{F'(x)-F'(y)}
\leq
C\abs{x-y}
\end{equation}
for some constant~$C$.
We may in fact choose simply $C=\max_{x\in\R}\abs{F''(x)}$.
By letting $y=0$ it follows that $\abs{F'(x)}\leq C\abs{x}$.
These observations will help us study the integral in~\eqref{eq:r-vv3}.

\begin{ex}
Show that if $F\colon\R\to\R$ satisfies $F(x)=F(-x)$ for all $x\in\R$ and is differentiable at the origin, then $F'(0)=0$.
\end{ex}

The natural guess is that the limit $\lim_{r\to0}G'(r)$ would be
\begin{equation}
L
=
\int_0^\infty
\frac{F'(z)}{z}
\dd z.
\end{equation}
Notice that since $\abs{F'(z)}\leq C\abs{z}$ and~$F'$ is compactly supported and continuous, the integral~$L$ exists.
We have
\begin{equation}
\begin{split}
\abs{G'(r)-L}
&=
\abs{
\int_0^\infty
\left(
\frac{F'(\sqrt{z^2+r^2})}{\sqrt{z^2+r^2}}
-
\frac{F'(z)}{z}
\right)
\dd z
}
\\&=
\Bigg\lvert
\int_0^\infty
\Bigg(
\frac{F'(\sqrt{z^2+r^2})-F'(z)}{\sqrt{z^2+r^2}}
\\&\qquad+
F'(z)
\left(\frac1{\sqrt{z^2+r^2}}-\frac1z\right)
\Bigg)
\dd z
\Bigg\rvert
\\&\leq
\int_0^\infty
\Bigg(
\frac{\abs{F'(\sqrt{z^2+r^2})-F'(z)}}{\sqrt{z^2+r^2}}
\\&\qquad+
\abs{F'(z)}
\abs{\frac{z-\sqrt{z^2+r^2}}{z\sqrt{z^2+r^2}}}
\Bigg)
\dd z
\\&\leq
\int_0^\infty
\left(
\frac{C\abs{\sqrt{z^2+r^2}-z}}{\sqrt{z^2+r^2}}
+
Cz
\frac{\sqrt{z^2+r^2}-z}{z\sqrt{z^2+r^2}}
\right)
\dd z
\\&=
2C
\int_0^\infty
\frac{\sqrt{z^2+r^2}-z}{\sqrt{z^2+r^2}}
\dd z
\\&=
2C
\left[
z-\sqrt{z^2+r^2}
\right]_0^\infty
\\&=
2Cr.
\end{split}
\end{equation}
Therefore we have proved that $G'(r)\to L$ as $r\to0$.

Let us collect our findings into a theorem:

\begin{theorem}
\label{xrtthm:radon}
A function $f\in C_c(\R^2)$ is uniquely determined by its X-ray transform.
Moreover, if $f\in C^2_c(\R^2)$, it can be reconstructed pointwise by
\begin{equation}
f(x)
=
-\frac1\pi
\int_0^\infty
r^{-1}\frac{\der}{\der r}\overline{\xrt f}_x(r)
\dd r.
\end{equation}
\end{theorem}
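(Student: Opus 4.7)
The plan is to assemble the pieces already developed in this section. The key identity is Lemma~\ref{lma:radon-identity}, which recognizes $\overline{\xrt f}_x(r)$ as the Abel transform of the circular average $\bar f_x(r)$. Since $f$ is compactly supported, so is $\bar f_x$, and the inversion formula~\eqref{eq:abel-inv} from Lemma~\ref{lma:abel} (applied with $k=0$, so $T_0\equiv1$, and with the upper limit pushed from $1$ to $\infty$, which is harmless by compact support) recovers $\bar f_x$ from $\overline{\xrt f}_x$. This is precisely identity~\eqref{eq:radon-inv-xr}. The qualitative half of the theorem then follows immediately: continuity of $f$ gives $\bar f_x(r)\to f(x)$ as $r\to 0^+$, so $f(x)$ is determined pointwise by $\xrt f$.

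For the explicit formula under the stronger assumption $f\in C^2_c(\R^2)$, I would fix $x$, set $F(r)=\overline{\xrt f}_x(r)\in C^2_c(\R)$, and write
\begin{equation*}
G(r)=\int_r^\infty\frac{F(s)}{s\sqrt{(s/r)^2-1}}\dd s,
\end{equation*}
so that $f(x)=-\pi^{-1}\lim_{r\to 0^+}G'(r)$. To make this limit tractable, the natural sequence of moves is: first change variables $s=\sqrt{z^2+r^2}$ to push all of the $r$-dependence inside the integrand, next differentiate under the integral sign, and then integrate by parts to cancel the awkward $(z^2-r^2)/(z^2+r^2)^2$ term produced by the derivative. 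This yields the clean expression
\begin{equation*}
G'(r)=\int_0^\infty\frac{F'(\sqrt{z^2+r^2})}{\sqrt{z^2+r^2}}\dd z,
\end{equation*}
whose formal $r=0$ value is $L=\int_0^\infty F'(z)/z\dd z$.

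The main obstacle, and the only genuinely delicate step, is justifying that $G'(r)\to L$ as $r\to 0^+$. The candidate limit integral is improper at $z=0$ with a $1/z$ singularity, and it only exists if $F'$ vanishes to first order there. This is precisely where the flip symmetry $\xrt f(r,\theta)=\xrt f(-r,\theta+\pi)$ of Exercise~\ref{ex:r-flip} earns its keep: averaging over $\theta$ it forces $F(r)=F(-r)$, and combined with $F\in C^2$ this yields $F'(0)=0$ together with a Lipschitz bound $\abs{F'(z)}\leq C\abs{z}$ near the origin. Splitting the difference $F'(\sqrt{z^2+r^2})/\sqrt{z^2+r^2}-F'(z)/z$ into an increment of $F'$ and a difference of reciprocal square roots, and using compact support to control the tail, one estimates $\abs{G'(r)-L}\leq 2Cr$, so $G'(r)\to L$. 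Substituting back gives $f(x)=-\pi^{-1}L$, the claimed formula.
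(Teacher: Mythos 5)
Your proposal is correct and follows essentially the same route as the paper: Lemma~\ref{lma:radon-identity} plus the $k=0$ Abel inversion for uniqueness, then the change of variables, differentiation under the integral, integration by parts, and the $F(r)=F(-r)$ symmetry giving $\abs{F'(z)}\leq C\abs{z}$ to justify $G'(r)\to L$ with the $2Cr$ estimate. No gaps.
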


\begin{ex}
Summarize the proof of theorem~\ref{xrtthm:radon}.
\end{ex}

The reconstruction formula can also be written as a Stieltjes integral like Radon did:
\begin{equation}
f(x)
=
-\frac1\pi
\int_0^\infty
r^{-1}\der\overline{\xrt f}_x(r)
.
\end{equation}

\subsection{Relation to the angular Fourier series}

Let us now see how the methods of section~\ref{sec:ang-fs} are related to the idea of this section.
By translation invariance it suffices to show that~$\xrt f$ uniquely determines~$f(0)$.
We write the function $f(r,\theta)$ as a Fourier series in~$\theta$:
\begin{equation}
f(r,\theta)
=
\sum_{k\in\Z}a_k(r)e^{ik\theta}.
\end{equation}
We also write the X-ray transform as a Fourier series:
\begin{equation}
\xrt f(r,\theta)
=
\sum_{k\in\Z}b_k(r)e^{ik\theta}.
\end{equation}
As discussed in section~\ref{sec:ang-fs} the function~$b_k$ only depends on the function~$a_k$, and we found in section~\ref{sec:abel} that $b_k=\A_ka_k$.

Let us look at $k=0$.
Now $a_0(r)=\bar f_0(r)$ and $b_0(r)=\overline{\xrt f}_0(r)$.
It should be noted that~$a_0$ and~$b_0$ are indexed by $0\in\Z$ but~$\bar f_0(r)$ and~$\overline{\xrt f}_0(r)$ by $0\in\R^2$.
The two observations $b_0=\A_0a_0$ and $\overline{\xrt f}_0(r)=\A\bar f_0(r)$ are therefore the same.
The function~$a_0$ can be reconstructed from~$b_0$ by inverting the Abel transform.
Now $f(0)=\lim_{r\to0}a_0(r)$, which gives a reconstruction formula at the origin.

In conclusion, it is enough to look at the zeroth component of the angular Fourier series if one varies the origin of the polar coordinates.
The reconstruction works at any chosen origin.
The zeroth component of the angular Fourier series is nothing but the circular average.

\qa

\section{The geometry of Euclidean geodesics}
\label{sec:geod-geom}

In this section and the next two sections we will give our fourth injectivity proof based on analysis on the sphere bundle.

\subsection{The sphere bundle}

As silly as it sounds, we will now study the geometry of straight lines in the Euclidean space~$\R^n$.
These same geometrical ideas will remain valid and applicable on Riemannian manifolds.
We will restrict our attention to Euclidean spaces, but some differential geometric ideas will be involved, and we will have to consider a certain non-Euclidean space.

In this section we will study straight lines as curves parametrized by arc length.
This is most conveniently done on the sphere bundle
\begin{equation}
S\R^n
=
\R^n\times \Sphere^{n-1}
\end{equation}
of the Euclidean space~$\R^n$.
This is a bundle over~$\R^n$ and comes with the natural projection $\pi\colon S\R^n\to\R^n$ to the first component.
In the Euclidean setting the bundle is simply a product of two spaces and the bundle is trivial.
In a non-Euclidean situation the bundle structure becomes more complicated.

A point $(x,v)\in S\R^n$ describes a point and a velocity at that point.
The velocity variable~$v$ takes values in the fiber~$\Sphere^{n-1}$ of the bundle.
Parametrization by unit speed instead of arbitrary speed is very convenient, as it makes the fibers compact.
The usefulness becomes apparent when we integrate over a sphere bundle later on.

There are two kinds of directions on~$S\R^n$.
Directions in the~$\R^n$ component are called horizontal and those in~$\Sphere^{n-1}$ are called vertical.
This terminology will reappear in the next section when we consider horizontal and vertical derivatives of a function on the sphere bundle.
This choice of words correspond to the canonical way of drawing the base~$\R^n$ of the bundle horizontally and the fibers~$\Sphere^{n-1}$ vertically, as shown in figure~\ref{fig:bundle}.

\begin{figure}[t]
    \centering
    \includegraphics[scale=0.4,trim={3cm 0 3cm 0},clip,page=19]{xrt-figs1c.pdf}
    \caption{The sphere bundle~$S\R^n$ is the union of \viher{fibers} attached to each point on \sini{the base~$\R^n$}. Each fiber is a copy of~$\Sphere^{n-1}$, and we may call \puna{the sphere at~$x$ by the name~$\Sphere^{n-1}_x$} to be consistent with how bundles are usually denoted in differential geometry. It makes little difference in Euclidean geometry, though, as all fibers are very literally the same. Note that \sini{the base} is drawn horizontally and \viher{the fibers} are drawn vertically --- this will be important for nomenclature.}
    \label{fig:bundle}
\end{figure}

\subsection{The geodesic flow}

Straight lines are geodesics.
There is a dynamical system associated with geodesics, and we will examine it next.

\begin{definition}
\label{def:dyn-sys}
A continuous time dynamical system on a set~$Z$ is a function $\phi\colon\R\times Z\to Z$ which satisfies $\phi_0(z)=z$ and $\phi_s(\phi_t(z))=\phi_{s+t}(z)$ for all $z\in Z$ and $s,t\in\R$.
\end{definition}

A dynamical system describes the time evolution of a point in the phase space~$Z$.
Every $z\in Z$ has a unique trajectory or integral curve $t\mapsto\phi_t(z)$.
It is also possible to view a dynamical system more algebraically, as the action of the additive group~$\R$ on the set~$Z$.

\begin{ex}
\label{ex:ds1}
Which of the following are dynamical systems on~$\R$?
\begin{enumerate}[(a)]
\item $\phi_t(z)=z+3t$.
\item $\phi_t(z)=4$.
\item $\phi_t(z)=z^t$.
\item $\phi_t(z)=z-t^2$.
\item $\phi_t(z)=e^{-2t}z$.
\item $\phi_t(z)=z+tz$.
\end{enumerate}
Explain briefly.
\end{ex}

The geodesic flow is a dynamical system on the sphere bundle.
It is simply given by
\begin{equation}
\phi_t(x,v)
=
(x+tv,v).
\end{equation}

The geodesic flow could be equally well defined on any bundle $\R^n\times A$ for $A\subset\R^n$ with the same formula.
The most natural choices are $A=\R^n$ (all velocities possible) and $A=\Sphere^{n-1}$ (unit speed geodesics).
This is what we meant above by saying that unit speed geodesics make fibers compact.

As in the case of geodesics, dynamical systems are often studied on manifolds.
Then there is a vector field~$W$ on the manifold~$Z$ so that for any initial point $z\in Z$ the function $f(t)=\phi_t(z)$ solves the differential equation $f'(t)=W(f(t))$.
Such a vector field is called the generator of the flow, and an illustration is attempted in figure~\ref{fig:flow}.
Whenever~$\phi$ is smooth enough, the generator exists and can be computed by differentiating the flow with respect to~$t$ --- that is, $W(z)=\partial_t\phi_t(z)|_{t=0}$.
One can also impose much more structure on a flow but it will not be necessary for us here.
We only remark that the geodesic flow can be seen as a contact flow or a Hamiltonian flow.

\begin{figure}[t]
    \centering
    \includegraphics[scale=0.45,trim={3cm 0 3cm 0},clip,page=20]{xrt-figs1c.pdf}
    \caption{\viher{A vector field~$W$} defines a flow. \puna{A point~$z$} flows along the field, and \sini{its integral curve or trajectory} is always tangent to \viher{the generating vector field}. For $t>0$ the point~$\phi_t(z)$ is down the flow from~$z$.}
    \label{fig:flow}
\end{figure}

\begin{ex}
On the real line~$\R$ a vector field can be considered to be just a function $\R\to\R$.
Go back to the dynamical systems of exercise~\ref{ex:ds1}.
What are their generators?
\end{ex}

The generator of the geodesic flow is called the geodesic vector field, and it is denoted by~$X$.
(For a general flow we denoted the generator by~$W$.)
It is typical in differential geometry to identify a vector field with the associated differential operator.
For example, a vector field $w\colon\R^n\to\R^n$ is identified with the differential operator $f\mapsto w\cdot\nabla f$ which maps scalar functions to scalar functions.
For us the geodesic vector field is just a differential operator, but we still call it a vector field to follow standard terminology.

The differential operator corresponding to the generator of the flow is the derivative along the flow.
Consider a function $u\colon S\R^n\to\R$.
The geodesic vector field is defined to be
\begin{equation}
Xu(x,v)
=
\partial_t u(\phi_t(x,v))|_{t=0}.
\end{equation}

\begin{ex}
Using the definition of the geodesic flow, find a formula for the geodesic vector field.
For a function $u\colon\R^n\times \Sphere^{n-1}\to\R$, let us denote the gradient with respect to the first component by~$\nabla_x u$.
\end{ex}

If we were to write~$X$ as a vector field instead of a differential operator, it would be $X(x,v)=(v,0)$.
The second component is the zero vector field on~$\Sphere^{n-1}$.
The fact that the second component vanishes means that~$X$ is horizontal.

\begin{ex}
Suppose $u(x,v)=x\cdot v$.
What is $Xu(x,v)$?
\end{ex}

A geodesic on~$\R^n$ is the projection of a trajectory of the geodesic flow.
Trajectories are of the form $t\mapsto\phi_t(x,v)=(x+tv,v)$, and geodesics are of the form $t\mapsto\pi(\phi_t(x,v))=x+tv$.

This process can also be reversed.
If $\gamma\colon\R\to\R^n$ is a differentiable unit speed curve, we can define its lift $\tilde\gamma\colon\R\to S\R^n$ by $\tilde\gamma(t)=(\gamma(t),\dot\gamma(t))$.
The lift of a geodesic is a trajectory of the geodesic flow.
Lifting is illustrated in figure~\ref{fig:lift}.

\begin{figure}[t]
    \centering
    \includegraphics[scale=0.5,trim={3cm 0 3cm 0},clip,page=21]{xrt-figs1c.pdf}
    \caption{A geodesic~$\gamma$ is a curve on \sini{the base}, whereas its lift~$\tilde\gamma$ is a curve on the bundle. \puna{Projecting} along \viher{the fibers} brings~$\tilde\gamma$ back to~$\gamma$, undoing the \puna{lift}.}
    \label{fig:lift}
\end{figure}

\subsection{The manifold of geodesics}

Previously we discussed the set~$\Gamma$ of all straight lines in~$\R^n$.
We can describe its structure a little more now.

The geodesic flow gives rise to an equivalence relation on the sphere bundle, where~$(x,v)$ is considered equivalent to~$(x',v')$ if and only if $\phi_t(x,v)=(x',v')$ for some $t\in\R$.
We can form the quotient space of the sphere bundle with this relation, and we denote it by $S\R^n/\phi$.

This quotient space has the structure of a smooth Riemannian manifold.
In general, quotient manifolds are somewhat ill-behaved, but this particular quotient does make sense.
For geodesics on a general manifold this is no longer the case.
However, the quotient is always sensible as a topological space, but the resulting space can be wild.

This $S\R^n/\phi$ is the space of all oriented lines.
We previously denoted it by~$\Gamma$.
To get the set of all unoriented lines, one has to take another quotient to identify opposite orientations.
This second quotient is well behaved.

\begin{bex}
Consider the geodesic flow on the sphere bundle of the torus~$\T^2$.
The flow is well defined since the manifold is geodesically complete.
The sphere bundle $S\T^2=\T^2\times \Sphere^1$ is a topological space and the quotient with respect to any equivalence relation can be given the quotient topology.
Show that the topological quotient space $S\T^2/\phi$ is not Hausdorff.
Does the same apply in~$\T^n$ in any dimension~$n$?
\end{bex}

In the Euclidean space~$\R^n$ the space of lines has an explicit description.
Let us first do the oriented lines.
We first pick a direction $v\in\Sphere^{n-1}$.
All lines with this direction are described by a point on a plane orthogonal to~$v$.
As illustrated in figure~\ref{fig:line-bundle-fiber}, these lines therefore constitute a fiber~$T_v\Sphere^{n-1}$ of the tangent bundle of the unit sphere.
Therefore the whole set of oriented lines is simply the tangent bundle~$T\Sphere^{n-1}$ of the unit sphere.
If we want unoriented lines, we quotient the sphere by the antipodal map $v\mapsto-v$, leaving us with the tangent bundle of the projective space.

\begin{figure}[t]
    \centering
    \includegraphics[scale=0.35,trim={3cm 0 3cm 0},clip]{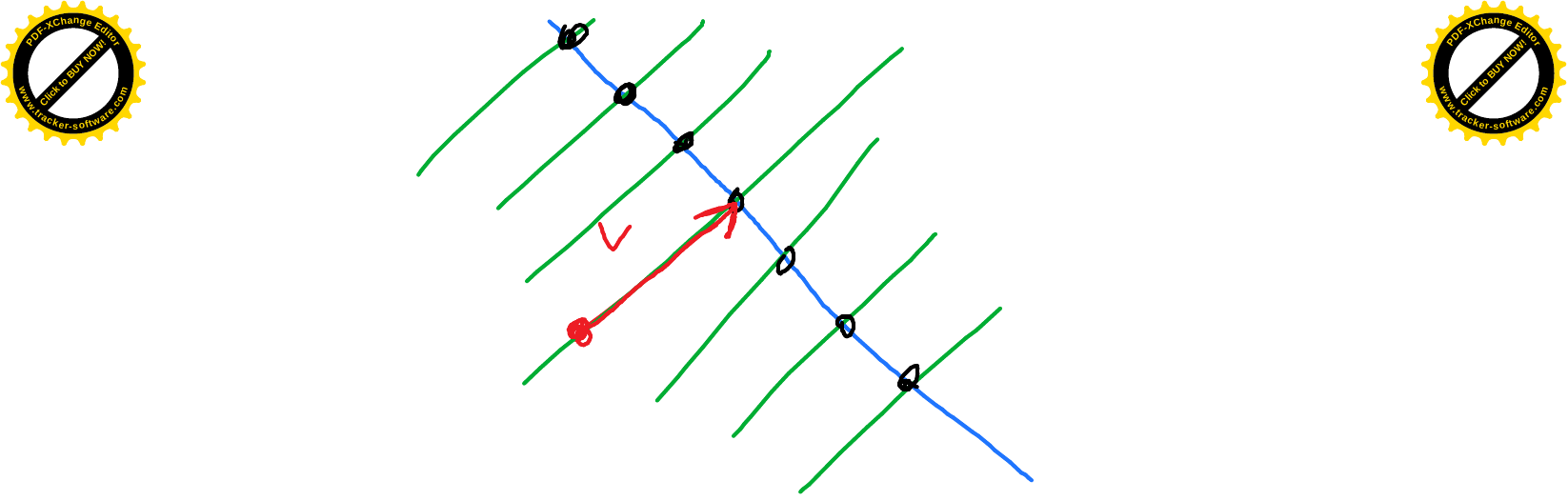}
    \caption{Given a \puna{direction $v\in\Sphere^{n-1}$}, \viher{all lines with the same direction} can be identified with points on \sini{a plane normal to $v$}. If we choose the plane to go through~$v$, this corresponds visually to the fiber~$T_v\Sphere^{n-1}$ of the tangent bundle of the unit sphere.}
    \label{fig:line-bundle-fiber}
\end{figure}

\subsection{Bounded domains and integral functions}

It will be convenient to consider geodesics on bounded sets.
For a bounded set $\Omega\subset\R^n$ we define the unit sphere bundle~$S\Omega$ as $\Omega\times \Sphere^{n-1}$.
This causes a technical inconvenience for the geodesic flow on~$S\Omega$, since it is not defined for all times.

One can define a local dynamical system on a space~$Z$ so that for each $z\in Z$ the flow~$\phi_t(z)$ is defined for times~$t$ in some interval $J_z\subset\R$.
For a simple example, consider $Z=[0,1)$ and $\phi_t(z)=z+t$ whenever it is well defined.
In this case $J_z=[-z,1-z)$.
For $z\in(0,1)$ the interval~$J_z$ contains a neighborhood of $0\in\R$ and the dynamical system behaves locally just as well as the usual kind of a dynamical system, but one just cannot go arbitrarily far in time.
Things are a little more complicated at $z=0$, where one can only follow the flow to the positive direction.
The geodesic flow on a bounded domain is essentially of this type, with only one direction available at~$\partial(S\Omega)$.
At a tangential boundary point the flow is stuck; it cannot move in either direction.

Now, let $\Omega\subset\R^n$ be a smooth and strictly convex domain and~$\bar\Omega$ its closure.

For $(x,v)\in S\Omega$, let $\tau(x,v)=\max\{t>0;\phi_t(x,v)\in S\bar\Omega\}$.
This is the time it takes for a geodesic starting at $(x,v)$ to escape~$\Omega$, as illustrated in figure~\ref{fig:tau}.

\begin{figure}[t]
    \centering
    \includegraphics[scale=0.4,trim={3cm 0 3cm 0},clip,page=23]{xrt-figs1c.pdf}
    \caption{\viher{The travel time~$\tau(x,v)$} is the length of \sini{the forward-maximal geodesic} starting at \puna{the point~$x$ in direction~$v$}. The function~$u^f$ will be defined by integrating over \sini{these half geodesics}.}
    \label{fig:tau}
\end{figure}

The boundary of the sphere bundle~$S\Omega$ is $\partial(S\Omega)=\partial\Omega\times \Sphere^{n-1}$.
Observe that $\partial(S\Omega)=S\bar\Omega\setminus S\Omega$.
For $x\in\partial\Omega$, let~$\nu(x)$ denote the outer unit normal vector to~$\partial\Omega$.
A point $(x,v)\in\partial(S\Omega)$ is called an inward boundary point if $v\cdot\nu(x)<0$.
Similarly, the outward part of the boundary consists of points in~$\partial(S\Omega)$ where the inner product is positive and the tangential part of the points where it is zero.
Let us denote the inward boundary by $\inwb\subset\partial(S\Omega)$.

\begin{ex}
The definition of~$\tau(x,v)$ can be naturally extended to all $(x,v)\in S\bar\Omega$.
What is the definition at an inward boundary point?
What should~$\tau$ be defined to be at other boundary points?
\end{ex}

\begin{ex}
\label{ex:tau-c1}
The domain $\Omega\subset\R^n$ is called smooth if there is a smooth boundary defining function $\rho\colon\R^n\to\R$ so that $\Omega=\{x\in\R^n;\rho(x)>0\}$, $\partial\Omega=\{x\in\R^n;\rho(x)=0\}$ and $\nabla\rho\neq0$ at~$\partial\Omega$.
By smoothness of~$\partial\Omega$ we refer to the smoothness of the boundary defining function~$\rho$.
(This is equivalent with~$\partial\Omega$ being locally a graph of the required smoothness.)
In addition, we may assume that $\abs{\nabla\rho(x)}=1$ for all $x\in\partial\Omega$.
The the outer unit normal is given by $\nu(x)=-\nabla\rho(x)$.

Consider a point $(x,v)\in S\Omega$ for which $\phi_{\tau(x,v)}(x,v)$ points outward (is not tangential).
Use the implicit function theorem to show that the function~$\tau$ is~$C^1$ in a neighborhood of~$(x,v)$.
\end{ex}

We define the integral function $u^f\colon S\bar\Omega\to\R$ of a function $f\colon S\bar\Omega\to\R$ as
\begin{equation}
\label{eq:uf-def}
u^f(x,v)
=
\int_0^{\tau(x,v)}f(\phi_t(x,v))\dd t
\end{equation}
whenever this integral makes sense.
In words, $u^f(x,v)$ is the integral of~$f$ over the lift of the geodesic starting at the point~$x$ in the direction~$v$.
This kind of integral function will play a big role in our next proof of injectivity of the X-ray transform.

\begin{ex}
\label{ex:u-characteristic}
Let~$\Omega$ be the unit ball and $f\equiv1$ the constant function on~$S\bar\Omega$.
Find a formula for $u^f\colon S\bar\Omega\to\R$.
As you will notice, the resulting function has differentiability issues at the tangential part of the boundary.
\end{ex}

This integral function satisfies a fundamental theorem of calculus:

\begin{ex}
\label{ex:ftc-SM}
Prove that $Xu^f=-f$ in~$S\Omega$ for $f\in C(S\bar\Omega)$.
\end{ex}

\qa

\section{The sphere bundle in two dimensions}
\label{sec:2D-SM}

\subsection{Horizontal and vertical vector fields}

To simplify matters, we choose $n=2$ for this and the next section.
Now~$\Omega$ is a smooth and convex planar domain.
The unit sphere bundle $S\Omega=\Omega\times \Sphere^1$ could also be called the circle bundle.

When convenient, we may consider a function~$f$ on~$\Omega$ to be a function on~$S\Omega$ which just happens to be independent of~$v$.
Formally, this amounts to replacing~$f$ with~$\pi^*f$.
The pullback is defined as $\pi^*f=f\circ\pi$.

The sphere bundle is three-dimensional.
One special direction is given by the geodesic vector field.
There is a second horizontal direction and one vertical direction, too, and we will study derivatives in these directions next.
We will define the horizontal vector field and the vertical vector field.
As in the case of the geodesic vector field, these will be differential operators.

Points $(x,v)\in S\bar\Omega$ can be written as $(x,v_\theta)$ for $x\in\bar\Omega$ and $\theta\in\R/2\pi\Z$, where $v_\theta=(\cos(\theta),\sin(\theta))$.
The vertical vector field is simply differentiation with respect to~$\theta$.
That is, for $u\colon S\bar\Omega\to\R$ we define $Vu\colon S\bar\Omega\to\R$ by
\begin{equation}
Vu(x,v_\theta)
=
\partial_\theta u(x,v_\theta).
\end{equation}

The geodesic vector field may be written as $Xu=v_\theta\cdot\nabla_xu=\cos(\theta)\partial_{x_1}u+\sin(\theta)\partial_{x_2}u$.
Let~$v^\perp$ denote the rotation of~$v$ by~$\frac\pi2$ clockwise.
In terms of angles, $v_\theta^\perp=v_{\theta-\pi/2}$.

We define the horizontal vector field~$X_\perp$ so that
\begin{equation}
\begin{split}
X_\perp u(x,v_\theta)
&=
v^\perp_\theta\cdot\nabla_xu(x,v_\theta)
\\&=
\sin(\theta)\partial_{x_1}u(x,v_\theta)
-
\cos(\theta)\partial_{x_2}u(x,v_\theta).
\end{split}
\end{equation}
The concepts of horizontal and vertical align with figure~\ref{fig:bundle}.

\begin{ex}
Let~$\Omega$ be the unit disc and write~$x$ in polar coordinates.
Write the function~$u^f$ of exercise~\ref{ex:u-characteristic} explicitly in these coordinates (one radius and two angles).
Calculate~$Vu^f$ and~$X_\perp u^f$.
(We know that $Xu^f=-f=-1$.)
Do these functions blow up at at the tangential part of the boundary of the sphere bundle?
(This is the typical place for regularity issues.)
\end{ex}

\begin{bex}
For $f\in C^2(\Omega)$, let~$\pi^*f$ be the pullback over the projection $\pi\colon S\Omega\to\Omega$.
Show that~$f$ is harmonic if and only if $(X^2+X_\perp^2)\pi^*f=0$ in~$S\Omega$.
\end{bex}

The geodesic vector field~$X$ at~$(x,v)$ is the derivative with respect to~$x$ in the direction of~$v$.
The horizontal vector field~$X_\perp$ is the derivative with respect to~$x$ in the direction orthogonal to~$v$.
The vertical vector field is the derivative with respect to the direction~$v$.

As vector fields (as opposed to differential operators), these three vector fields are orthogonal and have unit length.
They are an orthonormal basis to the tangent spaces of the sphere bundle.
This happens on any two-dimensional Riemannian manifold when the sphere bundle is equipped with the so-called Sasaki metric.
In higher dimensions the Sasaki metric is trickier, and its definition needs to be taken to a separate course.

In higher dimensions there are still natural horizontal and vertical derivatives, but they are no longer vector fields.
To avoid technicalities, we stick to dimension two.

\begin{ex}
\label{ex:VXu=0}
Show that if $f\in C(\bar\Omega)$, then $VXu^f=0$ in~$S\Omega$.
Here $f\in C(\bar\Omega)$ is identified with $\pi^*f\in C(S\bar\Omega)$.
\end{ex}

\subsection{Commutators}
\label{sec:commutators}

To calculate with differential operators, we need a couple of basic tools.
We need to be able to integrate by parts and change the order of differentiation.
Integration by parts comes in the next section, and now we will study what happens when the order of differentiation changes.
In our situation the order of differentiation does matter, but it only matters to a lower order, so to say.
The effect of changing the order is captured by commutators.

The commutator of two linear operators~$A$ and~$B$ is $[A,B]=AB-BA$.
We clearly have $[A,A]=0$ for any $A$.
For example, consider the following two operators on functions on the real line:
\begin{equation}
\begin{split}
(Af)(x)&=f'(x),\\
(Bf)(x)&=h(x)f(x),
\end{split}
\end{equation}
where~$h$ is a sufficiently smooth function.
Then
\begin{equation}
\begin{split}
[A,B]f(x)
&=
(ABf)(x)
-
(BAf)(x)
\\&=
\frac{\der}{\der x}(h(x)f(x))
-
h(x)f'(x)
\\&=
h'(x)f(x).
\end{split}
\end{equation}

\begin{ex}
\label{ex:[vf,vf]}
Let us call~$A$ a first order differential operator on the real line if it is of the form
\begin{equation}
Af(x)
=
h(x)f'(x)+g(x)f(x)
\end{equation}
for some smooth functions~$h$ and~$g$.
Show that the commutator of two first order differential operators is a first order differential operator.
\end{ex}

In general, the product of differential operators of orders~$k$ and~$m$ is a differential operator of order $k+m$, and the commutator has order $k+m-1$.
The leading order derivative may vanish, in which case the order is actually lower.
After exercise~\ref{ex:[vf,vf]} we expect the commutator of two vector fields to be a vector field (which is indeed true), and we can verify all our commutators by calculation:

\begin{ex}
\label{ex:comm-XV}
Show that $[X,V]=X_\perp$.
\end{ex}

\begin{ex}
\label{ex:comm-VXp}
Show that $[V,X_\perp]=X$.
\end{ex}

\begin{ex}
\label{ex:comm-XXp}
Show that $[X,X_\perp]=0$.
\end{ex}

The case of a two-dimensional Riemannian manifold is surprisingly similar.
The first two commutators above stay intact, and $[X,X_\perp]$ contains~$V$ and the curvature.
In higher dimensions the formulas are a little trickier, but still the same in spirit.
We will discuss this a little more in section~\ref{sec:pestov-mfld}.

In fact, the only thing we need to know about commutators in the proof is the following lemma.
Observe that~$X_\perp$ does not appear in the claim, but it is useful for the proof.

\begin{lemma}
\label{lma:XV,VX}
Our vector fields satisfy
\begin{equation}
[XV,VX]
=
-X^2.
\end{equation}
\end{lemma}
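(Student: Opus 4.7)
The plan is to reduce this second-order commutator to the three first-order identities $[X,V]=X_\perp$, $[V,X_\perp]=X$, and $[X,X_\perp]=0$ established in the preceding exercises. The only extra tool needed is the standard derivation property of the commutator in any associative algebra,
\begin{equation}
[AB,C]=A[B,C]+[A,C]B,
\end{equation}
which I would apply with $A=X$, $B=V$, and $C=VX$ so that the outer bracket breaks into pieces involving only one commutation of $V$ past $X$ at a time.

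Concretely, this gives $[XV,VX]=X[V,VX]+[X,VX]V$, and each inner bracket collapses immediately. For the first, $[V,VX]=V^2X-VXV=V(VX-XV)=V[V,X]=-VX_\perp$, and for the second, $[X,VX]=XVX-VX^2=(XV-VX)X=[X,V]X=X_\perp X$. Substituting back yields
\begin{equation}
[XV,VX]=-XVX_\perp+X_\perp X V.
\end{equation}
Using $[X,X_\perp]=0$ to replace $X_\perp X$ by $XX_\perp$ in the second term and then factoring $X$ on the left gives $X(X_\perp V-VX_\perp)=-X[V,X_\perp]=-X\cdot X=-X^2$, which is the claim.

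The whole computation is bookkeeping inside the algebra of differential operators on $S\Omega$; no analysis or geometric content enters beyond the three known commutators. The only mild obstacle is the choice at the very first step: expanding $[XV,VX]$ by the Leibniz rule on the left factor $XV$ produces inner commutators that match the known list cleanly, whereas expanding on the right factor $VX$ would generate commutators with $XV$ that one would have to unwind again. Once one picks the right expansion, the rest is automatic.
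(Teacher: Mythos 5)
Your computation is correct and is essentially the same argument as the paper's: both reduce $[XV,VX]$ to the three first-order commutators $[X,V]=X_\perp$, $[V,X_\perp]=X$, $[X,X_\perp]=0$ via the Leibniz rule for commutators, differing only in which factor is expanded first (the paper substitutes $XV=VX+X_\perp$ to reach $[X_\perp,VX]$, while you apply $[AB,C]=A[B,C]+[A,C]B$ directly). The paper itself remarks that the intermediate steps admit several equivalent orderings, and yours is one of them.
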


\begin{proof}
%
There are many ways to prove this, the simplest one being to simply expand the commutators in full and use the commutator formulas of exercises~\ref{ex:comm-XV}, \ref{ex:comm-VXp}, and~\ref{ex:comm-XXp} to swap the orders so that highest order operators cancel out.
One option is to use the commutator property $[AB,C]=A[B,C]+[A,C]B$, which is to be verified by hand in exercise~\ref{ex:commutator-product}:
\begin{equation}
\begin{split}
[XV,VX]
&=
[XV,XV-X_\perp]
\\&=
[XV,XV]
-
[XV,X_\perp]
\\&=
0
-
X[V,X_\perp]
-
[X,X_\perp]V
\\&=
-X^2
.
\end{split}
\end{equation}
All methods lead to the same conclusion, of course.
The intermediate steps here are but an example.
\end{proof}

The commutator of two second order operators is typically of third order.
In this particular case it happens to be second order because~$XV$ and~$VX$ only differ by the first order operator~$X_\perp$.

\begin{ex}
\label{ex:commutator-product}
Consider linear operators ($n\times n$ matrices, for example) $A$, $B$, and~$C$.
Show that $[A,BC]=[A,B]C+B[A,C]$ and $[AB,C]=A[B,C]+[A,C]B$.
\end{ex}

\begin{ex}
Compute $[XVV,VX_\perp]$ and $[V^2,X_\perp^2]$.
There might not be a clear simplest form as in lemma~\ref{lma:XV,VX}, but simplify as far as you can.
\end{ex}

\subsection{Integration on the sphere bundle}

The sphere bundle is a product space, and we can naturally use the product measure~$\Sigma$.
Therefore the integral of $g\in C(S\bar\Omega)$ is
\begin{equation}
\int_{S\Omega}g(x,v)\dd\Sigma(x,v)
=
\int_\Omega\int_{\Sphere^1}g(x,v)\dd S(v)\dd x.
\end{equation}
Alternatively, the~$\Sphere^1$ integral can be written as $\int_0^{2\pi}g(x,v_\theta)\dd\theta$.

The boundary~$\partial\Omega$ is a closed smooth curve, and we have a natural measure on it.
One way to describe it is to write the curve as $\alpha\colon[0,L]\to\R^2$ with arc length parametrization and then integrate on the interval $[0,L]$.

This gives rise to a measure~$\tilde\sigma$ on~$\partial(S\Omega)$, given by
\begin{equation}
\int_{\partial(S\Omega)}g\dd\tilde\sigma
=
\int_{\Sphere^1}
\int_0^L
g(\alpha(t),v)
\dd t
\dd S(v)
\end{equation}
for any $g\in C(S\bar\Omega)$.

It turns out that the measure $\sigma=\abs{v\cdot\nu(x)}\tilde\sigma$ is more natural.
It will appear in a change of variables formula for integration over the sphere bundle.

\begin{proposition}[Santal\'o's formula]
\label{prop:santalo}
Let $\Omega\subset\R^2$ be a convex, bounded, and smooth domain.
For $g\in C(S\bar\Omega)$ we have
\begin{equation}
\int_{S\Omega}g\dd\Sigma
=
\int_\inwb
\int_0^{\tau(x,v)}
g(\phi_t(x,v))
\dd t
\dd\sigma(x,v).
\end{equation}
Alternatively, the integral can be taken over the entire~$\partial(S\Omega)$ since~$\tau$ vanishes outside~$\inwb$.
\end{proposition}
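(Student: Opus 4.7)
The plan is to realize Santaló's formula as a change-of-variables identity. I will introduce the map
\begin{equation*}
\Psi\colon\{(x,v,t):(x,v)\in\inwb,\ 0\le t\le\tau(x,v)\}\to S\bar\Omega,
\qquad
\Psi(x,v,t)=\phi_t(x,v),
\end{equation*}
argue that it is a bijection onto $S\Omega$ modulo a set of measure zero, and compute its Jacobian determinant. Once the determinant comes out to $\abs{v\cdot\nu(x)}$, the classical change-of-variables formula in $\R^3$ converts the integral against $\dd\Sigma$ on the range into the claimed integral against $\dd t\,\dd\sigma$ on the domain, since that weight is precisely what turns $\tilde\sigma$ into $\sigma$.

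To carry out the computation concretely, I would parametrize $\partial\Omega$ by arc length $\alpha\colon[0,L]\to\partial\Omega$ oriented so that the outward unit normal is $\nu(\alpha(s))=(\alpha_2'(s),-\alpha_1'(s))$, and parametrize $S^1$ by $\theta$ with $v_\theta=(\cos\theta,\sin\theta)$. In these coordinates $\dd\tilde\sigma=\der s\,\der\theta$ and $\Psi$ becomes $\Psi(s,\theta,t)=(\alpha(s)+tv_\theta,v_\theta)$. Its differential, with respect to target coordinates $(x_1,x_2,\theta')$, has columns
\begin{equation*}
\partial_t\Psi=(v_\theta,0),\qquad
\partial_s\Psi=(\alpha'(s),0),\qquad
\partial_\theta\Psi=(t\partial_\theta v_\theta,1).
\end{equation*}
Expanding the $3\times3$ determinant along the bottom row yields $v_\theta^{(1)}\alpha_2'(s)-v_\theta^{(2)}\alpha_1'(s)=v_\theta\cdot\nu(\alpha(s))$, so $\abs{\det D\Psi}=\abs{v\cdot\nu(x)}$, which is exactly the weight packaged into $\sigma$.

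The real work lies in justifying that $\Psi$ is a bijection onto a full-$\Sigma$-measure subset of $S\Omega$. By convexity of $\Omega$ and smoothness of $\partial\Omega$, every $(x,v)\in S\Omega$ admits a unique backward exit time $t^\ast(x,v)>0$ with $x-t^\ast v\in\partial\Omega$, and the forward velocity $v$ at that exit point is inward outside the tangential set $\{v\cdot\nu=0\}$. This tangential set carries zero $\sigma$-mass thanks to the weight $\abs{v\cdot\nu}$, and its forward flow inside $S\Omega$ is a $\Sigma$-null set as well. Combined with the $C^1$-regularity of $\tau$ on the non-tangential part established in exercise~\ref{ex:tau-c1}, this lets me apply the classical change-of-variables formula on each region $\{v\cdot\nu\le-\eps\}$ and then pass to the limit $\eps\to 0$ by dominated convergence. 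The main obstacle is thus not the algebra of the Jacobian but the geometric bookkeeping near the tangential boundary, where $\Psi$ degenerates and the flow is not naturally defined.
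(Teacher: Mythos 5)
Your proof is correct, but it takes a genuinely different route from the one in the notes. The notes first apply Fubini in the fiber variable, fix a direction $v\in S^1$, write $\int_\Omega g(x,v)\dd x$ as an iterated integral over the decomposition $\R^2=v\R\oplus v^\perp\R$, and then perform a \emph{one-dimensional} substitution $s\mapsto w(s)$ taking the perpendicular offset of a line to the arc-length parameter of its entry point on $\partial\Omega$; the factor $\abs{v\cdot\nu}$ appears there as the derivative $w'(s)=-1/v\cdot\nu(\beta(w(s)))$. You instead set up a single three-dimensional change of variables via the flow-out map $\Psi(x,v,t)=\phi_t(x,v)$ from $\inwb\times[0,\tau]$, and your Jacobian computation is right: with the outward normal $\nu(\alpha(s))=(\alpha_2'(s),-\alpha_1'(s))$ the determinant is exactly $v_\theta\cdot\nu(\alpha(s))$, which is precisely the density relating $\sigma$ to $\tilde\sigma$. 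The trade-off: the notes' argument needs only a scalar substitution justified by the implicit function theorem, whereas yours requires the $3\times3$ Jacobian and the bijectivity of $\Psi$; in exchange, your version is the one that generalizes verbatim to Riemannian manifolds (it is the standard ``flow out from the influx boundary'' proof, where $\abs{v\cdot\nu}\dd\tilde\sigma\dd t$ is the pullback of the Liouville measure) and it makes the geometric meaning of $\sigma$ transparent. One small simplification you are entitled to: for a convex domain a tangent line to $\partial\Omega$ is a supporting line, hence disjoint from the open set $\Omega$, so no point of $S\Omega$ flows backward to a tangential exit and $\Psi$ is an honest bijection from $\{(x,v,t):(x,v)\in\inwb,\ v\cdot\nu(x)<0,\ 0<t<\tau(x,v)\}$ onto $S\Omega$; the only genuine issue near the tangential set is the degeneration of the Jacobian and of $\tau$, which your $\eps$-truncation and monotone/dominated convergence step handles.
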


\begin{proof}
First, we change the order of integration in the integral over~$S\Omega$:
\begin{equation}
\label{eq:vv8}
\begin{split}
\int_{S\Omega}g\dd\Sigma
&=
\int_\Omega\int_{\Sphere^1}g(x,v)\dd S(v)\dd x
\\&=
\int_{\Sphere^1}\int_\Omega g(x,v)\dd x\dd S(v).
\end{split}
\end{equation}
Now fix any $v\in \Sphere^1$ and consider the inner integral
\begin{equation}
\label{eq:vv9}
I(v)
=
\int_\Omega g(x,v)\dd x.
\end{equation}
We extend~$g$ to~$S\R^2$ by zero for convenience.
We write the plane as an orthogonal direct sum $\R^2=v\R\oplus v^\perp\R$.
With this decomposition, we have
\begin{equation}
I(v)
=
\int_\R
\int_\R
g(sv^\perp+tv,v)
\dd t
\dd s
.
\end{equation}
The inner integral is an integral along the geodesic flow as desired.
We will turn the outer integral into an integral over the boundary.

Let us denote
\begin{equation}
\partial_v\Omega
=
\{x\in\partial\Omega;v\cdot\nu(x)<0\}.
\end{equation}
We parametrize this part of the boundary by a (counterclockwise) unit speed curve $\beta\colon[0,L_v]\to\partial_v\Omega$.
Observe that $\{(x,v);v\in \Sphere^1,x\in\partial_v\Omega\}=\inwb$.

Let us denote
\begin{equation}
a(v)
=
\min\{s\in\R;(sv^\perp+v\R)\cap\bar\Omega\neq\emptyset\}
\end{equation}
and
\begin{equation}
b(v)
=
\max\{s\in\R;(sv^\perp+v\R)\cap\bar\Omega\neq\emptyset\}.
\end{equation}
Now, there is a function $w\colon(a(v),b(v))\to(0,L_v)$ so that $\beta(w(s))-sv^\perp\in v\R$.
In fact,~$w$ is a~$C^1$ diffeomorphism with $w'(s)=-1/v\cdot\nu(\beta(w(s)))>0$.
The details are left as exercise~\ref{ex:explain-w-2}.
Some of the features are captured in figure~\ref{fig:santalo}.

\begin{figure}[t]
    \centering
    \includegraphics[scale=0.5,trim={3cm 0 3cm 0},clip,page=24]{xrt-figs1c.pdf}
    \caption{\puna{The vector~$v$} defines a coordinate system whose \viher{axes are along and orthogonal to~$v$}. The coordinates are called~$t$ (along~$v$) and~$s$ (orthogonal to~$v$). The projection of the domain $\Omega$ to \viher{the line orthogonal to~$v$} occupies an interval \puna{from~$a(v)$ to~$b(v)$}. The \sini{unit speed curve~$\beta$} parametrizes the side of $\partial\Omega$ visible from below, defined in terms of \viher{the outer unit normal vector field~$\nu$ of~$\partial\Omega$}. \puna{the map~$w$} maps the interval on \viher{on the $s$-axis} to \sini{our preferred half of the boundary}.}
    \label{fig:santalo}
\end{figure}

We change the variable of integration from~$s$ to $z=w(s)$ and obtain
\begin{equation}
\begin{split}
I(v)
&=
\int_\R
\int_\R
g(sv^\perp+tv,v)
\dd t
\dd s
\\&=
\int_{a(v)}^{b(v)}
\int_\R
g(sv^\perp+tv,v)
\dd t
\dd s
\\&=
\int_{0}^{L_v}
\int_\R
g(w^{-1}(z)v^\perp+tv,v)
\dd t
(-v\cdot\nu(\beta(z)))
\dd z
\\&=
\int_0^{L_v}
\int_0^{\tau(\beta(z),v)}
g(\beta(z)+tv,v)
\dd t
\abs{v\cdot\nu(\beta(z))}
\dd z.
\end{split}
\end{equation}
Since~$\beta$ is a subcurve of~$\alpha$ (restriction to a subinterval, possibly after rechoosing the initial and final point on~$\alpha$) and $\tau=0$ on the part $\alpha\setminus\beta$, we get
\begin{equation}
\label{eq:vv10}
I(v)
=
\int_0^L
\int_0^{\tau(\alpha(z),v)}
g(\alpha(z)+tv,v)
\dd t
\abs{v\cdot\nu(\alpha(z))}
\dd z.
\end{equation}
Combining~\eqref{eq:vv8}, \eqref{eq:vv9}, and~\eqref{eq:vv10}, we find
\begin{equation}
\begin{split}
\int_{S\Omega}g\dd\Sigma
&=
\int_{\Sphere^1}
\int_0^L
\int_0^{\tau(\alpha(z),v)}
g(\alpha(z)+tv,v)
\dd t
\abs{v\cdot\nu(\alpha(z))}
\dd z
\dd S(v)
\\&=
\int_{\partial(S\Omega)}
\left(
\int_0^{\tau(x,v)}
g(x+tv,v)
\dd t
\right)
\abs{v\cdot\nu(x)}
\dd\tilde\sigma(x,v)
\\&=
\int_{\partial(S\Omega)}
\left(
\int_0^{\tau(x,v)}
g(\phi_t(x,v))
\dd t
\right)
\dd\sigma(x,v)
\end{split}
\end{equation}
as claimed.
\end{proof}


\begin{ex}
\label{ex:explain-w-2}
Explain why $w\in C^1$ and $w'(s)=-1/v\cdot\nu(\beta(w(s)))>0$.
\end{ex}

\begin{ex}
Show that the measure of~$\Omega$ is $\frac1{2\pi}\int_\inwb\tau(x,v)\dd\sigma(x,v)$.
\end{ex}

Santal\'o's formula states that the integral over the sphere bundle can be calculated by calculating it one geodesic at a time, first integrating over the (lifted) geodesic and the integrating over the initial points and directions of these geodesics at~$\inwb$.
The space of all geodesics through~$\Omega$ can be identified with~$\inwb$.
The measure spaces $(\Gamma,\mu)$ and $(\inwb,\sigma)$ (with Borel $\sigma$-algebras) are two descriptions of the same thing.

The formula is a change of variables.
It will help us find Green-type formulas for our three vector fields (see exercises~\ref{ex:green-V}, \ref{ex:green-X} and~\ref{ex:green-Xperp}), and those will lead to integration by parts on the sphere bundle.

The formula also gives rise to some integral properties which will be convenient.

\begin{ex}
\label{ex:green-V}
Show that if $g\in C^1(S\bar\Omega)$, then
\begin{equation}
\int_{S\Omega}Vg\dd\Sigma
=
0.
\end{equation}
Santal\'o is not needed.
\end{ex}

\begin{ex}
\label{ex:green-X}
Show that if $g\in C^1(S\bar\Omega)$, then
\begin{equation}
\int_{S\Omega}Xg\dd\Sigma
=
\int_\inwb
\left(
g(\phi_{\tau(x,v)}(x,v))
-
g(x,v)
\right)
\dd\sigma(x,v).
\end{equation}
Santal\'o is useful.
\end{ex}

\begin{ex}
\label{ex:green-Xperp}
Show that if $g\in C^2(S\bar\Omega)$ and $g|_{\partial(S\Omega)}=0$, then
\begin{equation}
\int_{S\Omega}X_\perp g\dd\Sigma
=
0.
\end{equation}
Use exercises~\ref{ex:green-V} and~\ref{ex:green-X} and the commutator formulas.
\end{ex}

\qa

\section{X-ray tomography and the transport equation}
\label{sec:pestov}

\subsection{Integration revisited}

For $g,h\in L^2(S\Omega,\Sigma)$ we write
\begin{equation}
\ip{g}{h}
=
\int_{S\Omega}gh\dd\Sigma
\end{equation}
and $\aabs{g}=\sqrt{\ip{g}{g}}$.
Our functions in this section are real-valued so no conjugation is needed.
The complex case is not harder, but the real approach is technically convenient.

Santal\'o's formula (proposition~\ref{prop:santalo}) makes it easy to find integration by parts formulas for our three vector fields.

\begin{lemma}
\label{lma:sm-ibp}
For any $g,h\in C^\infty_c(S\Omega)$ we have
\begin{equation}
\begin{split}
\ip{g}{Xh}
&=
-\ip{Xg}{h},
\\
\ip{g}{Vh}
&=
-\ip{Vg}{h},\quad\text{and}
\\
\ip{g}{X_\perp h}
&=
-\ip{X_\perp g}{h}.
\end{split}
\end{equation}
\end{lemma}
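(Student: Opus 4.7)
The plan is to establish all three formulas by combining the Leibniz rule for first-order differential operators with the integral identities already developed in the previous exercises (the vanishing of $\int_{S\Omega} Yg\dd\Sigma$ for $Y\in\{X,V,X_\perp\}$ when the relevant boundary terms disappear). Since $g,h\in C^\infty_c(S\Omega)$, the product $gh$ is again in $C^\infty_c(S\Omega)$, in particular it vanishes in a neighborhood of $\partial(S\Omega)$, which will kill every boundary contribution.

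First I would record the Leibniz rules $Y(gh)=(Yg)h+g(Yh)$ for $Y\in\{X,V,X_\perp\}$; these hold because each of the three operators is a first-order real directional derivative on the smooth manifold $S\Omega$ (one checks this in coordinates $(x,\theta)$ using the explicit formulas from the previous section). Integrating and rearranging gives
\begin{equation}
\ip{g}{Yh}+\ip{Yg}{h}=\int_{S\Omega}Y(gh)\dd\Sigma,
\end{equation}
so the task reduces to showing that the right-hand side vanishes in each of the three cases when the integrand $gh$ lies in $C^\infty_c(S\Omega)$.

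For $Y=V$ this is the exercise that $\int_{S\Omega}Vg\dd\Sigma=0$ for any $C^1$ function: integrating $\partial_\theta(gh)$ over $\theta\in[0,2\pi]$ at fixed $x$ gives zero by periodicity, and one then integrates over $\Omega$. For $Y=X$ I would invoke the exercise following Santal\'o's formula, which says
\begin{equation}
\int_{S\Omega}X(gh)\dd\Sigma=\int_\inwb\bigl[(gh)(\phi_{\tau(x,v)}(x,v))-(gh)(x,v)\bigr]\dd\sigma(x,v);
\end{equation}
because $gh$ is supported in a compact subset of the open set $S\Omega$, both the endpoint value and the starting value of $gh$ along every boundary geodesic vanish, so the right-hand side is zero. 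For $Y=X_\perp$ I would use the third exercise from section~\ref{sec:commutators}, which states that $\int_{S\Omega}X_\perp g\dd\Sigma=0$ whenever $g\in C^2(S\bar\Omega)$ vanishes on $\partial(S\Omega)$; since $gh\in C^\infty_c(S\Omega)$ certainly satisfies this, the claim follows.

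The only slightly delicate step is the one for $X_\perp$, since its Green-type identity is not as immediate as for $X$ and $V$; but that work has already been done in the exercises via commutators, so no further argument is needed here. Everything else is routine once the Leibniz rule is in place, and the compact support assumption does all the heavy lifting of eliminating boundary terms.
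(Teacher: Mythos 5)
Your proposal is correct and follows exactly the route the paper intends: the lemma is left as an exercise with the hint to use the results of the previous section together with the derivation (Leibniz) property of $X$, $V$, and $X_\perp$, which is precisely the combination you use. The compact support of $gh$ killing all boundary contributions in the three integral identities is the right observation, so nothing is missing.
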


This lemma allows us to integrate by parts with our derivatives.
As always, integration by parts follows from two elements: a product rule for the derivative and the fundamental theorem of calculus linking derivatives and integrals.
The product rule is studied in exercise~\ref{ex:derivation} and role of the fundamental theorem of calculus is played by exercises~\ref{ex:green-V}, \ref{ex:green-X}, and~\ref{ex:green-Xperp}, now with zero boundary values.

\begin{ex}
\label{ex:derivation}
An operator $A\colon C^\infty(S\Omega)\to C^\infty(S\Omega)$ is called a derivation if it is linear and satisfies $A(gh)=gAh+hAg$ and $A1=0$, where~$1$ stands for the constant function.

Show that the commutator of two derivations is a derivation.
Explain why~$X$, $V$, and~$X_\perp$ are derivations.
\end{ex}

\begin{ex}
Prove lemma~\ref{lma:sm-ibp} using results from the previous section.
Exercise~\ref{ex:derivation} is also useful.
\end{ex}

\subsection{A second order PDE}

Now we finally begin our analysis of the X-ray transform.
We want to show that if $f\in C_c^\infty(\Omega)$ integrates to zero over all lines, then $f=0$.
The starting point is the so-called transport equation $Xu^f=-f$ from exercise~\ref{ex:ftc-SM}.
However, it will be more convenient to take a second derivative and pass to a second order homogeneous equation.

First, let us define the integral function~$u^f$ as above.
Recall from exercise~\ref{ex:VXu=0} that $VXu^f=0$.
In addition,~$u^f$ satisfies the boundary condition $u^f|_{\partial(S\Omega)}=0$.
For outward pointing and tangential directions, this is because $\tau=0$.
For inward pointing directions, this is because $\xrt f=0$.
See figure~\ref{fig:uf-0-on-boundary}.

\begin{figure}[t]
    \centering
    \includegraphics[scale=0.35,trim={3cm 0 3cm 0},clip,page=25]{xrt-figs1c.pdf}
    \caption{If \puna{a point $(x,v)\in\partial(S\Omega)$ points inwards}, then $u^f(x,v)=0$ because $\xrt f=0$. If \viher{a point $(y,w)\in\partial(S\Omega)$ points outwards}, then $u^f(x,v)=0$ because the geodesic to be integrated over has zero length. }
    \label{fig:uf-0-on-boundary}
\end{figure}

\begin{ex}
\label{ex:inwb-u-xrt}
Explain how to identify the functions~$\xrt f$ and~$u^f|_\inwb$.
\end{ex}

Previously we defined the X-ray transform of a scalar function $f\colon\R^n\to\R$.
With the help of exercise~\ref{ex:inwb-u-xrt} we can in fact define the X-ray transform of a compactly supported continuous function $f\colon S\R^n\to\R$.
This leads to so-called tensor tomography, which is outside the scope of this course.

The function $u=u^f$ solves the boundary value problem
\begin{equation}
\label{eq:bvp}
\begin{cases}
VXu=0 & \text{in }S\Omega\\
u=0 & \text{on }\partial(S\Omega).
\end{cases}
\end{equation}
Clearly $u=0$ is a solution.
If the solution to this second order PDE is unique, then it follows that $u^f=0$ and therefore $f=-Xu^f=0$.
This leads to injectivity of the X-ray transform.

The operator~$XV$ is not elliptic, hyperbolic, or parabolic --- you could call it diabolic.
Therefore we do not have access to standard uniqueness theorems, and we have to show uniqueness by hand.

\begin{ex}
Show that
\begin{equation}
XV=\frac14(X+V)^2-\frac14(X-V)^2+\frac12X_\perp
\end{equation}
and
\begin{equation}
VX=\frac14(X+V)^2-\frac14(X-V)^2-\frac12X_\perp
.
\end{equation}
Since the three vector fields~$X$, $V$, and~$X_\perp$ are orthonormal (with respect to the Sasaki metric), so are $\frac1{\sqrt{2}}(X+V)$, $\frac1{\sqrt{2}}(X-V)$, and~$X_\perp$.
Therefore our two operators look locally like the operators $\frac12(\partial_x^2-\partial_y^2\pm\partial_z)$ in~$\R^3$.
\end{ex}

One might expect that the first order term is not that relevant, but it turns out to be very important.
In other words, the order of the operators~$X$ and~$V$ is crucial.
We will show that assuming zero boundary values the PDE $VXu=0$ has unique solutions, but $XVu=0$ never does.
The uniqueness result will be proven in the next section.
The non-uniqueness result is easier and is left as exercise~\ref{ex:XV}.


\begin{ex}
\label{ex:X=0}
Suppose $g\in C^1(S\bar\Omega)$.
Show that if $g|_\inwb=0$ and $Xg=0$, then $g=0$.
\end{ex}

\begin{ex}
\label{ex:V-boundary}
Also, explain why $g|_\inwb=0$ implies $Vg|_\inwb=0$.
\end{ex}

\begin{ex}
\label{ex:XV}
Suppose $u\in C^\infty(S\bar\Omega)$ with $u|_{\partial(S\Omega)}=0$.
Show that $XVu=0$ in~$S\Omega$ if and only if there is $f\in C^\infty(\bar\Omega)$ with zero boundary values so that $u=\pi^*f$.
\end{ex}

Exercise~\ref{ex:XV} will, perhaps surprisingly, play a role in section~\ref{sec:vf}.

\subsection{Properties of the integral function}

Before studying the boundary value problem~\eqref{eq:bvp} further, it is good to verify that~$u^f$ is sufficiently regular.

\begin{lemma}
\label{lma:u-smooth}
The function~$u^f$ defined in~\eqref{eq:uf-def} is in~$C_c^\infty(S\Omega)$ when $f\in C_c^\infty(\Omega)$ and $\xrt f=0$.
\end{lemma}

\begin{proof}
Recall exercise~\ref{ex:tau-c1}.
If the boundary~$\partial\Omega$ is smooth, it follows with the same argument and the smooth version of the implicit function theorem that~$\tau$ is smooth near $(x,v)\in S\Omega$ when $\phi_{\tau(x,v)}(x,v)$ points outward.
The function~$u^f$ is defined by
\begin{equation}
u^f(x,v)
=
\int_0^{\tau(x,v)}f(x+tv)\dd t.
\end{equation}
Since~$f$ and~$\tau$ are smooth, so is~$u^f$.
(We omit some technical details here, but the statement is hopefully plausible to the reader.
)

So far we have not used the fact that~$f$ is compactly supported nor tried to prove that so is~$u^f$.
Also, smoothness at tangential exits has not been established yet.

Assume~$f$ is supported in a compact set $K\subset\Omega$.
By exercise~\ref{ex:compact-convex} we may assume that~$K$ is convex.
We will show that~$u^f$ is supported in $SK=K\times \Sphere^1$, which means that $u^f(x,v)=0$ whenever $x\notin K$.

This will also prove smoothness near points $(x,v)\in S\Omega$ where $\phi_{\tau(x,v)}(x,v)$ is tangential to~$\partial\Omega$; see exercise~\ref{ex:convex-transversal}.
The technicalities are reduced considerably by showing that the function is zero where differentiating it would be hardest.
Therefore it only remains to prove the support condition for~$u^f$.

Take any $x\in\bar\Omega\setminus K$ and $v\in \Sphere^1$.
Let
\begin{equation}
\gamma(x,v)
=
\{x+tv;t\in[0,\tau(x,v)]\}
\end{equation}
be the line from~$x$ to~$\partial\Omega$ in the direction of~$v$.
If $\gamma(x,v)\cap K=\emptyset$, then $u^f(x,v)=0$ since~$u^f(x,v)$ is the integral of~$f$ over~$\gamma(x,v)$.

Because~$K$ is convex and $x\notin K$, at most one of the line segments~$\gamma(x,v)$ and~$\gamma(x,-v)$ can meet~$K$.
See figure~\ref{fig:no-2-hits}.
Thus if $\gamma(x,v)\cap K\neq\emptyset$, then $\gamma(x,-v)\cap K=\emptyset$.
By the argument given above, $u^f(x,-v)=0$.

\begin{figure}[t]
    \centering
    \includegraphics[scale=0.4,trim={3cm 0 3cm 0},clip,page=1]{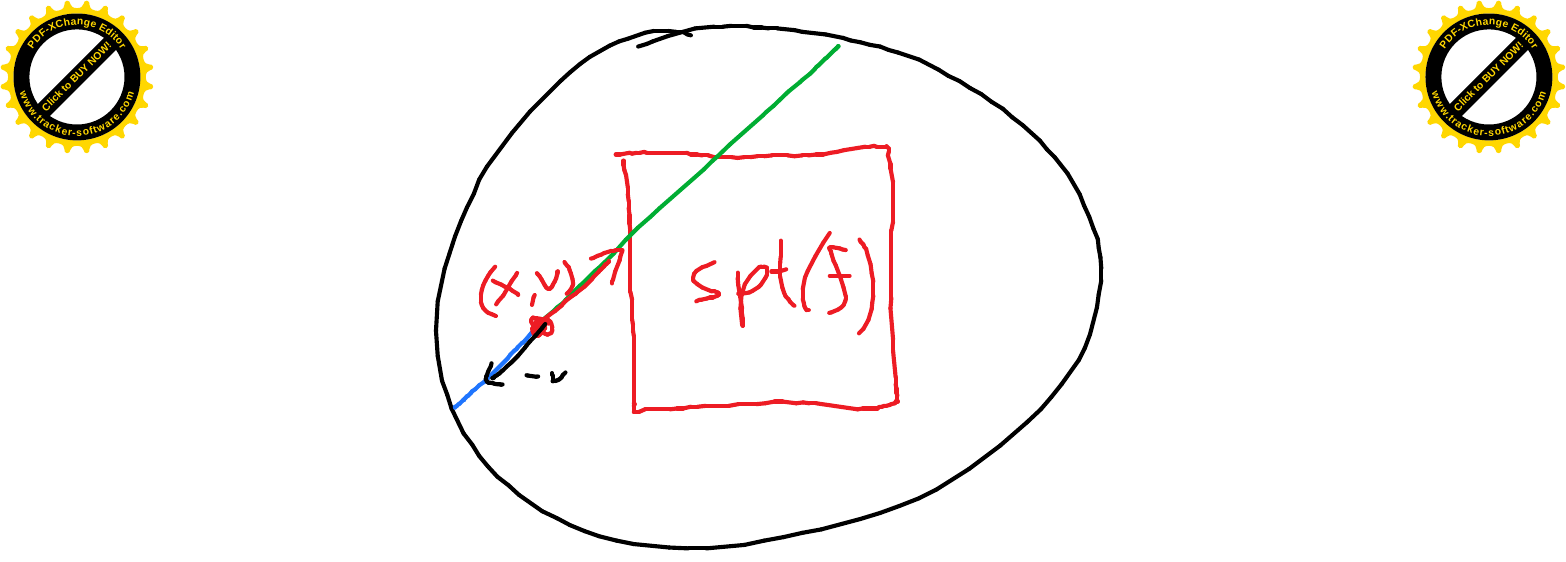}
    \caption{When the \puna{point~$x$} is close enough to the boundary, at most one of the two geodesics \viher{$\gamma(x,v)$ (in the direction~$v$} and \sini{$\gamma(x,-v)$ (in the direction $-v$} can meet \puna{the support of~$f$}. If the direction~$v$ is close enough to being tangent to the boundary, it may well happen that neither line segment hits the support.}
    \label{fig:no-2-hits}
\end{figure}

On the other hand, $u^f(x,v)+u^f(x,-v)=0$ for all $(x,v)\in S\Omega$ since $\xrt f=0$, so $u^f(x,-v)=0$ implies $u^f(x,v)=0$.
We have thus shown that $u^f(x,v)=0$ when $x\notin K$.
\end{proof}

\begin{ex}
\label{ex:compact-convex}
Suppose $\Omega\subset\R^n$ is a convex open set and $K\subset\Omega$ compact.
Show that the convex hull of~$K$ is compact and contained in~$\Omega$.
(Carath\'eodory's theorem can be useful.)
\end{ex}

\begin{ex}
\label{ex:convex-transversal}
Suppose $\Omega\subset\R^n$ is a bounded and convex~$C^1$ domain.
Suppose $(x,v)\in S\bar\Omega$ is such that $\phi_{\tau(x,v)}(x,v)$ is tangential to~$\partial\Omega$.
Show that $x\in\partial\Omega$ and $v\cdot\nu(x)=0$.
\end{ex}

We have chosen to work with compactly supported smooth functions to avoid technical difficulties.
The same method works for $f\in C^2(\bar\Omega)$ as well, with no assumptions on boundary values.
This would require more delicate analysis of boundary behaviour, since in general $u^f\notin C^2(S\bar\Omega)$ even if $f\in C^2(\bar\Omega)$.
In fact, without assuming $\xrt f=0$, one only has $u^f\in C^{1/2}(S\bar\Omega)$; see exercise~\ref{ex:u-characteristic}.

\begin{ex}
The integral function~$u^f$ was defined in~\eqref{eq:uf-def} for any function~$f$ on the sphere bundle.
Does lemma~\ref{lma:u-smooth} work also when $f\in C_c^\infty(S\Omega)$?
How does it help with smoothness that $f(x,v)$ is independent of $v$?
\end{ex}

We will next prove an integral identity.
The statement concerns second and first order derivatives, but the proof uses derivatives up to order four.
In cases like this the theorem can be shown to hold in~$C^2$ using the density of~$C^\infty$ in~$C^2$.

\subsection{The Pestov identity}

The key to proving uniqueness of~\eqref{eq:bvp} is an integral identity known as the Pestov identity.
It was introduced by Mukhometov, and it could well be called the Mukhometov--Pestov identity.

\begin{proposition}[Pestov identity]
\label{prop:pestov}
If $u\in C^\infty_c(S\Omega)$, then
\begin{equation}
\aabs{VXu}^2
=
\aabs{XVu}^2
+\aabs{Xu}^2.
\end{equation}
\end{proposition}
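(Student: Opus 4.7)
The plan is to combine the commutator identity $[XV,VX] = -X^2$ from Lemma~\ref{lma:XV,VX} with the integration-by-parts formulas from Lemma~\ref{lma:sm-ibp}. Since $u \in C^\infty_c(S\Omega)$, all boundary terms produced by integration by parts vanish identically, so I may move $X$ and $V$ freely across the inner product at the cost of a sign.

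First, I would compute $\|Xu\|^2$ in two different ways. The direct way is $\|Xu\|^2 = \langle Xu, Xu\rangle = -\langle u, X^2 u\rangle$ by one integration by parts in $X$. The indirect way uses the commutator identity rearranged as $X^2 = VXXV - XVVX$, which gives
\begin{equation}
\|Xu\|^2 = -\langle u, X^2u\rangle = \langle u, XVVX u\rangle - \langle u, VXXV u\rangle.
\end{equation}

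Second, I would evaluate each of the two terms on the right by peeling off operators with integration by parts. For the first term, move the outer $X$ to the left and then the $V$ to the left:
\begin{equation}
\langle u, XVVX u\rangle = -\langle Xu, VVXu\rangle = \langle VXu, VXu\rangle = \|VXu\|^2.
\end{equation}
For the second term, move the outer $V$ to the left and then one $X$ to the left:
\begin{equation}
\langle u, VXXV u\rangle = -\langle Vu, XXVu\rangle = \langle XVu, XVu\rangle = \|XVu\|^2.
\end{equation}
Combining the two computations of $\|Xu\|^2$ yields $\|Xu\|^2 = \|VXu\|^2 - \|XVu\|^2$, which rearranges to the Pestov identity.

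The bookkeeping is the only real obstacle: one must carefully track the order of $X$ and $V$ (they do not commute, so $XV \neq VX$) and keep the signs from each integration by parts straight. The hypothesis $u \in C^\infty_c(S\Omega)$ is what guarantees that no $\inwb$-integrals appear when applying Lemma~\ref{lma:sm-ibp}; since the proof only uses integration by parts and the algebraic identity of Lemma~\ref{lma:XV,VX}, no further geometric input (such as Santal\'o's formula) is needed beyond what these lemmas already package.
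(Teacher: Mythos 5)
Your proof is correct and is essentially the paper's own argument run in the opposite direction: the paper starts from $\aabs{VXu}^2-\aabs{XVu}^2$ and integrates by parts to reach $\ip{[XV,VX]u}{u}=\ip{-X^2u}{u}=\aabs{Xu}^2$, while you start from $\aabs{Xu}^2=-\ip{u}{X^2u}$ and expand using the same commutator identity and the same integration-by-parts lemma. The sign bookkeeping in your two-step peelings is right, so nothing further is needed.
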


\begin{proof}
Using lemmas~\ref{lma:sm-ibp} and~\ref{lma:XV,VX} we find
\begin{equation}
\begin{split}
\aabs{VXu}^2
-\aabs{XVu}^2
&=
\ip{VXu}{VXu}
-\ip{XVu}{XVu}
\\&=
-\ip{VVXu}{Xu}
+\ip{XXVu}{Vu}
\\&=
\ip{XVVXu}{u}
-\ip{VXXVu}{u}
\\&=
\ip{(XVVX-VXXV)u}{u}
\\&=
\ip{[XV,VX]u}{u}
\\&=
\ip{-X^2u}{u}
\\&=
\ip{Xu}{Xu}
\end{split}
\end{equation}
as claimed.
\end{proof}

As mentioned earlier, the assumption of compact support is not necessary.
It is enough that $u|_{\partial(S\Omega)}=0$, but the proof would be somewhat more technical.
Smoothness is not necessary either, it is just convenient.

\begin{ex}
Use the Santal\'o formula to rewrite~$\aabs{Xw}^2_{L^2(S\Omega)}$ when $w\in C_c^\infty(S\Omega)$.
What is the integral you end up calculating over each geodesic?
Can these integrals be simplified?
Two terms of this kind appear on the right-hand side of the Pestov identity, with both $w=Vu$ and $w=u$.
(If you are familiar with the index form, you may want to compare this situation to it. If this kind of analysis of the Pestov identity is done on a manifold, the connection to index forms becomes more clear and important.)
\end{ex}

\begin{bex}
Stare at the Pestov identity, experience enlightenment, and explain what it means and why it should hold true.
What, in your opinion, is the meaning of the Pestov identity?
(This exercise is not about what it can be used for; that is exercise~\ref{ex:xrt-with-pestov}. This is about what the identity tells in itself.)
\end{bex}

The Pestov identity makes proving our injectivity result easy:

\begin{theorem}
\label{xrtthm:pestov}
Let $\Omega\subset\R^2$ be a bounded, smooth, and strictly convex domain.
If $f\in C_c^\infty(\Omega)$ satisfies $\xrt f=0$, then $f=0$.
\end{theorem}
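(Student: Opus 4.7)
The plan is to use the integral function $u=u^f$ as the bridge between $\xrt f$ and $f$, and then extract vanishing of $f$ from the Pestov identity. Concretely, I would set $u = u^f$ defined by~\eqref{eq:uf-def}, and immediately invoke Lemma~\ref{lma:u-smooth} to conclude $u \in C^\infty_c(S\Omega)$. This step uses both smoothness of $f$ and the hypothesis $\xrt f = 0$ in an essential way: the vanishing of $\xrt f$ is what forces $u$ to have compact support inside $S\Omega$, which is what allows us to treat $u$ as a legitimate test function for the Pestov identity and the integration-by-parts lemmas.

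Next I would record the two key PDE facts about $u$. By the fundamental theorem of calculus along the geodesic flow (Exercise~\ref{ex:ftc-SM}), we have $Xu = -f$ in $S\Omega$. Since $f = \pi^*f$ depends only on $x$, applying $V$ gives $VXu = -Vf = 0$ (this is Exercise~\ref{ex:VXu=0}). So $u$ solves the homogeneous second-order equation $VXu = 0$ with $u \in C_c^\infty(S\Omega)$.

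The decisive step is to apply the Pestov identity (Proposition~\ref{prop:pestov}) to $u$:
\begin{equation*}
\aabs{VXu}^2 = \aabs{XVu}^2 + \aabs{Xu}^2.
\end{equation*}
The left-hand side vanishes because $VXu = 0$, so both summands on the right must vanish. In particular $\aabs{Xu}^2 = 0$, hence $Xu = 0$ as an $L^2$ function on $S\Omega$, and by continuity pointwise. Combining with $Xu = -f$, we obtain $f = 0$, which is the desired conclusion.

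The main obstacle I foresee is not in the algebraic manipulation but in verifying that $u^f$ has the regularity needed to feed it into Pestov. The identity was proved for $u \in C^\infty_c(S\Omega)$, and Lemma~\ref{lma:u-smooth} was designed precisely to supply this: compact support follows from the convex-hull argument combined with the key cancellation $u^f(x,v) + u^f(x,-v) = \xrt f(\text{line}) = 0$, while smoothness follows from the smooth implicit function theorem applied to $\tau$ near transversal exit points (Exercise~\ref{ex:tau-c1}) and from the support condition taking care of tangential exits. Once this regularity is in hand, the rest of the argument is a two-line application of Pestov and the transport equation.
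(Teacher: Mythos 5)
Your proposal is correct and is precisely the argument the paper intends (the proof is left there as an exercise: apply Proposition~\ref{prop:pestov} to $u^f$, using Lemma~\ref{lma:u-smooth} for the regularity and support, $VXu^f=0$ to kill the left-hand side, and $Xu^f=-f$ to conclude). You have also correctly identified that the only delicate point is the regularity of $u^f$, which is exactly what Lemma~\ref{lma:u-smooth} supplies.
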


\begin{ex}
\label{ex:xrt-with-pestov}
Prove theorem~\ref{xrtthm:pestov} by applying proposition~\ref{prop:pestov} to~$u^f$.
\end{ex}

This argument provides us with yet another uniqueness proof.
However, it does not give an inversion formula for the X-ray transform.
There are inversion formulas within this framework, but finding one requires considerably more work than proving uniqueness.

\subsection{Remarks about manifolds}
\label{sec:pestov-mfld}

This method can also be used to prove injectivity results on many Riemannian manifolds with boundary.
The sphere bundle and the derivatives on it are still well defined and useful.
The Santal\'o formula still holds true.
To be able to use the Pestov identity, the integral function~$u^f$ needs to be regular enough, and the right-hand side of the identity needs to be non-negative.

To obtain convenient regularity, the manifold is typically assumed to be compact and have strictly convex boundary.
Strict convexity is defined in terms of the curvature of the boundary.

The Pestov identity on a two-dimensional orientable Riemannian manifold~$M$ with boundary reads
\begin{equation}
\aabs{VXu}^2
=
\aabs{XVu}^2
+\aabs{Xu}^2
-\int_{SM}K\abs{Vu}^2\dd\Sigma
,
\end{equation}
where~$K$ is the Gaussian curvature of the surface.
If $K\leq0$, then the desired positivity result follows.
Indeed, the X-ray transform is injective on non-positively curves surfaces with strictly convex boundary.

In higher dimensions things are somewhat more complicated.
Vertical and horizontal derivatives are no longer given by vector fields, and gradient-like operators are needed instead.
In addition, curvature can no longer be adequately described with a scalar function.
However, there is a Pestov identity and it can be used to prove similar results.
Positivity now depends on the sectional curvature.

These results can be generalized in various ways.
Development and application of the relevant tools in differential geometry require a separate course.
See~\cite{I:geodesics,PSU-book}.

\qa

\section{X-ray tomography of vector fields}
\label{sec:vf}

\subsection{Definition of the X-ray transform}

So far we have only discussed X-ray tomography of scalar functions $f\colon\R^n\to\R$.
We can ask a similar question for other kinds of functions as well, and we will only explore one generalization in this course: X-ray tomography of vector fields.

A vector field in the Euclidean space is a function $f\colon\R^n\to\R^n$.
The integral of~$f$ over a line $\gamma\colon\R\to\R^n$ is the X-ray transform
\begin{equation}
\label{eq:vf-xrt}
\xrt f(\gamma)
=
\int_\R f(\gamma(t))\cdot\dot\gamma(t)\dd t
\end{equation}
whenever this integral exists.
(Some people call the transform something else in the case of vector fields. We do not. See bonus exercise~\ref{bex:doppler}.)
We will continue to use unit speed parametrization, although in this particular case it does not make a difference.
Those familiar with differential forms may identify a vector field with a one-form, and the integral of a $k$-form over an oriented $k$-dimensional manifold is parametrization invariant.

\begin{ex}
Prove that formula~\eqref{eq:vf-xrt} for the X-ray transform of a vector field is in fact invariant under any orientation-preserving reparametrization.

What happens if orientation is flipped?
Does the integral of a scalar function change if you reparametrize it or change orientation?
\end{ex}

We can now ask our main question in this new setting:
Does~$\xrt f$ determine~$f$ uniquely?
In other words, if $\xrt f=\xrt g$, do we then necessarily have $f=g$?
Is the X-ray transform~$\xrt$ injective when it operates on vector fields?

\subsection{An application}
\label{sec:doppler-application}

Generalizing mathematical questions is commonplace, and a mathematician may not need any further motivation for this variant of the problem.
While mathematical interest might be a sufficient reason for this detour, we will also present one physical application.
The applications of X-ray tomography of scalar and vector fields are not limited to what is mentioned in this course, and some applications call for further mathematical generalizations.

Consider a stationary flow of a liquid (or fluid), described by the flow field $u\colon\R^3\to\R^3$.
That is, at the point~$x$ the liquid flows with velocity~$u(x)$ at all times.
The speed of sound can be described by a scalar field $c\colon\R^3\to(0,\infty)$, but we assume that it is constant.
(Gravity causes position dependence to the speed of sound, and~$c$ can be coupled with~$u$ if the flow is compressible and large.)

If $\abs{u(x)}\ll c$ for all~$x$, then sound waves in the moving liquid travel at roughly straight lines, but their speeds along those lines are changed by~$u$.
(We can consider the flow to be a small perturbation to the completely still reference situation. Travel time has first order dependence on~$u$, 
but the change of trajectories only has a second order effect on it.
Therefore in the linearized problem geometry is unchanged but travel times change.
We will not attempt to make this linearization procedure precise.)

Consider a straight line $\gamma\colon[0,L]\to\R^3$ parametrized by arc length.
If $u=0$, then the time to travel from~$\gamma(0)$ to~$\gamma(L)$ is
\begin{equation}
\int_0^L \frac1{c}\dd s.
\end{equation}
The presence of~$u$ changes this to
\begin{equation}
\int_0^L \frac1{c+u(\gamma(s))\cdot\dot\gamma(s)}\dd s
\approx
\frac Lc
-
c^{-2}
\int_0^L u(\gamma(s))\cdot\dot\gamma(s)\dd s.
\end{equation}
The first term is the background travel time of the case $u=0$, the second term is the leading order deviation from the background.
Therefore the (linearized) travel time measurement determines~$\xrt u$.

Linearized travel time tomography often leads to X-ray tomography, but the unknown objects may or may not be scalar functions.
See theorem~\ref{thm:outlook-lin}.

The physical problem is then whether such time-of-flight measurements determine the flow field~$u$.
Does it help if the liquid is incompressible, which means $\nabla\cdot u=0$?

\begin{bex}
\label{bex:doppler}
The X-ray transform of vector fields is also known as the Doppler transform.
How is our physical example related to the Doppler effect?
\end{bex}

\subsection{Non-uniqueness and potentials}
\label{sec:non-u-pot}

It turns out that the answer to our main question is ``no'': a vector field~$f$ is not uniquely determined by~$\xrt f$.
There are vector fields~$f$ that are not identically zero but for which $\xrt f=0$.

The next best thing to ask for is a characterization of the kernel of the X-ray transform.
Can we characterize the set of those~$f$ for which $\xrt f=0$?

There is a special class of vector fields we study first: gradient fields.
If $h\colon\R^n\to\R$ is a smooth scalar function, then~$\nabla h$ is a smooth vector field.
Let us calculate the X-ray transform of such a vector field.

\begin{ex}
Let $h\colon\R^n\to\R$ be a smooth scalar function and $\gamma\colon[0,L]\to\R^n$ a line.
Show that
\begin{equation}
\int_0^L \nabla h(\gamma(t))\cdot\dot\gamma(t)\dd t
=
h(\gamma(L))-h(\gamma(0)).
\end{equation}
Explain why, if $h\in C^\infty_c(\R^n)$, then~$\xrt(\nabla h)$ is well defined and identically zero.
\end{ex}

This means that there is a freedom to change a vector field~$f$ to $f+\nabla h$ without changing~$\xrt f$ at all.
This is called a gauge freedom.

We now ask a refined question:
If a sufficiently nice vector field $f\colon\R^n\to\R^n$ satisfies $\xrt f=0$, then is there a scalar function~$h$ so that $f=\nabla h$?

The answer to this refined question is indeed positive, and we will prove it in one special case.
For simplicity, we will only prove the result in two dimensions.
In exercise~\ref{ex:2d-hd} we saw that for scalar functions the higher dimensional result follows from the one in dimension two.
The same argument works here, too:

\begin{ex}
Suppose this is known: If a compactly supported smooth vector field~$f$ on~$\R^2$ satisfies $\xrt f=0$, then there is a smooth compactly supported scalar function~$h$ on the plane so that $f=\nabla h$.

Show this: A smooth compactly supported vector field~$f$ on~$\R^3$ satisfies $\xrt f=0$ if and only if there is a smooth compactly supported scalar function~$h$ so that $f=\nabla h$.
(If you want, you can make use of the theorem that a compactly supported smooth vector field on~$\R^3$ is a gradient field of a compactly supported potential if and only if it has zero curl. Therefore it is enough to show that $\nabla\times f=0$. The same argument works in higher dimensions as well if one uses differential forms and the fact that the first de Rham cohomology group of the Euclidean space is zero.)
\end{ex}

In three dimensions one can write a vector field as a sum of a gradient field and a solenoidal (divergence-free) vector field in a unique way.
This is known as the Helmholtz decomposition.
There is an analogous decomposition in higher dimensions and also on manifolds, known as the Hodge decomposition.

The X-ray transform of the gradient component is always zero, but the rest is uniquely determined as we shall see.
This kind of result is known as solenoidal injectivity.
In particular, it follows that a solenoidal vector field is uniquely determined by its X-ray transform.
Our physical example problem is indeed uniquely solvable under the additional assumption that the flow is incompressible (solenoidal).

\subsection{Solenoidal injectivity}

We will now prove solenoidal injectivity in two dimensions by making use of the Pestov identity.

Let $\Omega\subset\R^2$ be a bounded, smooth, and strictly convex domain.
A vector field $f\colon\Omega\to\R^2$ can be regarded as a function~$\tilde f$ on~$S\R^2$ as $\tilde f(x,v)=f(x)\cdot v$.
We can define the integral function in two ways, by considering~$f$ as a function on~$S\Omega$ (see~\eqref{eq:uf-def} for a definition of~$u^{\tilde f}$ in terms of~$\tilde f$) or by using an integral formula like~\eqref{eq:vf-xrt}:
\begin{equation}
\label{eq:uf-def-vf}
u^f(x,v)
=
\int_0^{\tau(x,v)}f(x+tv)\cdot v\dd t.
\end{equation}
These two approaches lead to exactly the same function: $u^f=u^{\tilde f}$.
(These notes attempt to distinguish the vector field~$f$ on~$\Omega$ from the function~$\tilde f$ on~$S\Omega$ consistently, but be prepared for failures.)

One way to see the difference between~$f$ and~$\tilde f$ is relocation of complexity.
Our original~$f$ lives on the simpler domain~$\Omega$ but it is vector-valued.
The new function~$\tilde f$ lives on the more complicated domain~$S\Omega$ but is scalar-valued.
We can choose simplicity on either the domain or the target, but not both.

We assume that $\xrt f=0$.
An inspection of the proof of lemma~\ref{lma:u-smooth} shows that $u^f\in C_c^\infty(S\Omega)$ also in the case of vector fields.
The fundamental theorem of calculus of exercise~\ref{ex:ftc-SM} is still valid when~$f$ is seen as a function on~$S\Omega$.
The same proof gives that $Xu^f(x,v)=-\tilde f(x,v)=-f(x)\cdot v$.
However, now~$\tilde f$ does depend on direction, and so typically $V\tilde f\neq0$.
This causes a major change in our proof and result.

\begin{ex}
Let~$f$ be a smooth vector field on~$\R^2$, and define a function $\tilde f\colon S\R^2\to\R$ by $\tilde f(x,v)=f(x)\cdot v$.
Calculate~$V\tilde f(x,v)$ and interpret the result geometrically.
\end{ex}

\begin{ex}
\label{ex:vf-pestov-cancel}
Consider the function~$u^f$ on~$S\Omega$ defined in~\eqref{eq:uf-def-vf} for a smooth and compactly supported vector field~$f$ with $\xrt f=0$.
Show that $\aabs{VXu^f}=\aabs{Xu^f}$.
\end{ex}

\begin{theorem}
\label{thm:vf}
Let $\Omega\subset\R^2$ be a bounded, smooth and convex domain.
Let $f\colon\Omega\to\R^2$ be a compactly supported smooth vector field.
Then the following are equivalent:
\begin{enumerate}
\item $\xrt f=0$, i.e., the vector field integrates to zero over all lines.
\item There is $h\in C_c^\infty(\Omega)$ so that $f=\nabla h$.
\end{enumerate}
\end{theorem}

\begin{proof}
It was observed in section~\ref{sec:non-u-pot} that $\xrt(\nabla h)=0$, so it remains to prove the other direction.

As discussed above, the integral function~$u^f$ is compactly supported and smooth, so we may apply the Pestov identity:
\begin{equation}
\aabs{VXu^f}^2
=
\aabs{XVu^f}^2
+
\aabs{Xu^f}^2.
\end{equation}
By exercise~\ref{ex:vf-pestov-cancel}, this leads to
\begin{equation}
0
=
\aabs{XVu^f}^2.
\end{equation}
This implies that the function $XVu^f\in C_c^\infty(S\Omega)$ is identically zero.

Now it follows from exercise~\ref{ex:XV} that there is a scalar function $h\in C^\infty(\bar\Omega)$ so that $u^f=-\pi^*h$ or, in other words, $u^f(x,v)=-h(x)$.
The minus sign is just a matter of convenience so that another sign gets cancelled later.
As $u^f\in C_c^\infty(S\Omega)$, we must have $h\in C_c^\infty(\Omega)$ --- both vanish in a neighborhood of the boundary.

The last remaining step is to show that from $Xu^f=-\tilde f$ it follows that $f=\nabla h$.
The details are left as exercise~\ref{ex:Xu=dh}.
\end{proof}

\begin{ex}
\label{ex:Xu=dh}
Complete the proof above by showing that $f=\nabla h$.
\end{ex}

In the proof above we needed to produce a potential~$h$ for the vector field~$f$.
Finding the potential was easy, proving that it was a scalar was hard; this method of first relaxing one's definition and then restricting the conclusion at the end is not an uncommon method.
The potential turned out to be essentially the integral function~$u^f$.
This is a function on the sphere bundle~$S\Omega$, and the core of the proof was the conclusion that $u^f=-\pi^*h$ for some scalar function~$h$.

\begin{ex}
Let~$f_0$ be a scalar function and~$f_1$ a vector field.
Their sum is not a very reasonable object at first, and it can be considered just as a formal sum.
How can you consider $f=f_0+f_1$ as a function on~$S\R^n$?
How should we define $\xrt f$?
What does reversing orientation of~$\gamma$ do to~$\xrt f(\gamma)$?

Assume now that~$f_0$ and~$f_1$ are smooth and compactly supported.
Using previously obtained results, argue why $\xrt(f_0+f_1)=0$ implies that $f_0=0$ and $f_1=\nabla h$ for some $h\in C_c^\infty(\R^n)$.
(It is possible to use the Pestov identity to prove results like this, but here it is easier to study orientation reversals and apply theorems~\ref{xrtthm:pestov} and~\ref{thm:vf}.)
\end{ex}

Let us then see what this solenoidal injectivity result means for reconstructing a vector field from data.
If~$f$ and~$g$ are vector fields (or sums of scalars and vector fields) and $\xrt f=\xrt g$, then there is a scalar potential~$h$ vanishing at the boundary so that $f=g+\nabla h$ (and the scalar parts of~$f$ and~$g$ coincide).

We chose to use the Pestov identity, but it is not the only way to prove this statement in a Euclidean space.
We remark that the same proof works for non-positively curved Riemannian manifolds of dimension two with strictly convex boundary.

Let us see what happens in one dimension.
As we have seen before, the X-ray transform is not injective on one dimension scalar functions.
There are non-trivial continuous functions $f\colon[0,1]\to\R$ so that the X-ray transform vanishes.

In one dimension scalar functions and vector fields are essentially the same object, but the solenoidal injectivity requires less than full injectivity.
The conclusion might be surprising: we have no injectivity for scalars, but we do have solenoidal injectivity for vector fields.

\begin{ex}
Consider the one-dimensional set $(0,1)$ or its closure.
What does it mean if the X-ray transform is solenoidally injective on this space?
Prove this solenoidal injectivity.
\end{ex}

\begin{bex}
Use the tools of section~\ref{sec:torus} to prove solenoidal injectivity on the torus~$\T^n$, $n\geq2$, for smooth vector fields.
You can write a vector field on~$\T^n$ as a function $f\colon\T^n\times\R^n\to\C$ which is linear in the second variable.
You will need the lemma that if a linear function $\phi\colon\R^n\to\C$ vanishes in all directions orthogonal to $k\in\R^n$, then there is $a\in\C$ so that $\phi(v)=ak\cdot v$.
It may help (or confuse) to look at~\cite{I:torus} where this is done in more generality.

As before, solenoidal injectivity on a domain $\Omega\subset\R^2$ follows.
What changes in this transition from a result on the torus to a result on the plane in comparison to the scalar case?
\end{bex}

\subsection{A cohomological alternative}

There is an alternative proof of theorem~\ref{thm:vf} based on cohomology.
Namely, we need to know that the first de Rham coholomogy group of the Euclidean space is trivial, which is the content of the Poincar'e lemma.
We give a concise proof without dwelling on the new concepts mentioned.

\begin{proof}[Second proof of theorem~\ref{thm:vf}]
We think of the vector field~$f$ as a one-form in the whole plane, supported in~$\Omega$.\footnote{This is not necessary, but makes the language a bit lighter. Corresponding operations for vector fields are available.}

Take any line~$\gamma$ that meets~$\Omega$.
For $h>0$, let~$\gamma_h$ be a shifted and reversed copy of~$\gamma$ so that the distance between the two lines is~$h$.
Form a rectangle~$A^\gamma_h$ by taking these two lines and joining them with two length~$h$ segments outside~$\Omega$.
This construction is illustrated in figure~\ref{fig:cohomology-loop}.

\begin{figure}[t]
    \centering
    \includegraphics[scale=0.4,trim={3cm 0 3cm 0},clip,page=26]{xrt-figs1c.pdf}
    \caption{The original line $\gamma$ and \viher{its reversed copy~$\gamma_h$} are used to construct a rectangular loop in the plane. Instead of continuing the lines to infinity, they are joined together with \puna{short line segments}. This has no effect on the line integrals, as the manipulation happens outside \sini{the support of the vector field or one-form~$f$}.}
    \label{fig:cohomology-loop}
\end{figure}

The integral of~$f$ vanishes over~$\gamma$ and~$\gamma_h$ by assumption and these lines differ from the boundary of~$A_h^\gamma$ only outside the support of the vector field.
Therefore~$f$ integrates to zero over this boundary.
Using Stokes' theorem, we find that
\begin{equation}
\int_{A_h}
\der f
=
\int_{\partial A_h^\gamma} f
=
0.
\end{equation}
The exterior derivative~$\der f$ is a 2-form, and it may be identified with the scalar function~$\star\der f$ using the Hodge star~$\star$.

Letting $h\to0$ collapses the rectangle to~$\gamma$, so we find
\begin{equation}
0
=
\frac{1}{h}
\int_{A_h}
\star\der f
=
\xrt(\star\der f)(\gamma)
.
\end{equation}
This is true for any~$\gamma$, so $\xrt(\star\der f)=0$.
By any of the multiple theorems we have proven for scalar functions, it follows that $\star\der f=0$ and thus that $\der f=0$.
The differential form~$f$ is thus closed.
By the Poincar\'e lemma the closed differential form~$f$ is exact, meaning that $f=\der h$ for a scalar function.
Adding a constant to~$h$ so that it vanishes at some point outside~$\Omega$ we get it to vanish everywhere outside~$\Omega$.
Thus $f=\der h$ with~$h$ compactly supported in~$\Omega$, as claimed.
\end{proof}

\subsection{Higher order tensor fields}

We have studied X-ray tomography for symmetric covariant tensor fields of order~$0$ (scalar functions) and~$1$ ((co)vector fields).
One can study the same problem for tensor fields of any order $m\in\N$.

When $m=0$, the left-hand side of the Pestov identity vanishes.
When $m=1$, the term on the left exactly cancels a term on the right.
When $m\geq2$, the term on the left is typically larger than the corresponding one on the right, so our idea of proof no longer works as such.
An important new ingredient for $m\geq2$ is to write $u^f\in C^\infty(S\Omega) = C^\infty(\Omega\times \Sphere^1)$ as a Fourier series on $\Sphere^1=\T^1$.
We will not pursue this here, but some more details will be given in section~\ref{sec:outlook-tt}.

\qa

\section{The Fourier transform}
\label{sec:ft}

The purpose of this section is to build tools, and we will not be touching the X-ray transform at all.

\subsection{A general view to Fourier transforms}

Previously we studied the Fourier transform on a torus~$\T^n$.
The Fourier transform took a function on the torus~$\T^n$ to a function on the lattice~$\Z^n$, and the inverse Fourier transform did the opposite.
One could in fact define a the Fourier transform on the lattice, and that would turn out to be essentially the same as the inverse Fourier transform for the torus.

In this section we will study the Fourier transform on~$\R^n$.
It will take a function $\R^n\to\C$ to another function $\R^n\to\C$, and the inverse transform is very similar to the transform itself.
Before going any deeper into this, we will look at the Fourier transforms in more generality to see that the two Fourier transforms in this course are merely two special cases of a far more general structure.

Let~$G$ be a topological group.
It means that it is a topological space and a group so that the group operations $G\to G$, $x\mapsto x^{-1}$ and $G\times G\to G$, $(x,y)\mapsto xy$ are continuous.
We assume that~$G$ is abelian and locally compact; these assumptions substantially help give the theory some structure.

Let~$\hat G$ denote the set of all continuous homomorphisms $G\to \Sphere^1$.
Here $\Sphere^1\subset\C$ is considered as the multiplicative group of unit complex numbers.
Elements of~$\hat G$ are called characters of~$G$.

Let us compare the definitions of~$\hat G$ for a locally compact abelian group~$G$ and the dual~$E^*$ of a topological real vector space~$E$:
\begin{equation}
\begin{split}
\hat G
&=
\{
\phi\colon G\to\Sphere^1;
\phi
\text{ is a continuous homomorphism}
\},
\\
E^*
&=
\{
\phi\colon E\to\R;
\phi
\text{ is a continuous linear map}
\}.
\end{split}
\end{equation}
The two concepts are clearly analogous.
The map~$\phi$ has to be a morphism of the relevant category, respecting the topology and other structures.
The biggest difference is in the choice of the ``reference space''; one uses $\Sphere^1=U(1)=\T^1=\R/2\pi\Z$ and the other one uses~$\R$.

The set~$\hat G$ can be endowed with a group structure by pointwise multiplication of characters: $(\alpha\beta)(x)=\alpha(x)\beta(x)$.
The set~$\hat G$ is a set of functions, and it can be equipped with the topology of locally uniform convergence.
These structures make~$\hat G$ into a topological group, just like~$G$ itself.

What is important is that~$\hat G$ is also a locally compact abelian group and that~$\hat{\hat{G}}$ is naturally isomorphic to~$G$.
This result is known as the Pontryagin duality theorem and~$\hat G$ is called the dual group of~$G$.

The Fourier transform takes a function on~$G$ into a function on~$\hat G$, and the inverse Fourier transform reverses this.
More precisely, the Fourier transform of $f\colon G\to\C$ is the function $\ft f\colon\hat G\to\C$ defined by
\begin{equation}
\ft f(\alpha)
=
\int_G\alpha(x)f(x)\dd x
\end{equation}
when this integral exists, possibly with a normalization constant or complex conjugation of the character.
The integral is with respect to a Haar measure, a translation invariant Radon measure.
The Haar measure is unique up to a multiplicative constant.
In light of the duality theorem, it is not surprising that the inverse Fourier transform for~$G$ resembles the Fourier transform for~$\hat G$.
The Fourier transform requires a measure on the underlying space, and that is the Haar measure.
When~$G$ and~$\hat G$ are equipped with compatible Haar measures, the~$L^2$ theory (and much more) of Fourier transforms can be extended to any locally compact abelian groups.

Let us make the dual groups a little more concrete with examples.
The dual group of~$\T^n$ is~$\Z^n$ and vice versa, and this we encountered earlier with Fourier series.
The dual group of~$\R^n$ is~$\R^n$ itself, and this we will study now.
The measures on~$\T^n$ and~$\R^n$ are the Lebesgue measures, and the one on the lattice is the counting measure.

\begin{ex}
\label{ex:character1}
An element $k\in\Z^n$ can be identified with a character $\chi_k\colon\T^n\to \Sphere^1$ by $\chi_k(x)=e^{ik\cdot x}$.
(Or $\chi_k(x)=e^{-ik\cdot x}$ if you prefer so; flipping the sign makes the Fourier transform look more familiar. One can also choose to take a complex conjugate of the character. There is no way to make all the sings perfectly convenient.)

Show that for any $k\in\Z^n$ the corresponding character~$\chi_k$ is indeed a well defined and continuous homomorphism.
Recall that $\T^n=\R^n/2\pi\Z^n$.

How can you identify a point~$x$ on the torus~$\T^n$ with a character $\psi_x\in\widehat{\Z^n}$?
No need to prove anything; just give the formula.
\end{ex}

\begin{bex}
Let us prove that indeed
$\widehat{\T^n}\approx\Z^n$,
$\widehat{\Z^n}\approx\T^n$, and
$\widehat{\R^n}\approx\R^n$.
Coming up with characters identifiable with the desired space is simple enough, especially after exercise~\ref{ex:character1}.
In this problem it suffices to prove that all characters are of the desired form.

Show first that the general statement for $n\geq2$ follows once the results are known for $n=1$.
Then prove the results when $n=1$.
\end{bex}

Note that these characters were used in the formulas for the Fourier transform and its inverse on the torus.
This is how Fourier transforms work in general, by integrating a function against a character.

If~$G$ is not abelian, then~$\hat G$ should be replaced with the set of all (equivalence classes of) irreducible representations of~$G$.
This coincides with the dual group in the abelian case since irreducible complex representations of abelian groups are one-dimensional.
Moreover, one-dimensional representations coincide with their characters, so the characters introduced here are the same as the representation theoretic characters.
Fourier analysis on non-abelian groups is possible via representation theory.

Finally, we remark that there are several different conventions for the Fourier transform on a torus or a Euclidean space.
The differences concern the placement of factors of~$2\pi$.
It is impossible to get completely rid of the factors.

\subsection{The Fourier transform on a Euclidean space}

It is typical to call the Fourier transform on a torus the Fourier series and the one on a Euclidean space the Fourier transform.
Fourier analysis on other groups is much rarer.

The Fourier transform of a function $f\colon\R^n\to\C$ is $\ft f\colon\R^n\to\C$ defined by
\begin{equation}
\ft f(\xi)
=
\int_{\R^n}e^{-i\xi\cdot x}f(x)\dd x
\end{equation}
whenever this integral makes sense.
Again, we are purposely vague since the definition can be extended to various classes of functions or distributions.

\begin{theorem}
\label{thm:ft}
The Fourier transform is a bijection $\ft\colon L^2(\R^n)\to L^2(\R^n)$, given by
\begin{equation}
\ft f(\xi)
=
\int_{\R^n}e^{-i\xi\cdot x}f(x)\dd x
\end{equation}
for $f\in L^1(\R^n)\cap L^2(\R^n)$ and extended by continuity to the rest of~$L^2(\R^n)$.

The inverse Fourier transform $\ft^{-1}\colon L^2(\R^n)\to L^2(\R^n)$ is given by
\begin{equation}
(\ft^{-1}f)(x)
=
(2\pi)^{-n}
\int_{\R^n}e^{i\xi\cdot x}f(\xi)\dd \xi,
\end{equation}
interpreted in a suitable limiting sense when $f\notin L^1(\R^n)$.
The Fourier transform is unitary in the sense that
\begin{equation}
\int_{\R^n}\overline{g(x)}f(x)\dd x
=
(2\pi)^{-n}
\int_{\R^n}\overline{\ft g(\xi)}\ft f(\xi)\dd \xi.
\end{equation}
\end{theorem}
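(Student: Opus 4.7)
The plan is to establish the theorem first on a convenient dense subspace and then extend by continuity. The most natural choice is the Schwartz space $\mathcal{S}(\R^n)$ of rapidly decaying smooth functions, since $\mathcal{S}\subset L^1\cap L^2$, $\mathcal{S}$ is dense in $L^2(\R^n)$, and the two fundamental identities
\begin{equation}
\ft(\partial_j f)(\xi)=i\xi_j\ft f(\xi),\qquad \ft(x_jf)(\xi)=i\partial_j\ft f(\xi)
\end{equation}
together with differentiation under the integral sign show that $\ft$ maps $\mathcal{S}$ into itself. These identities also make it plausible that the $\ft$-image of a Gaussian is again a Gaussian, and indeed the first concrete computation I would do is to verify
\begin{equation}
\ft(e^{-\abs{x}^2/2})(\xi)=(2\pi)^{n/2}e^{-\abs{\xi}^2/2}
\end{equation}
by reducing to $n=1$ and using the ODE $g'(\xi)=-\xi g(\xi)$ satisfied by the one-dimensional transform; this pins down the $(2\pi)^{-n}$ normalization that will appear in the inversion and Plancherel formulas.

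Next I would prove the Fourier inversion formula on $\mathcal{S}$. The cleanest route is a Gaussian regularization argument: for $f\in\mathcal{S}$ and $\eps>0$ write
\begin{equation}
I_\eps(x)=(2\pi)^{-n}\int_{\R^n}e^{i\xi\cdot x}e^{-\eps\abs{\xi}^2/2}\ft f(\xi)\dd\xi,
\end{equation}
substitute the definition of $\ft f$, apply Fubini (legal because of the damping factor), and evaluate the inner $\xi$-integral using the Gaussian computation above. The result expresses $I_\eps$ as the convolution of $f$ with an approximate identity of the form $\eps^{-n/2}\phi(\cdot/\sqrt\eps)$, and standard approximation-to-the-identity gives $I_\eps\to f$ uniformly as $\eps\to 0$. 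On the other hand, dominated convergence gives $I_\eps(x)\to (2\pi)^{-n}\int e^{i\xi\cdot x}\ft f(\xi)\dd\xi$, yielding the inversion formula on $\mathcal{S}$.

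With inversion in hand, the unitarity (Plancherel/Parseval) identity for $f,g\in\mathcal{S}$ follows by a short formal manipulation: using inversion to write $f=\ft^{-1}\ft f$ and inserting the definition of $\ft^{-1}$, a Fubini swap against $\bar g$ produces precisely
\begin{equation}
\int_{\R^n}\overline{g(x)}f(x)\dd x=(2\pi)^{-n}\int_{\R^n}\overline{\ft g(\xi)}\ft f(\xi)\dd\xi.
\end{equation}
This identity says that $(2\pi)^{-n/2}\ft$ is an isometry from the dense subspace $\mathcal{S}\subset L^2(\R^n)$ into $L^2(\R^n)$, so it extends uniquely to a bounded linear isometry on all of $L^2(\R^n)$; the same extension agrees with the integral formula on $L^1\cap L^2$ by dominated convergence. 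Applying the same construction to $\ft^{-1}$ gives a continuous extension that remains a two-sided inverse on the dense set $\mathcal{S}$ and hence on all of $L^2(\R^n)$ by continuity, proving bijectivity.

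The main technical obstacle is the inversion formula on $\mathcal{S}$: one cannot simply iterate the defining integral because $\int e^{i\xi\cdot(x-y)}\dd\xi$ does not converge, so the Gaussian regularization (or a similar summability method) is genuinely necessary, and it is there that one must justify the Fubini swap and the identification of the limit as an approximate identity. Everything else is either a routine density-and-continuity argument or an explicit computation with Gaussians.
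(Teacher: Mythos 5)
The paper does not actually prove this theorem --- it states it and immediately says the proof will be omitted --- so there is no in-text argument to compare yours against. Judged on its own, your proposal is the standard and correct route: establish everything on the Schwartz space $\mathcal{S}(\R^n)$, pin down the normalization with the Gaussian $e^{-\abs{x}^2/2}$, prove inversion by Gaussian regularization and an approximate-identity argument, deduce the Plancherel/Parseval identity, and extend $(2\pi)^{-n/2}\ft$ by density to a unitary map on $L^2(\R^n)$ whose inverse is the continuous extension of $\ft^{-1}$. The exchange identities $\ft(\partial_j f)(\xi)=i\xi_j\ft f(\xi)$ and $\ft(x_jf)(\xi)=i\partial_j\ft f(\xi)$ are stated with the correct signs for the convention $\ft f(\xi)=\int e^{-i\xi\cdot x}f(x)\dd x$, and you correctly identify the only genuinely delicate step, namely that the inversion formula cannot be obtained by a naive Fubini swap and requires a summability method. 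The one place where a full write-up would need a little more care is the claim that the $L^2$-extension agrees with the integral formula on all of $L^1\cap L^2$: the clean argument is to approximate $f\in L^1\cap L^2$ by Schwartz functions converging simultaneously in $L^1$ and in $L^2$, so that the pointwise-defined transforms converge uniformly while the abstract extension converges in $L^2$, forcing the two to coincide almost everywhere. With that small elaboration the argument is complete.
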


Again, the proof will be omitted.

\begin{ex}
What is the relation between~$\aabs{f}_{L^2}$ and~$\aabs{\ft f}_{L^2}$?
\end{ex}

To simplify matters, we will apply the Fourier transform to compactly supported continuous functions.
What we need to know is that $\ft f=0$ implies $f=0$.
Under additional assumptions very little information on~$\ft f$ is needed to conclude that $f=0$, and we will study this next.

\subsection{A Paley--Wiener theorem}

A general and important phenomenon in Fourier analysis is the correspondence between decay and regularity.
Fast decay of~$f(x)$ as $\abs{x}\to\infty$ corresponds to high regularity of~$\ft f$ and vice versa.
For a famous example, the Schwartz space contains by definition functions which have high regularity (infinitely differentiable) and fast decay (all derivatives vanish faster than $\abs{x}^{-N}$ for any $N\in\N$), and the Fourier transform of the Schwartz function space is precisely the space itself.

We will study the ultimate form of decay at infinity: compact support.
This should lead to very high regularity, and that turns out to be the case.
Our theorem in this subsection is a version of the Paley--Wiener theorem.

\begin{definition}
A function $f\colon\Omega\to\C$ defined on an open set $\Omega\subset\R^n$ is called real analytic if it is smooth and for every point $x\in\R^n$ there is $r>0$ so that the Taylor series of~$f$ around~$x$ converges pointwise to~$f$ in $B(x,r)$.
\end{definition}

The mode of convergence is not very important.
If we had required uniform convergence, we would have ended up with an equivalent definition.
But the feature that the Taylor series converges to the function itself instead of possibly something else is crucial; see exercise~\ref{ex:non-analytic}.

In complex analysis one can define analyticity in a similar fashion by demanding that a complex Taylor series converges to the function in a small neighborhood of any point.
This turns out to be equivalent with complex differentiability (the existence of the derivative as a limit of a difference quotient).
When working over the reals this is no longer the case; real analyticity is far stronger than real differentiability.

\begin{ex}
\label{ex:analytic-open}
Show that if a real analytic function $f\colon\R^n\to\C$ vanishes in a non-empty open set $U\subset\R^n$, then~$f$ is identically zero.
If you are unfamiliar with multidimensional Taylor series, feel free to take $n=1$; all the essentials are already present in one dimension.

See exercise~\ref{ex:taylor} for two versions of the Taylor polynomial.
\end{ex}

\begin{ex}
\label{ex:non-analytic}
Define the function $f\colon\R\to\R$ by
\begin{equation}
f(x)
=
\begin{cases}
0, & x\leq0 \\
\exp(-1/x), & x>0.
\end{cases}
\end{equation}
Consider it known that $f\in C^\infty(\R)$.
You may also consider it known that the composition of two analytic functions is analytic.

Explain and justify (or prove):
For any $x\in\R$ the Taylor series of~$f$ at~$x$ converges in some open neighborhood of~$x$.
However,~$f$ is not real analytic.
\end{ex}

The main result of this section is this:

\begin{theorem}[Paley--Wiener theorem]
\label{thm:pw}
The Fourier transform of a compactly supported function in $L^1(\R^n)$ is real analytic.
\end{theorem}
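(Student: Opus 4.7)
The plan is to show directly that $\ft f$ has a convergent Taylor series around every $\xi_0\in\R^n$. Fix $R>0$ with $\spt(f)\subset\bar B(0,R)$; this is the only place compact support enters, and it gives uniform control on $x$ in the integrand.

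First I would write $e^{-i\xi\cdot x}=e^{-i\xi_0\cdot x}e^{-i(\xi-\xi_0)\cdot x}$ and expand the second factor via the multinomial theorem:
\begin{equation}
e^{-i(\xi-\xi_0)\cdot x}
=
\sum_{\alpha\in\N^n}\frac{(-i)^{|\alpha|}}{\alpha!}(\xi-\xi_0)^\alpha x^\alpha.
\end{equation}
On $\spt(f)$ the absolute value of the $\alpha$-term is at most $R^{|\alpha|}|\xi-\xi_0|^{|\alpha|}/\alpha!$, and the identity $\sum_{|\alpha|=k}1/\alpha!=n^k/k!$ (obtained by plugging $(1,\dots,1)$ into the multinomial formula) gives the $x$-independent bound $e^{nR|\xi-\xi_0|}$ on the series of absolute values. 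By the Weierstrass M-test the series converges uniformly in $x\in\bar B(0,R)$, so I may swap sum and integral against $e^{-i\xi_0\cdot x}f(x)$ to obtain
\begin{equation}
\ft f(\xi)
=
\sum_{\alpha\in\N^n}c_\alpha(\xi-\xi_0)^\alpha,
\qquad
c_\alpha = \frac{(-i)^{|\alpha|}}{\alpha!}\int_{\R^n}x^\alpha e^{-i\xi_0\cdot x}f(x)\dd x.
\end{equation}

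Estimating $|c_\alpha|\leq R^{|\alpha|}\aabs{f}_{L^1}/\alpha!$ and summing as before shows this power series converges absolutely at every $\xi\in\R^n$, with $|\ft f(\xi)|\leq e^{nR|\xi-\xi_0|}\aabs{f}_{L^1}$. In particular it converges in some ball around $\xi_0$, which is exactly real analyticity at $\xi_0$. Since $\xi_0$ was arbitrary, $\ft f$ is real analytic everywhere.

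There is no substantive obstacle: the proof reduces to one legitimate exchange of sum and integral, justified by uniform convergence on the compact set $\spt(f)$. The only mildly technical point is the multi-index bookkeeping needed to recognize the result as a genuine power series in $\xi-\xi_0$ rather than a double series. An alternative route would be to extend $\ft f$ to all of $\C^n$ by the same integral formula, verify holomorphy through differentiation under the integral sign, and invoke the fact that holomorphic functions of several complex variables restrict to real-analytic functions on $\R^n$; but the direct Taylor expansion above is more self-contained and requires no multivariable complex analysis.
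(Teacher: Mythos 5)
Your proof is correct and is essentially the paper's argument: both expand $e^{-i(\xi-\xi_0)\cdot x}$ in a power series and use the compact support of $f$ to justify exchanging sum and integral. The only real difference is in the final bookkeeping: the paper identifies the resulting terms with the directional derivatives $((\xi-\xi_0)\cdot\nabla)^k\ft f(\xi_0)$ via differentiation under the integral sign, so that the series is literally the Taylor series required by its definition of real analyticity, whereas you organize the terms by multi-index into a power series in $\xi-\xi_0$ and invoke the (standard, but worth stating explicitly given the paper's Taylor-series definition) equivalence between admitting a locally convergent power-series representation and being real analytic.
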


The converse implication is also true, and often considered a part of the Paley--Wiener theorem, but it is unimportant for our needs here.

In light of exercise~\ref{ex:non-analytic}, it is not enough to estimate the derivatives to establish a positive radius of convergence for the Taylor series.
We really need to show that the limit is correct.

Let us collect some tools before the proof.
First, recall a lemma from measure and integration theory:

\begin{lemma}
\label{lma:d-int}
Fix integers $1\leq j\leq n$.
Consider a function $g\colon\R^n\times\R^n\to\C$.
Suppose that for every $y\in\R^n$ we have $g(\dummy,y)\in L^1(\R^n)$,
that for every $x\in\R^n$ and $y\in\R^n$ the partial derivative~$\partial_{y_j}g(x,y)$ exists,
and that there is a function $h\in L^1(\R^n)$ so that $\abs{\partial_{y_j}g(x,y)}\leq h(x)$ for all $x\in\R^n$ and $y\in\R^n$.
Then the function
\begin{equation}
G(y)
=
\int_{\R^n}g(x,y)\dd x
\end{equation}
has the partial derivative~$\partial_{y_j}G(y)$ everywhere and
\begin{equation}
\partial_{y_j}G(y)
=
\int_{\R^n}\partial_{y_j}g(x,y)\dd x,
\end{equation}
where the last integral is a well-defined Lebesgue integral.
\end{lemma}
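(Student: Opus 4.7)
The plan is to analyze the difference quotient at an arbitrary point $y_0\in\R^n$:
\begin{equation*}
\frac{G(y_0+te_j)-G(y_0)}{t}
=
\int_{\R^n}\frac{g(x,y_0+te_j)-g(x,y_0)}{t}\dd x,
\end{equation*}
where $e_j$ denotes the $j$th standard basis vector of~$\R^n$ and $t\in\R\setminus\{0\}$. The interchange of the integrand and the integral here is automatic since the hypothesis $g(\dummy,y)\in L^1(\R^n)$ gives existence of $G(y_0)$ and $G(y_0+te_j)$ separately, and linearity of the integral handles the difference. The goal is to show that the right-hand side converges as $t\to0$ to $\int_{\R^n}\partial_{y_j}g(x,y_0)\dd x$, and the natural tool is Lebesgue's dominated convergence theorem.

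Pointwise convergence of the integrand to $\partial_{y_j}g(x,y_0)$ for each fixed~$x$ is guaranteed by the assumption that the partial derivative exists everywhere. What remains is to produce a $t$-independent integrable dominant. For this I would apply the mean value theorem to the real and imaginary parts of $s\mapsto g(x,y_0+se_j)$ on the interval with endpoints $0$ and~$t$. This yields points $\xi_1,\xi_2$ on the segment between $y_0$ and $y_0+te_j$ with
\begin{equation*}
\abs{\frac{g(x,y_0+te_j)-g(x,y_0)}{t}}
\leq
\abs{\partial_{y_j}\mathrm{Re}\,g(x,\xi_1)}
+
\abs{\partial_{y_j}\mathrm{Im}\,g(x,\xi_2)}
\leq
2h(x).
\end{equation*}
Since $h\in L^1(\R^n)$, dominated convergence applies and delivers simultaneously the existence of $\partial_{y_j}G(y_0)$ and the claimed integral formula. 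Measurability in~$x$ of $\partial_{y_j}g(\dummy,y_0)$ follows for free as the pointwise limit of measurable difference quotients.

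The main technical subtlety is the handling of the complex-valued case, since the classical mean value theorem fails for $\C$-valued functions. Splitting into real and imaginary parts, as above, circumvents this cleanly and introduces only a harmless factor of~$2$ in the dominant. Beyond that, the proof is essentially a direct appeal to dominated convergence, and the compactness of support or any other regularity plays no role; the only ingredient that is actually doing work is the uniform integrable bound~$h$.
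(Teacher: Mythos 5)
Your proof is correct. Note that the paper does not actually prove this lemma --- it is recalled as a known fact from measure and integration theory before the proof of the Paley--Wiener theorem --- but your argument is precisely the standard one: dominated convergence applied to the difference quotients, with the mean value theorem (applied separately to real and imaginary parts to avoid the failure of the MVT for $\C$-valued functions) supplying the $t$-independent dominant $2h$. The only cosmetic point worth flagging is that to invoke dominated convergence for the continuous limit $t\to0$ one formally passes to an arbitrary sequence $t_k\to0$, which your argument accommodates without change.
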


\begin{proof}[Proof sketch]
Fix any~$y$.
Let~$e_j$ be the $j$th basis vector of $\R^n$ and define $f_k(x)=k[g(x,y+e_j/k)-g(x,y)]$.
These are measurable functions which converge pointwise to $\partial_{y_j}g(x,y)$ as $k\to\infty$.
Now the difference quotient of~$G$ at~$y$ has the expected limit due to the intermediate value theorem and Lebesgue's dominated converge theorem.
\end{proof}

\begin{ex}
\label{ex:ft-der-poly}
Suppose $f\in L^1(\R^n)$ vanishes outside a compact set~$K$.
Denote by~$f_j$ the function $f_j(x)=x_jf(x)$.
Show that $\partial_{\xi_j}\ft f(\xi)=-i\ft f_j(\xi)$ and the partial derivative exists everywhere.
\end{ex}

Similarly, one can find that for any vector $v\in\R^n$ one has
\begin{equation}
\label{eq:ft-der}
v\cdot\nabla\ft f(\xi)
=
-i\ft f_v(\xi),
\end{equation}
where $f_v(x)=(v\cdot x) f(x)$.

\begin{ex}
\label{ex:ft-smooth}
Suppose $f\in L^1(\R)$ vanishes outside a compact set~$K$.
Argue that $\ft f\in C^\infty$.
(The same result holds in~$\R^n$ for any~$n$.)
\end{ex}

\begin{ex}
\label{ex:ft-translate}
Suppose $f\in L^1(\R^n)$ vanishes outside a compact set~$K$.
Show that $\ft f(\xi)=\ft g(\zeta)$, where $g(x)=e^{i(\zeta-\xi)\cdot x}f(x)$.
\end{ex}

\begin{proof}[Proof of theorem~\ref{thm:pw}]
Let $f\colon\R^n\to\C$ be compactly supported and integrable.
Repeated application of equation~\eqref{eq:ft-der} gives
\begin{equation}
\label{eq:vv11}
(v\cdot\nabla)^m\ft f(\xi)
=
(-i)^m\ft(\mu_v^mf)(\xi),
\end{equation}
where~$\mu_v^m$ is the multiplication operator defined by $(\mu_v^mf)(x)=(v\cdot x)^m f(x)$.
Notice that~$v\cdot\nabla$ is a derivative in the direction $v\in\R^n$, and it makes sense to take the~$m$th iterated derivative.
By exercise~\ref{ex:ft-smooth} all these derivatives exist.

Take any $\rho\in\C^n$.
We have $e^{\rho\cdot x}=\sum_{k\in\N}\frac1{k!}(\rho\cdot x)^k$.
It is clear that each partial sum is dominated by $\sum_{k\in\N}\frac1{k!}\abs{\rho\cdot x}^k=e^{\abs{\rho\cdot x}}$.
This majorant is uniformly bounded when $x\in K$ and~$\abs{\rho}$ is bounded.

Therefore, by the dominated convergence theorem and exercise~\ref{ex:ft-translate} with $\zeta=\xi_0$,
\begin{equation}
\begin{split}
\ft f(\xi)
&=
\ft g(\xi_0)
\\&=
\ft\left(
\sum_{k=0}^\infty\frac{i^k}{k!}\mu_{\xi-\xi_0}^kf
\right)
(\xi_0)
\\&=
\sum_{k=0}^\infty\frac{i^k}{k!}\ft(\mu_{\xi-\xi_0}^kf)(\xi_0).
\end{split}
\end{equation}
Applying~\eqref{eq:vv11} to each term gives
\begin{equation}
\ft f(\xi)
=
\sum_{k=0}^\infty\frac1{k!}((\xi-\xi_0)\cdot\nabla)^k\ft f(\xi_0).
\end{equation}
This is precisely the Taylor series of~$\ft f$ about the point~$\xi_0$ evaluated at~$\xi$; see also exercise~\ref{ex:taylor}.
We have shown that this series converges to~$\ft f(\xi)$ as desired.
\end{proof}

\begin{ex}
We proved above that the Taylor series of~$\ft f$ at any $\xi_0\in\R^n$ converges to~$\ft f$.
What can you deduce about the radius of convergence?
\end{ex}

\begin{ex}
\label{ex:taylor}
Let us compare two different representations of higher dimensional Taylor polynomials.

Suppose $f\in C^\infty(\R^n)$, $m\in\N$, and $v\in\R^n$.
Show that
\begin{equation}
\sum_{k=0}^m\frac1{k!}(v\cdot\nabla)^kf(0)
=
\sum_{\abs{\alpha}\leq m}\frac1{\alpha!} v^\alpha\partial^\alpha f(0).
\end{equation}
Here $\alpha\in\N^n$ is a multi-index.

In the case $n=3$ the notations mean $v^\alpha=v_1^{\alpha_1}v_2^{\alpha_2}v_3^{\alpha_3}$,
$\alpha!=\alpha_1!\alpha_2!\alpha_3!$,
$\partial^\alpha f=\partial_{x_1}^{\alpha_1}\partial_{x_2}^{\alpha_2}\partial_{x_3}^{\alpha_3}f$,
and
$\abs{\alpha}=\alpha_1+\alpha_2+\alpha_3$.
The case of general~$n$ can be inferred from these, and in the case $n=1$ the notations and the Taylor polynomial should look familiar.

(If~$f$ is real analytic and~$v$ is within the radius of convergence at the origin, both sides equal~$f(v)$ in the limit $m\to\infty$. It is worth noting that the Taylor series can be formally written as $f(v)=(e^{v\cdot\nabla}f)(0)$.)
\end{ex}

For the fun of it, let us see a couple of examples of real analytic functions.

\begin{ex}
Calculate the Fourier transform of the characteristic function of the cube $[-1,1]^3\subset\R^3$.
Make sure the function is defined everywhere.
\end{ex}

\begin{ex}
Calculate the Fourier transform of the characteristic function of the unit ball $B\subset\R^3$.
Make sure the function is defined everywhere.
\end{ex}

\qa

\section{The normal operator}
\label{sec:normal}

In this and the next section we will give our fifth injectivity proof based on the normal operator of~$\xrt$.

\subsection{Why care about a normal operator}

\begin{definition}
\label{def:normal}
The normal operator of a bounded linear operator $A\colon E\to F$ between complex or real Hilbert spaces is $A^*A\colon E\to E$, where $A^*\colon F\to E$ is the adjoint of~$A$.
\end{definition}

Let us discuss this definition and the concepts appearing in it in more detail.
We will work over~$\C$, but there is no significant difference to the real version.
The adjoint is defined to be the operator that satisfies
\begin{equation}
\ip{y}{Ax}_F
=
\ip{A^*y}{x}_E
\end{equation}
for all $x\in E$ and $y\in F$.

\begin{ex}
Using this definition, show that the adjoint~$A^*$ is unique if it exists.
\end{ex}

\begin{ex}
Using the definition, show carefully that $(A^*)^*=A$.
\end{ex}

Existence of the adjoint follows from the Riesz representation theorem which characterizes the dual of a Hilbert space.
Namely, for any~$y$, the mapping $x\mapsto \ip{y}{Ax}_F$ is in~$E^*$, and by the representation theorem there is $z\in E$ so that $\ip{y}{Ax}_F=\ip{z}{x}_E$.
It is easy to check that this~$z$ has to depend linearly on~$y$.
This gives rise to a linear operator~$A^*$ which maps~$y$ to~$z$.
It also turns out that~$A^*$ is bounded if~$A$ is.

\begin{ex}
Show that $\aabs{A^*}=\aabs{A}$ in the operator norm.
\end{ex}

It is very convenient to work with self-adjoint operators.
An operator~$A$ is called self-adjoint if~$A^*=A$.
For the operator~$A$ to be self-adjoint, we must have $E=F$, but this is not always the case.
Therefore it is convenient to replace~$A$ with its normal operator.
Self-adjointness in itself is convenient, but the normal operator tends to be nicer than the original operator in other ways as well.

\begin{ex}
Using the definition of an adjoint given above, show that the normal operator of any bounded linear operator between Hilbert spaces is self-adjoint.
\end{ex}

In our case~$A$ is the X-ray transform.
Now~$E$ is a function space over~$\R^n$ and~$F$ is a function space over the set of all lines in~$\R^n$.
There is no natural way to identify the two spaces, so we will study the normal operator of the X-ray transform.
(We did use such an identification in the plane, but it depended on the choice of origin.)
Our goal is then to show that~$\xrt^*\xrt$ is injective, from which it follows that~$\xrt$ is injective; see exercise~\ref{ex:left-inv}.

\begin{ex}
The injectivity of the X-ray transform is related to the surjectivity of the adjoint.
Let us explore this in a simplified setting.
Let $A\colon H\to H$ be a continuous linear operator in a Hilbert space~$H$.
Show that if~$A^*$ is surjective, then~$A$ is injective.
\end{ex}

\subsection{Measures on spheres and sets of lines}
\label{sec:measures}

The sphere~$\Sphere^{n-1}$ has a canonical measure.
We will not define it, but we will give some descriptions, some of which count as definitions for the reader with suitable knowledge of measure theory or differential geometry.
We will give a similar treatment to the set of lines soon.

In fact, a regular Borel measure is uniquely determined by the integrals of functions in~$C_c$, so our descriptions do secretly constitute a definition of a measure.

The sphere inherits a metric from~$\R^n$.
The metric allows us to define Hausdorff measures of any dimension, and the natural one has dimension $n-1$.

The sphere~$\Sphere^{n-1}$ is also a smooth manifold of dimension $n-1$.
It inherits a Riemannian metric from~$\R^n$, and the Riemannian metric induces a Riemannian volume form.
This leads to the same measure as the Hausdorff approach.
The Riemannian metric gives rise to a metric (distance along great circles in this case), and this gives rise to the same Hausdorff measure as the Euclidean (chordal) metric.

We will denote the measure on the sphere by~$S$.
The most important property for us is a spherical Fubini's theorem.
For $f\in C_c(\R^n)$, we have
\begin{equation}
\int_{\R^n}f(x)\dd x
=
\int_0^\infty\int_{\Sphere^{n-1}}f(r\omega)r^{n-1}\dd S(\omega)\dd r.
\end{equation}
This property could also be used as a definition of the measure~$S$ on~$\Sphere^{n-1}$.

Let us denote the set of all straight lines in~$\R^n$ by~$\Gamma$.
The lines themselves are easy to visualize, but the set of lines is a somewhat less intuitive geometrical object.
There is a natural structure of a Riemannian manifold on~$\Gamma$
,
and that gives rise to other structures as well: topology, metric, measure, smooth structure\dots
The manifold structure is a bit tricky and unnecessary for us, but we will need to understand the structure and measure of~$\Gamma$.
See figure~\ref{fig:line-bundle-fiber} and the surrounding discussion for how the space of oriented lines in~$\R^n$ is naturally identified with~$T\Sphere^{n-1}$, the tangent bundle of the unit sphere.

We will write lines as $x+v\R=\{x+vt;t\in\R\}\subset\R^n$ for $x\in\R^n$ and $v\in \Sphere^{n-1}$.
This parametrization is redundant --- each line is counted several times --- but in the set~$\Gamma$ every line is only included once.

\begin{ex}
Let $x_1,x_2\in\R^n$ and $v_1,v_2\in \Sphere^{n-1}$.
When is $x_1+v_1\R=x_2+v_2\R$?
\end{ex}

We will give some more details on the structure of~$\Gamma$ later in connection with sphere bundles.
For now we rely on intuition and acknowledge that the space~$C_c(\Gamma)$ of continuous and compactly supported functions $\Gamma\to\C$ is not rigorously defined.
We point out that although the lines themselves are not compact, there are non-trivial compact sets in the space of lines.

Let us then describe the measure~$\mu$ on~$\Gamma$.
For any $v\in \Sphere^{n-1}$, we denote\footnote{Previously~$v^\perp$ meant a rotated version of~$v$, but now it stands for the subspace orthogonal to~$v$. See exercise~\ref{ex:2v-perps}.} by $v^\perp\coloneqq\{x\in\R^n;x\cdot v=0\}$ the orthogonal complement of the space spanned by~$v$.
The space~$v^\perp$ can be identified with~$\R^{n-1}$, and we denote the measure there by~$\h^{n-1}$ (the $(n-1)$-dimensional Hausdorff measure).
The measure~$\mu$ is defined so that the integral of $g\in C_c(\Gamma)$ is
\begin{equation}
\int_\Gamma g(\gamma)\dd\mu(\gamma)
=
\int_{\Sphere^{n-1}}\int_{v^\perp}g(x+v\R)\dd\h^{n-1}(x)\dd S(v).
\end{equation}
We are here representing a line by a direction $v\in\Sphere^{n-1}$ and a point $x\in v^\perp$. 
In this representation the same line appears twice in the integral --- in both orientations.
The same formula can be used for the space of oriented lines as well.
The double counting could be removed by replacing~$\Sphere^{n-1}$ with its antipodal quotient (the real projective space of dimension $n-1$), but multiple counting of finite order is not an issue for our purposes.
More precisely, one can define an equivalence relation~$\sim$ on~$\Sphere^{n-1}$ identifying antipodal points; the antipodal quotient is $\Sphere^n/{\sim}$.

\begin{ex}
\label{ex:2v-perps}
In section~\ref{sec:2D-SM} we used~$v^\perp$ to denote something else.
How are these two~$v^\perp$s related in the two-dimensional setting?
\end{ex}

\begin{ex}
\label{ex:translation-invariance-mu}
For a vector $a\in\R^n$, define the translation operator $\phi_a\colon\Gamma\to\Gamma$ by $\phi_a(\gamma)=a+\gamma$.
Show that for $g\in C_c(\Gamma)$ we have $\int_\Gamma g\dd\mu=\int_\Gamma g\circ\phi_a\dd\mu$ for any~$a$.
\end{ex}

\begin{ex}
Exercise~\ref{ex:translation-invariance-mu} shows that the measure~$\mu$ is translation invariant.
It is also rotation invariant.
What does this property mean?
Write the statement in terms of the integral of an arbitrary function like above.
Then prove the statement.
\end{ex}

In the definition above we chose to take base points of lines in direction~$v$ in the hyperplane~$v^\perp$.
If the hyperplanes were chosen differently, the measure would still be translation invariant, but rotation invariance requires a good choice.
If we were to use a single fixed hyperplane, it would fail to parametrize all the required lines when~$v$ is contained in it.
However, this is a zero measure error.
If the fixed hyperplane is given as~$w^\perp$ for some fixed~$w$, one would need to multiply the Hausdorff measure with $\abs{w\cdot v}$.
Using~$v^\perp$ induces the inconvenience of changing the space of integration depending on~$v$, but the geometrical picture is far clearer.

\subsection{The formal adjoint of the X-ray transform}

Now we are ready to find the formal adjoint of the X-ray transform.
The adjoint~$\xrt^*$ is a convenient operator turning (by composition) the X-ray transform into an operator from a function space to itself.
To find a convenient operator, it suffices to find the formal normal operator.
Applying~$\xrt^*$ to the data~$\xrt f$ can be seen as post-processing the data.

Our Hilbert spaces are~$L^2(\R^n)$ and~$L^2(\Gamma)$.
In the definition of the adjoint, we will not use all~$L^2$ functions --- in fact, it is not important whether the X-ray transform is continuous or well defined $L^2(\R^n)\to L^2(\Gamma)$.
Instead, we will only use functions in~$C_c(\R^n)$ and~$C_c(\Gamma)$ to find the adjoint.
The whole point is to find an operator that ends up behaving nicely, and it does not matter how fishy the method to find the operator is.

We want to use the~$L^2$ inner product, but the adjoint as we defined it does not make sense since the X-ray transform is not continuous $L^2(\R^n)\to L^2(\Gamma)$.
This is why we need a formal adjoint, found by using only the nicer subset~$C_c$ of~$L^2$ over both~$\R^n$ and~$\Gamma$.

\begin{ex}
Let $p\in[1,\infty)$.
Let~$f$ be the characteristic function of the ball $B(0,R)\subset\R^n$.
Show that $\aabs{f}_{L^p(\R^n)}=aR^{n/p}$ and $\aabs{\xrt f}_{L^p(\Gamma)}=bR^{(n+p-1)/p}$ for some constants~$a$ and~$b$ depending on the exponent~$p$ and the dimension~$n$.

Therefore the X-ray transform is not continuous $L^p(\R^n)\to L^p(\Gamma)$ for any $p\in(1,\infty)$.
(It is quite easy to see that the X-ray transform is also discontinuous for $p=\infty$ but is continuous for $p=1$.)
\end{ex}

Recall that in section~\ref{sec:torus} we realized the X-ray transform as a family of self-adjoint operators.
Over the Euclidean space the X-ray transform is not self-adjoint but it is a single operator.
It seems to be impossible to realize the X-ray transform as a single self-adjoint operator in a natural way.

Let us finally turn to finding an explicit formula for the adjoint of the X-ray transform.
Let $f\in C_c(\R^n)$ and $g\in C_c(\Gamma)$.
Then
\begin{equation}
\label{eq:vv7}
\begin{split}
\ip{f}{\xrt^*g}_{L^2(\R^n)}
&=
\ip{\xrt f}{g}_{L^2(\Gamma)}
\\&=
\int_{\Sphere^{n-1}}\int_{v^\perp}
\overline{\xrt f(x+v\R)}
g(x+v\R)
\dd\h^{n-1}(x)
\dd S(v)
\\&=
\int_{\Sphere^{n-1}}\int_{v^\perp}
\int_\R
\overline{f(x+tv)}
g(x+v\R)
\dd t
\dd\h^{n-1}(x)
\dd S(v)
\\&\stackrel{\text{a}}{=}
\int_{\Sphere^{n-1}}
\int_{v^\perp}
\int_\R
\overline{f(x+tv)}
g(x+tv+v\R)
\dd t
\dd\h^{n-1}(x)
\dd S(v)
\\&\stackrel{\text{b}}{=}
\int_{\Sphere^{n-1}}
\int_{\R^n}
\overline{f(y)}
g(y+v\R)
\dd y
\dd S(v)
\\&=
\int_{\R^n}
\overline{f(y)}
\left(
\int_{\Sphere^{n-1}}
g(y+v\R)
\dd S(v)
\right)
\dd y.
\end{split}
\end{equation}

\begin{ex}
Explain the steps a and b in~\eqref{eq:vv7}.
\end{ex}

Here we used Fubini's theorem on the internal direct sum $v\R\oplus v^\perp=\R^n$, expressing the whole space essentially as a product of two subspaces.

This calculation indicates that the formal adjoint is
\begin{equation}
\xrt^*g(x)
=
\int_{\Sphere^{n-1}}
g(x+v\R)
\dd S(v).
\end{equation}
The formal adjoint of the X-ray transform is also known as the back projection operator.

There is a certain kind of duality between points and lines.
It might be more illuminating to describe the situation in words:
\begin{itemize}
\item For $f\in C_c(\R^n)$ and $\gamma\in\Gamma$, the X-ray transform~$\xrt f(\gamma)$ is the integral of~$f(x)$ over all~$x$ for which $x\in\gamma$.
\item For $g\in C_c(\Gamma)$ and $x\in\R^n$, the back projection $\xrt^* g(x)$ is the integral of~$g(\gamma)$ over all~$\gamma$ for which $x\in\gamma$.
\end{itemize}

\noindent
Now that we have found the adjoint, it remains to find the normal operator.

\begin{ex}
Show that for $f\in C_c(\R^n)$ we have
\begin{equation}
\label{eq:xrt-normal}
\xrt^*\xrt f(x)
=
2\int_{\R^n}f(x+y)\abs{y}^{1-n}\dd y.
\end{equation}
This is the (formal) normal operator that we have been looking for.
\end{ex}

\subsection{Convolutions and Riesz potentials}

Now, we ought to show that the normal operator $\xrt^*\xrt\colon C_c(\R^n)\to C(\R^n)$ defined by~\eqref{eq:xrt-normal} is injective.

\begin{ex}
\label{ex:left-inv}
Consider a function $F\colon X\to Y$ between any two sets.
Show that there is a left inverse $F^{-1}_L\colon Y\to X$ so that $F^{-1}_L\circ F=\id_X$ if and only if~$F$ is injective.
(Similarly, invertibility from the right is equivalent with surjectivity, but we do not need this side. In fact, this equivalence for right inverses is equivalent with the axiom of choice, but for left inverses it is not. One-sided inverse functions are typically not unique.)

Suppose we have a left inverse~$A$ for~$\xrt^*\xrt$.
What is a left inverse of~$\xrt$?
\end{ex}

The convolution of two functions $f,h\colon\R^n\to\C$ is the function $f*h\colon\R^n\to\C$ defined by
\begin{equation}
f*h(x)
=
\int_{\R^n}f(x-y)h(y)\dd y
\end{equation}
whenever this integral makes sense.

\begin{ex}
The normal operator is a convolution: $\xrt^*\xrt f=f*h$.
What is the function~$h$?
\end{ex}

\begin{definition}
For $\alpha\in(0,n)$, the Riesz potential~$I_\alpha$ is an integral operator defined by
\begin{equation}
I_\alpha f
=
f*h_\alpha,
\end{equation}
where
\begin{equation}
h_\alpha(x)
=
c_\alpha^{-1}
\abs{x}^{\alpha-n}
\end{equation}
and~$c_\alpha$ is a constant.
\end{definition}

The Riesz representation theorem and the Riesz potential are named after two different people. They were brothers.

To prove injectivity of the X-ray transform, we will show that the Riesz potentials are injective.

\begin{theorem}
\label{thm:riesz-potential}
The Riesz potential $I_\alpha\colon C_c(\R^n)\to C(\R^n)$ is an injection for every $\alpha\in(0,n)$.
\end{theorem}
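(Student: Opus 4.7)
The plan is to pass to the Fourier transform and exploit the convolution structure of $I_\alpha$. The function $h_\alpha(x)=c_\alpha^{-1}\abs{x}^{\alpha-n}$ is locally integrable near the origin (because $\alpha>0$) and decays at infinity (because $\alpha<n$), so it defines a tempered distribution and we may take its distributional Fourier transform. The convolution theorem, in the form $\widehat{f\ast h_\alpha}(\xi)=\hat f(\xi)\,\widehat{h_\alpha}(\xi)$, should then reduce injectivity of $I_\alpha$ to two facts: nonvanishing of $\widehat{h_\alpha}$ away from the origin and injectivity of the Fourier transform (theorem~\ref{thm:ft}).

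The key computation is to identify $\widehat{h_\alpha}$. Rather than compute an integral by brute force, I would argue as follows: $h_\alpha$ is rotation invariant and, for every $\lambda>0$, satisfies $h_\alpha(\lambda x)=\lambda^{\alpha-n}h_\alpha(x)$. These two properties force the distributional Fourier transform $\widehat{h_\alpha}$ to be rotation invariant and homogeneous of degree $-\alpha$, so $\widehat{h_\alpha}(\xi)=C\abs{\xi}^{-\alpha}$ for some constant $C$. To see $C\neq 0$ one can pair both sides against a Gaussian $e^{-\abs{x}^2/2}$ (whose Fourier transform is another Gaussian) and reduce both integrals to standard Gamma-function expressions; alternatively, one may simply choose the normalizing constant $c_\alpha$ in the definition of $h_\alpha$ so that $C=1$.

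Assume now that $I_\alpha f=0$ for some $f\in C_c(\R^n)$. Since $f\in L^1(\R^n)$ is compactly supported, testing against Schwartz functions shows that $\widehat{f\ast h_\alpha}(\xi)=C\abs{\xi}^{-\alpha}\hat f(\xi)$ as tempered distributions. The assumption forces this product to vanish, and on the open set $\R^n\setminus\{0\}$ the factor $\abs{\xi}^{-\alpha}$ is a smooth nonvanishing function, so the continuous (in fact real analytic by theorem~\ref{thm:pw}) function $\hat f$ vanishes on $\R^n\setminus\{0\}$, and hence everywhere by continuity. Injectivity of the Fourier transform (theorem~\ref{thm:ft}) yields $f=0$.

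The main obstacle is the Fourier-side identification of $\widehat{h_\alpha}$ together with the rigorous use of the convolution theorem, since $h_\alpha$ lies in neither $L^1(\R^n)$ nor $L^2(\R^n)$. The homogeneity and rotation arguments dispose of the \emph{shape} of $\widehat{h_\alpha}$, but the technical care sits in the distributional framework: one must verify that $f\ast h_\alpha$ is a tempered distribution and that its distributional Fourier transform really is the pointwise product $\hat f\cdot\widehat{h_\alpha}$. Both facts are standard once one uses $f\in C_c(\R^n)$, but they are where the actual work lies.
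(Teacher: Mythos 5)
Your proposal is essentially the paper's own first approach: the paper reduces injectivity of $I_\alpha$ to the identity $\ft(I_\alpha f)=\ft f\cdot\ft h_\alpha$ together with $\ft h_\alpha(\xi)=b_\alpha\abs{\xi}^{-\alpha}$, $b_\alpha>0$, and then concludes via injectivity of the Fourier transform. The paper explicitly states that it lacks the distribution theory to make this precise and flags two non-rigorous points; you handle one of them better and leave the other in the same state. Concretely, you replace the paper's ``divide by $\abs{\xi}^{-\alpha}$'' step with the cleaner observation that $\abs{\xi}^{-\alpha}\ft f(\xi)=0$ on $\R^n\setminus\{0\}$ forces the continuous (indeed, by theorem~\ref{thm:pw}, real analytic) function $\ft f$ to vanish off the origin and hence everywhere, and your homogeneity-plus-rotation-invariance derivation of the shape of $\ft h_\alpha$ is a legitimate substitute for a direct computation (one should note that no correction supported at the origin can occur, since such terms are homogeneous of degree $\leq -n<-\alpha$). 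What remains open in your write-up --- that $f*h_\alpha$ is tempered and that the convolution theorem holds in this distributional setting --- is exactly the gap the paper also leaves open, so you are no less rigorous than the source. For completeness: the paper additionally offers a second, fully rigorous route via lemmas~\ref{lma:I1I1=I2} and~\ref{lma:laplace-I2} ($I_1I_1=cI_2$ and $-b\Delta I_2=\id$ on $C^2_c(\R^n)$), which avoids distribution theory entirely but only covers $\alpha\in\{1,2\}$ and $n\geq3$; that special case is all that is needed for the application to the normal operator $\xrt^*\xrt$.
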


The proof of a special case of this injectivity property of Riesz potentials is postponed to the next section.
We present the statement here because of its corollary:

\begin{theorem}
\label{xrtthm:riesz}
The X-ray transform is injective on~$C_c(\R^n)$.
\end{theorem}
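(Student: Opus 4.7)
The plan is to identify the normal operator $\xrt^*\xrt$ with a constant multiple of the Riesz potential $I_1$, invoke the (deferred) Theorem~\ref{thm:riesz-potential} to conclude that $I_1$, and hence $\xrt^*\xrt$, is injective on $C_c(\R^n)$, and finally deduce injectivity of $\xrt$ itself from this.

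First I would rewrite equation~\eqref{eq:xrt-normal} as a convolution. Under the change of variable $y \mapsto -y$, both the Lebesgue measure and the weight $\abs{y}^{1-n}$ are invariant, so
\[
\xrt^*\xrt f(x) = 2\int_{\R^n} f(x+y)\abs{y}^{1-n}\dd y = 2\,(f * k)(x), \qquad k(y) = \abs{y}^{1-n}.
\]
Comparing with $h_\alpha(y) = c_\alpha^{-1}\abs{y}^{\alpha - n}$, the exponents match exactly when $\alpha = 1$, so $\xrt^*\xrt f = 2 c_1 I_1 f$. The standing assumption $n \geq 2$ places $\alpha = 1$ in the admissible range $(0, n)$, so Theorem~\ref{thm:riesz-potential} applies and $I_1 \colon C_c(\R^n) \to C(\R^n)$ is injective. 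It follows that $\xrt^*\xrt$ is injective on $C_c(\R^n)$.

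To finish, suppose $f \in C_c(\R^n)$ satisfies $\xrt f = 0$. Applying $\xrt^*$ gives $\xrt^*\xrt f = 0$, and injectivity of the normal operator forces $f = 0$. Equivalently, in the spirit of Exercise~\ref{ex:left-inv}, any left inverse $G$ of $I_1$ (which exists precisely because $I_1$ is injective) yields a left inverse $\tfrac{1}{2 c_1}\, G \circ \xrt^*$ of $\xrt$ itself.

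I do not anticipate any real obstacle in the present corollary. The genuine analytic work lies entirely in Theorem~\ref{thm:riesz-potential}, whose proof (carried out in the next section) will presumably use the Fourier transform to turn convolution with $\abs{y}^{1-n}$ into multiplication by a nonvanishing symbol, together with the Paley--Wiener theorem of Section~\ref{sec:ft} to leverage the compact support of $f$.
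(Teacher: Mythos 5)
Your proposal is correct and follows exactly the route the paper intends: identify $\xrt^*\xrt f$ with $2c_1 I_1 f$ via the symmetry $y\mapsto -y$ of the kernel $\abs{y}^{1-n}$, invoke Theorem~\ref{thm:riesz-potential} for injectivity of $I_1$ (noting $1\in(0,n)$ since $n\geq2$), and conclude that $\xrt f=0$ implies $\xrt^*\xrt f=0$ and hence $f=0$. The paper leaves this as an exercise but sets it up in precisely this way, including the appeal to Exercise~\ref{ex:left-inv} for the left-inverse formulation.
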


\begin{ex}
Prove theorem~\ref{xrtthm:riesz} using the results and ideas obtained in this section.
\end{ex}

\subsection{Remarks}

The normal operator depends on the choice of the target space and the inner product on it.
If we parametrized lines redundantly with $\R^n\times \Sphere^{n-1}$, the adjoint of the X-ray transform would take $g\in C_c(\R^n\times \Sphere^{n-1})$ into the function
\begin{equation}
\xrt^*g(x)
=
\int_{\Sphere^{n-1}}\int_\R g(x+tv,v)\dd t\dd S(v).
\end{equation}
This is very similar to what we found before, but there is an additional integral over~$\R$.

If we now try to compute the normal operator, we find that
\begin{equation}
\xrt^*\xrt f(x)
=
\int_{\Sphere^{n-1}}\int_\R \int_R f(x+tv+sv)\dd s\dd t\dd S(v).
\end{equation}
This differs from our earlier normal operator by the factor~$\int_\R\dd t$ which is famously infinite.
This is why redundancy in the parametrization of geodesics is problematic.
We had two-fold redundancy, so we ended up with a factor~$2$ in our normal operator.
It is easier to divide by~$2$ than by~$\infty$.

Let us then see what the effect of changing inner products is.
Let $P\in\R^{n\times n}$ and $Q\in\R^{m\times m}$ be symmetric and positive definite.
Equip~$\R^n$ with the inner product
\begin{equation}
\ip{x}{y}_P
=
x^TPy
\end{equation}
and similarly~$\R^m$ with $\ip{\dummy}{\dummy}_Q$.

Let $A\colon\R^n\to\R^m$ be a linear operator (matrix).
Let $B\colon\R^m\to\R^n$ be the adjoint with respect to these inner products.
That is, suppose
\begin{equation}
\ip{x}{Ay}_Q
=
\ip{Bx}{y}_P
\end{equation}
for all $x\in\R^n$ and $y\in\R^m$.

\begin{ex}
Show that $B=P^{-1}A^TQ$.
Therefore the normal operator is $BA=P^{-1}A^TQA$.
\end{ex}

The conclusion is that changing the inner product can introduce an operator between~$\xrt^*$ and~$\xrt$, where~$\xrt^*$ is understood as the $L^2$-adjoint.
This is popular in X-ray tomography.
The corresponding inversion method is known as filtered back projection.
For this and other practical methods, see~\cite{N}.

\qa

\section{Riesz potentials}
\label{sec:riesz}

In this section we will study the injectivity of Riesz potentials.
We lack the prerequisite theory of distributions to give a precise proof of theorem~\ref{thm:riesz-potential}, but we will discuss two approaches to prove injectivity.
The theory of Riesz potentials is left incomplete, but the interested reader should find enough pointers here for further study towards completion.

\subsection{The Fourier approach}
\label{sec:fourier-riesz}

The first approach makes use of the Fourier transform.

The calculations in this section are heuristic.
It is possible to make rigorous sense of them and give a precise proof of theorem~\ref{thm:riesz-potential}, but we will avoid the technicalities.

Consider a function $f\colon\R^n\to\C$ which we want to reconstruct from $I_\alpha f$ for some $\alpha\in(0,n)$.
As in section~\ref{sec:normal}, denote $h_\alpha(x)=c_\alpha\abs{x}^{\alpha-n}$.
Now $I_\alpha f=f*h_\alpha$.
We will take a Fourier transform of this identity.

\begin{ex}
\label{ex:ft-prod-conv}
Show that if $f,g\in C_c(\R^n)$, then
\begin{equation}
\ft(f*g)(\xi)
=
\ft f(\xi)\ft g(\xi)
\end{equation}
for all $\xi\in\R^n$.
\end{ex}

Exercise~\ref{ex:ft-prod-conv} is a simple calculation using definitions.
The function~$h_\alpha$, however, is not in~$C_c(\R^n)$.
It is locally integrable, but not in any~$L^p$ space.
The same property of convolutions and Fourier transforms does hold in more generality, and we have
\begin{equation}
\label{eq:riesz-ft}
\ft(I_\alpha f)
=
\ft f\cdot\ft h_\alpha
\end{equation}
in the sense of distributions.
Both~$h_\alpha$ and~$I_\alpha f$ are distributions.

Then we want to compute the Fourier transform~$\ft h_\alpha$.
Our definition of the Fourier transform is not applicable, but the definition can be extended to distributions.
With such an extended definition one can calculate that
\begin{equation}
\label{eq:ft-h-alpha}
\ft h_\alpha(\xi)
=
b_\alpha\abs{\xi}^{-\alpha}
\end{equation}
for some constant $b_\alpha>0$.

Now if~$I_\alpha f$ vanishes, then by~\eqref{eq:riesz-ft} also~$\ft f\cdot\ft h_\alpha$ vanishes.
That is, $\abs{\xi}^{-\alpha}\ft f(\xi)=0$ for all~$\xi$.
This implies that~$\ft f$ vanishes, and so $f=0$.
This shows injectivity of~$I_\alpha$.
However, a number of steps were far from rigorous, including dividing by~$\abs{\xi}^{-\alpha}$ on the Fourier side.

This approach also gives an inversion formula for the Riesz potential and thus also the X-ray transform:
\begin{equation}
f
=
\ft^{-1}(\mu_\alpha\ft(I_\alpha f))
,
\end{equation}
where $\mu_\alpha(\xi)=b_\alpha^{-1}\abs{\xi}^\alpha$.

\subsection{The Laplace approach}

The second approach makes use of the Laplace operator.

We found in section~\ref{sec:normal} that the normal operator~$\xrt^*\xrt$ is, up to a multiplicative constant, the Riesz potential~$I_1$.
To show that~$I_1$ is injective, we show that $I_1\circ I_1$ is injective.
To make the argument rigorous, we assume $n\geq3$ and we will also assume more regularity in a moment.
But first, let us see what the operator~$\xrt^*\xrt\xrt^*\xrt$ or~$I_1I_1$ does.

\begin{lemma}
\label{lma:I1I1=I2}
If $f\in C_c(\R^n)$, $n\geq3$, there is a constant $c>0$ so that $I_1I_1f=cI_2f$.
\end{lemma}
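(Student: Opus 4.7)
The plan is to use associativity of convolution and the homogeneity/rotational symmetry of the kernel $h_1$ to reduce the claim to the identity $h_1 * h_1 = c\, h_2$ for some positive constant.

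First I would write $I_1 f = f * h_1$ with $h_1(x) = c_1^{-1}|x|^{1-n}$ and, pretending associativity for a moment, compute
\begin{equation}
I_1 I_1 f = (f * h_1)*h_1 = f * (h_1 * h_1).
\end{equation}
So the task reduces to evaluating the convolution $h_1 * h_1$. Justifying associativity requires knowing that the iterated integral
\begin{equation}
(h_1 * h_1)(x) = c_1^{-2}\int_{\R^n} |x-y|^{1-n}|y|^{1-n}\dd y
\end{equation}
converges and that Fubini applies when it is convolved against $f \in C_c(\R^n)$. Here is where the assumption $n\geq 3$ enters: near $y=0$ and $y=x$ the integrand has the local behavior $|y|^{1-n}$ (respectively $|x-y|^{1-n}$), which is locally integrable in dimension $n$; at infinity the integrand decays like $|y|^{2-2n}$, which is integrable at infinity precisely when $n > 2$. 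Under these bounds one can split the $y$-integral into three regions (a neighborhood of $0$, a neighborhood of $x$, and the complement) and estimate each piece to get a locally bounded majorant, which then legitimates the use of Fubini for $f * (h_1 * h_1) = (f*h_1)*h_1$.

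The main step is computing $h_1 * h_1$. I would not grind out the integral; instead I would exploit two symmetries. Rotational invariance of Lebesgue measure gives, for any rotation $R$,
\begin{equation}
(h_1 * h_1)(Rx) = (h_1 * h_1)(x),
\end{equation}
so $h_1 * h_1$ is radial. Next, the scaling $y \mapsto \lambda y$ (for $\lambda > 0$) shows that $h_1$ is positively homogeneous of degree $1-n$, and a direct change of variables in the convolution integral gives
\begin{equation}
(h_1 * h_1)(\lambda x) = \lambda^{2-n}(h_1 * h_1)(x).
\end{equation}
A radial function on $\R^n\setminus\{0\}$ that is positively homogeneous of degree $2-n$ must be a constant multiple of $|x|^{2-n}$, so $h_1 * h_1(x) = C|x|^{2-n}$ for some constant $C$, which is positive since the integrand defining $(h_1*h_1)(x)$ is positive. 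Recognizing $|x|^{2-n}$ as a positive multiple of $h_2$, this gives $h_1 * h_1 = c\, h_2$ with $c > 0$, and substituting back yields
\begin{equation}
I_1 I_1 f = f * (h_1 * h_1) = c\,(f * h_2) = c\, I_2 f.
\end{equation}

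The main obstacle is really bookkeeping: verifying that $h_1 * h_1$ is a well-defined locally integrable function when $n\geq 3$ (this is where the dimension restriction is used) and that the pointwise symmetry argument for a homogeneous radial function goes through without pathology at the origin. The actual value of the constant $c$ is irrelevant for injectivity; what matters is that it is nonzero, which follows from positivity of the kernels.
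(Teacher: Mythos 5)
Your proof is correct and follows essentially the same route as the paper: both reduce the claim to evaluating the inner integral $\int_{\R^n}\abs{w-y}^{1-n}\abs{y}^{1-n}\dd y$ (your $h_1*h_1$) and identify it as a constant multiple of $\abs{w}^{2-n}$ using rotation invariance plus the scaling/homogeneity argument. Your added attention to where $n\geq3$ guarantees convergence is welcome but corresponds exactly to what the paper leaves as the exercise of showing $\phi(1)$ is finite and positive.
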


In general, the Riesz potentials satisfy $I_\alpha I_\beta=I_{\alpha+\beta}$, but we will not try to prove this in full generality.

\begin{proof}[Proof of lemma~\ref{lma:I1I1=I2}]
First, a simple calculation gives
\begin{equation}
\begin{split}
c_1^2I_1I_1f(x)
&=
c_1
\int_{\R^n}
I_1f(x-y)\abs{y}^{1-n}
\dd y
\\&=
\int_{\R^n}
\left(
\int_{\R^n}
f(x-y-z)
\abs{z}^{1-n}
\dd z
\right)
\abs{y}^{1-n}
\dd y
\\&=
\int_{\R^n}
\left(
\int_{\R^n}
f(x-w)
\abs{w-y}^{1-n}
\dd w
\right)
\abs{y}^{1-n}
\dd y
\\&=
\int_{\R^n}
f(x-w)
\left(
\int_{\R^n}
\abs{w-y}^{1-n}
\abs{y}^{1-n}
\dd y
\right)
\dd w.
\end{split}
\end{equation}
By rotation invariance\footnote{Denote the integral by $\Phi(w)$. Pick any rotation $R\in O(n)$ and change the varialbe of integration in $\Phi(Rw)$ from $y$ to $Ry$ to see that $\Phi(Rw)=\Phi(w)$. Therefore $\Phi$ only depends on the norm of its argument.}, the inner integral is
\begin{equation}
\int_{\R^n}
\abs{w-y}^{1-n}
\abs{y}^{1-n}
\dd y
=
\phi(\abs{w})
\end{equation}
for some function~$\phi$.
When $r>0$, a simple scaling argument (exercise~\ref{ex:scaling-argument}) shows that $\phi(r)=r^{2-n}\phi(1)$.
Therefore
\begin{equation}
c_1^2I_1I_1f(x)
=
\phi(1)
\int_{\R^n}
f(x-w)
\abs{w}^{2-n}
\dd w
=
\phi(1)c_2I_2f(x)
.
\end{equation}
This is the desired conclusion.
\end{proof}

\begin{ex}
Explain why~$\phi(1)$ is a finite positive number.
\end{ex}

\begin{ex}
\label{ex:scaling-argument}
Make the simple scaling argument.
\end{ex}

Now we will turn to inverting~$I_2$.
The inverse operator is simply --- and perhaps surprisingly --- the Laplacian.
In fact, we will show that $-b\Delta I_2f=f$ for a suitable constant $b>0$.
For technical convenience, we assume $f\in C^2_c(\R^n)$ and $n\geq3$.

We have
\begin{equation}
I_2f(x)
=
c_2^{-1}
\int_{\R^n}
f(x-y)
\abs{y}^{2-n}
\dd y.
\end{equation}
Since $f\in C^2_c(\R^n)$ and $y\mapsto\abs{y}^{2-n}$ is locally integrable, 
lemma~\ref{lma:d-int} gives
\begin{equation}
c_2
\Delta I_2f(x)
=
\int_{\R^n}
(\Delta f)(x-y)
\abs{y}^{2-n}
\dd y.
\end{equation}
We split the integral in two parts, integrating separately near the singularity at $y=0$ and far from it.
For any $\eps>0$ (which will be let go to zero later) we have
\begin{equation}
\begin{split}
c_2
\Delta I_2f(x)
&=
\underbrace{
\int_{B(0,\eps)}
(\Delta f)(x-y)
\abs{y}^{2-n}
\dd y
}_{\eqqcolon P(x,\eps)}
\\&\qquad+
\underbrace{
\int_{\R^n\setminus B(0,\eps)}
(\Delta f)(x-y)
\abs{y}^{2-n}
\dd y
}_{\eqqcolon Q(x,\eps)}.
\end{split}
\end{equation}
By a direct computation
\begin{equation}
\abs{P(x,\eps)}
\leq
\max\abs{\Delta f}
\cdot
\int_{B(0,\eps)}
\abs{y}^{2-n}
\dd y
\to
0
\end{equation}
as $\eps\to0$.

\begin{ex}
Verify this limit, either by direct calculation in spherical coordinates or by appealing to local integrability of $y\mapsto\abs{y}^{2-n}$ and absolute continuity of the Lebesgue integral.
\end{ex}

The second integral contains no singularities, and we may integrate by parts.
Let us first recall a more general result:

\begin{ex}
\label{ex:green-laplace}
Let $\Omega\subset\R^n$ be an open set with smooth boundary and denote the exterior unit normal vector by~$\nu$.
Denote the surface measure on~$\partial\Omega$ by~$S$.
Suppose $u\in C^2(\R^n)$ and $v\in C^2_c(\R^n)$.
Show that
\begin{equation}
\begin{split}
\int_\Omega u(x)\Delta v(x) \dd x
&=
\int_\Omega v(x)\Delta u(x) \dd x
\\&\qquad+
\int_{\partial\Omega}
\left[
u(x)\nabla v(x)-v(x)\nabla u(x)
\right]
\cdot\nu(x)\dd S(x).
\end{split}
\end{equation}
Find or recall suitable integration by parts formulas.
\end{ex}

We will use exercise~\ref{ex:green-laplace} in our specific case.
Note that when $\Omega=\R^n\setminus\bar B(0,\eps)$, we may freely change the values of our functions near the origin to make them smooth.
We find
\begin{equation}
\begin{split}
Q(x,\eps)
&=
\int_{\R^n\setminus B(0,\eps)}
(\Delta_x f)(x-y)
\abs{y}^{2-n}
\dd y
\\&=
\int_{\R^n\setminus B(0,\eps)}
(\Delta_y f)(x-y)
\abs{y}^{2-n}
\dd y
\\&=
\int_{\R^n\setminus B(0,\eps)}
f(x-y)
\Delta_y\abs{y}^{2-n}
\dd y
\\&\quad+
\int_{\partial B(0,\eps)}\abs{y}^{2-n}\nabla f(x-y)\cdot y\abs{y}^{-1}\dd S(y)
\\&\quad+
\int_{\partial B(0,\eps)}f(x-y)\nabla\abs{y}^{2-n}\cdot y\abs{y}^{-1}\dd S(y)
.
\end{split}
\end{equation}
The exterior unit normal vector at $x\in\partial(\R^n\setminus\bar B(0,\eps))$ is $-x/\abs{x}$ and the gradient of $f(x-y)$ with respect to~$y$ is $-\nabla f$ evaluated at $x-y$.
This makes all the signs as they are.

This integral can be simplified significantly.

\begin{ex}
Show that $\nabla_y\abs{y}^{2-n}=(2-n)\abs{y}^{-n}y$.
\end{ex}

\begin{ex}
Show that $\Delta_y\abs{y}^{2-n}=0$ when $y\neq0$.
\end{ex}

\begin{ex}
Show that
\begin{equation}
\int_{\partial B(0,\eps)}\abs{y}^{2-n}\dd S(y)
=
a\eps
\end{equation}
for some constant $a>0$ depending on dimension.
Therefore the corresponding term vanishes as $\eps\to0$.
\end{ex}

The only term of~$Q(x,\eps)$ that does not vanish as $\eps\to0$ is
\begin{equation}
\begin{split}
&
\int_{\partial B(0,\eps)}f(x-y)\nabla\abs{y}^{2-n}\cdot y\abs{y}^{-1}\dd S(y)
\\&=
\int_{\partial B(0,\eps)}f(x-y)(2-n)\abs{y}^{1-n}\dd S(y)
\\&=
\int_{\partial B(0,\eps)}f(x-y)(2-n)\eps^{1-n}\dd S(y)
\\&=
\int_{\partial B(0,1)}f(x-\eps z)(2-n)\dd S(z).
\end{split}
\end{equation}
The last change of variables was made for the purpose of making the domain integrated over independent of the parameter~$\eps$.
As $\eps\to0$, we have $f(x-\eps z)\to f(x)$ uniformly for $z\in\bar B(0,1)$.

\begin{ex}
Collect the observations we have made and show that
\begin{equation}
c_2
\Delta I_2f(x)
=
(2-n)af(x).
\end{equation}
Therefore with a suitable choice of $b>0$ we have $-b\Delta I_2f=f$.
\end{ex}


We have proven a lemma for the inversion of~$I_2$:

\begin{lemma}
\label{lma:laplace-I2}
Let $n\geq3$.
There is a constant $b>0$ depending on~$n$ so that
\begin{equation}
-b\Delta I_2f=f
\end{equation}
for all $f\in C^2_c(\R^n)$.
\end{lemma}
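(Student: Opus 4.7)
The plan is to assemble the chain of observations already set up via exercises into a clean proof. Starting from $c_2 I_2 f(x) = \int_{\R^n} f(x-y)\abs{y}^{2-n}\dd y$, I would first apply lemma~\ref{lma:d-int} to commute $\Delta$ past the integral sign. The domination hypothesis is met because $y\mapsto\abs{y}^{2-n}$ is locally integrable on $\R^n$ for $n\geq 3$, and $\Delta f$ is bounded with compact support, so differentiating under the integral in each coordinate direction is justified. This gives $c_2 \Delta I_2 f(x) = \int_{\R^n}(\Delta f)(x-y)\abs{y}^{2-n}\dd y$.

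Next I would split the integration region into $B(0,\eps)$ and its complement, writing the total as $P(x,\eps)+Q(x,\eps)$ as in the text. The bound $\abs{P(x,\eps)} \leq \max\abs{\Delta f}\cdot\int_{B(0,\eps)}\abs{y}^{2-n}\dd y$ vanishes as $\eps\to 0$ by absolute continuity of the Lebesgue integral (or a direct polar computation giving a rate $O(\eps^2)$). Then, since $\Delta f(x-y)=\Delta_y f(x-y)$ and the integrand of $Q$ is now smooth on the open set $\R^n\setminus\bar B(0,\eps)$, I would apply Green's second identity on this domain (using compact support of $f$ to discard the boundary contribution at infinity), moving $\Delta_y$ from $f(x-y)$ onto $\abs{y}^{2-n}$.

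The bulk volume term vanishes because $\abs{y}^{2-n}$ is (up to a constant) the fundamental solution of the Laplacian in dimension $n\geq 3$, so $\Delta_y\abs{y}^{2-n}=0$ on $\R^n\setminus\{0\}$. Two boundary terms on $\partial B(0,\eps)$ remain. The first, involving $\abs{y}^{2-n}\nabla f(x-y)\cdot\nu$, is $O(\eps^{2-n}\cdot\eps^{n-1})=O(\eps)$ and drops out in the limit. The second, involving $f(x-y)\nabla\abs{y}^{2-n}\cdot\nu$, evaluates via $\nabla_y\abs{y}^{2-n}=(2-n)\abs{y}^{-n}y$ and the outward normal $\nu=-y/\abs{y}$ to $\R^n\setminus\bar B(0,\eps)$ to $(2-n)\eps^{1-n}\int_{\partial B(0,\eps)}f(x-y)\dd S(y)$. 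Rescaling the sphere to unit radius and using uniform continuity of $f$ to pass $f(x-\eps z)\to f(x)$ gives the limit $(2-n) a\, f(x)$, where $a=\int_{\partial B(0,1)}\dd S$.

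Assembling these pieces yields $c_2 \Delta I_2 f(x) = (2-n) a\, f(x)$. Since $n\geq 3$, the factor $(2-n)a$ is a strictly negative constant depending only on $n$, so choosing $b = c_2/((n-2)a)>0$ produces $-b\Delta I_2 f = f$. The main technical care needed is the justification of each limit and the validity of the integration by parts, both of which have been isolated into exercises preceding the statement; the conceptual content is simply that $\abs{y}^{2-n}$ is a constant multiple of the Newtonian potential kernel, and $I_2$ is (up to normalization) the Newtonian potential.
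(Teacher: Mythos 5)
Your proposal is correct and follows essentially the same route as the paper: differentiating under the integral via lemma~\ref{lma:d-int}, splitting into the near and far pieces $P(x,\eps)$ and $Q(x,\eps)$, applying Green's second identity on $\R^n\setminus\bar B(0,\eps)$, using harmonicity of $\abs{y}^{2-n}$ away from the origin, and extracting $f(x)$ from the surviving boundary term. The identification of the constant $(2-n)a$ and the choice $b=c_2/((n-2)a)$ match the paper's computation.
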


This allows us to prove some injectivity results for Riesz potentials.

\begin{theorem}
If $n\geq3$, the Riesz potentials~$I_1$ and~$I_2$ are injective on the space~$C^2_c(\R^n)$.
\end{theorem}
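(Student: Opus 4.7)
The plan is to deduce both injectivity claims directly from the two lemmas already proved in this section: Lemma~\ref{lma:I1I1=I2} (composition identity $I_1 I_1 f = c I_2 f$) and Lemma~\ref{lma:laplace-I2} (inversion identity $-b\Delta I_2 f = f$ on $C^2_c(\R^n)$). The case of $I_2$ is essentially immediate, and the case of $I_1$ will be reduced to the $I_2$ case by applying $I_1$ twice.

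First I would handle $I_2$. Suppose $f\in C^2_c(\R^n)$ satisfies $I_2 f=0$. Since Lemma~\ref{lma:laplace-I2} gives a pointwise identity $-b\Delta I_2 f=f$ valid for every $f\in C^2_c(\R^n)$, we may apply $-b\Delta$ to the identically zero function $I_2 f$ and conclude $f = -b\Delta(0) = 0$. This settles injectivity of $I_2$ on $C^2_c(\R^n)$.

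Next I would handle $I_1$. Suppose $f\in C^2_c(\R^n)$ with $I_1 f=0$. Since $C^2_c(\R^n)\subset C_c(\R^n)$, Lemma~\ref{lma:I1I1=I2} applies and yields $I_1 I_1 f = c I_2 f$ for some constant $c>0$. On the left-hand side, the outer $I_1$ is applied to the zero function (because $I_1 f$ vanishes identically), so $I_1 I_1 f$ vanishes everywhere; hence $c I_2 f = 0$, and $c>0$ gives $I_2 f=0$. We are then back in the previously treated case, so $f=0$ by the $I_2$ argument above.

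The step I would worry about is the legitimacy of writing $I_1(I_1 f) = 0$ in the proof of the $I_1$ case: this uses that the pointwise value $I_1 I_1 f(x)$, computed as in the proof of Lemma~\ref{lma:I1I1=I2} via a double integral, agrees with the iterated composition when the inner function $I_1 f$ is identically zero. This is harmless here because once we know $I_1 f\equiv 0$ pointwise, the inner integral inside the double integral vanishes identically before Fubini is invoked, so both interpretations give zero. Everything else is a direct invocation of the two lemmas, and no new computations are required.
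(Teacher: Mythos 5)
Your proposal is correct and is exactly the argument the paper intends: the theorem is left as an exercise to be proved from Lemma~\ref{lma:I1I1=I2} and Lemma~\ref{lma:laplace-I2}, and you use precisely those two lemmas in the expected way (invert $I_2$ via $-b\Delta$, then reduce $I_1$ to $I_2$ by composing). Your closing remark about the legitimacy of $I_1(I_1f)=0$ is a sensible extra check and causes no issue.
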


For the purposes of memorising properties such as $I_\alpha I_\beta=I_{\alpha+\beta}$ and $-b\Delta I_2f=f$ it is useful to think of the Riesz potential a Fourier multiplier.
Every convolution operator is a Fourier multiplier due to the relationship between convolutions and Fourier transforms (exercise~\ref{ex:ft-prod-conv} gives a special case).
In the case of Riesz potentials the multiplier on the Fourier side is given in equation~\eqref{eq:ft-h-alpha}.
Multiplying by~$\abs{\xi}^\beta$ and then by~$\abs{\xi}^\alpha$ clearly amounts to multiplying simply by $\abs{\xi}^{\alpha+\beta}$.
The Laplace operator $-\Delta$ can also be seen as a Fourier multiplier~$\abs{\xi}^2$ (cf. exercise~\ref{ex:ft-der-poly}).
(Any constant coefficient differential operator corresponds to multiplication by a polynomial on the Fourier side.)
Therefore the powers cancel out in lemma~\ref{lma:laplace-I2}.
The reason we did not build all of this in section~\ref{sec:fourier-riesz} was that a more advanced approach (such as distribution theory) is needed to make sense of all this.
The resulting simplicity of things when seen from this point of view can hopefully serve as an incentive to get acquainted with the theory of distributions and their Fourier transforms.

\begin{ex}
Prove the theorem using lemmas~\ref{lma:I1I1=I2} and~\ref{lma:laplace-I2}.
\end{ex}

\qa

\section{Partial data}
\label{sec:partial}

In this section we will give our sixth injectivity proof based on Fourier analysis on~$\R^n$.
This method will also give a partial data result.

Some of the discussions in this section are vague as developing further would require substantial theory building.

\subsection{Various kinds of limitations}

In real life measurement situations there are various kinds of limitations to the measurements.
Sometimes one can fire X-rays through an object in any position and direction, but not always.
One might need to avoid hitting something sensitive, or the geometry of the measurement situation restricts the available directions.
In general, one might ask how large a set of lines is needed so that a function is uniquely determined by its integrals over them.

We have mostly studied X-ray tomography with full data so far.
We had one result with partial data so far, namely Helgason's support theorem (theorem~\ref{thm:helgason}), which concerns tomography around a convex obstacle.
In this section we will study a particular partial data scenario, where the set of directions of X-rays is not the whole sphere.
This is called limited angle tomography.

\subsection{Full data with the Fourier transform}

Before embarking on the study of partial data, let us first solve the simpler full data problem with these tools.
Consider a function $f\in C_c(\R^n)$ and define the X-ray transform as
\begin{equation}
\xrt f(x,v)
=
\int_\R f(x+tv)\dd t.
\end{equation}
We could also have used the notation~$\xrt_vf(x)$ as in section~\ref{sec:torus}.
We will restrict the parameters $(x,v)\in\R^n\times \Sphere^{n-1}$ so that $x\cdot v=0$.
In other words, $x\in v^\perp$, where~$v^\perp$ denotes the subspace orthogonal to~$v$.
Since $\xrt f(x+sv,v)=\xrt f(x,v)$ for any $s\in\R$, this restriction does not reduce our data.
In other words, $\xrt f(x,v)$ for all parameters $(x,v)\in\R^n\times \Sphere^{n-1}$ is uniquely determined by the restriction to $v\in \Sphere^{n-1}$ and $x\in v^\perp$.

Fix any $v\in \Sphere^{n-1}$ and consider the function $\xrt f(\dummy,v)$ on the $(n-1)$-dimensional space $v^\perp\subset\R^n$.
We can calculate the Fourier transform of $\xrt f(\dummy,v)$ on this space~$v^\perp$.
This function is continuous and compactly supported: $\xrt f(\dummy,v)\in C_c(v^\perp)$.
For $\xi\in v^\perp$, we denote
\begin{equation}
(\ft_{v^\perp}\xrt f(\dummy,v))(\xi)
=
\int_{v^\perp} e^{-i\xi\cdot x} \xrt f(x,v) \dd\h^{n-1}(x).
\end{equation}

\begin{ex}
\label{ex:og-fourier}
Fix any $v\in \Sphere^{n-1}$.
Suppose $\xi\in\R^n$ is orthogonal to~$v$.
Show that
\begin{equation}
(\ft_{v^\perp}\xrt f(\dummy,v))(\xi)
=
\ft f(\xi).
\end{equation}
This result is known as the Fourier slice theorem.
We saw a torus version of this in lemma~\ref{lma:ft-xrt-torus}.
\end{ex}

\begin{theorem}
\label{xrtthm:fourier}
If $f\in C_c(\R^n)$ integrates to zero over all lines, then $f=0$.
\end{theorem}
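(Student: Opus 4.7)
The plan is to combine the Fourier slice theorem of exercise~\ref{ex:og-fourier} with the injectivity of the Fourier transform (theorem~\ref{thm:ft}) to conclude that $\ft f$ vanishes identically, hence $f=0$. The overall strategy mirrors the general philosophy stated earlier in the torus case: show that $\xrt f=0 \implies \ft f=0 \implies f=0$. In the Euclidean setting the passage from $\xrt f$ to $\ft f$ is supplied precisely by the Fourier slice identity.

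First I would fix an arbitrary frequency $\xi\in\R^n$ and look for a direction $v\in S^{n-1}$ with $\xi\cdot v=0$, so that $\xi\in v^\perp$. For $\xi\neq 0$ any unit vector in the hyperplane orthogonal to $\xi$ works, and such a vector exists because $n\geq 2$ (this is the place where the standing dimension assumption is used). For $\xi=0$ one can take any $v\in S^{n-1}$, since $0\in v^\perp$ trivially. Thus every point of frequency space lies in at least one slice $v^\perp$.

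Now, since $f\in C_c(\R^n)$, for each fixed $v$ the function $\xrt f(\,\cdot\,,v)$ is continuous and compactly supported on the hyperplane $v^\perp$, so its Fourier transform on $v^\perp$ is well defined. By the hypothesis $\xrt f\equiv 0$ we have $\xrt f(\,\cdot\,,v)\equiv 0$, and consequently $(\ft_{v^\perp}\xrt f(\,\cdot\,,v))(\xi)=0$ for every $\xi\in v^\perp$. Applying the Fourier slice theorem from exercise~\ref{ex:og-fourier} gives $\ft f(\xi)=0$ for every $\xi\in v^\perp$. Running $v$ over $S^{n-1}$ and using the covering argument of the previous paragraph, we obtain $\ft f(\xi)=0$ for every $\xi\in\R^n$. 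Theorem~\ref{thm:ft} then yields $f=0$.

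I do not anticipate any serious obstacle: the two nontrivial ingredients, namely the Fourier slice identity and the $L^2$-injectivity of $\ft$, have already been established (or are taken as known), and the only geometric input is that in dimension $n\geq 2$ every vector $\xi$ admits an orthogonal unit vector $v$. The one minor point worth flagging is the measure-theoretic compatibility between the Lebesgue measure on $v^\perp$ (identified with $\R^{n-1}$) and the ambient Lebesgue measure used in the definition of $\ft f$; this is implicit in the statement of the slice theorem and requires no further work here.
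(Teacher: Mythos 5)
Your proposal is correct and follows exactly the paper's argument: for each $\xi\in\R^n$ pick $v\in S^{n-1}$ orthogonal to $\xi$ (possible since $n\geq2$), apply the Fourier slice theorem of exercise~\ref{ex:og-fourier} to get $\ft f(\xi)=0$, and conclude by injectivity of the Fourier transform. The extra care you take with the case $\xi=0$ and the measure compatibility on $v^\perp$ is fine but not needed beyond what the paper already does.
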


\begin{proof}
Take any $\xi\in\R^n$ and choose $v\in \Sphere^{n-1}$ so that $v\cdot\xi=0$.
Since $\xrt f(\dummy,v)=0$, exercise~\ref{ex:og-fourier} gives $\ft f(\xi)=0$.
Therefore $\ft f=0$, and by injectivity of the Fourier transform also $f=0$.
\end{proof}

In the plane both the unknown function~$f$ and its X-ray transform~$\xrt f$ can be written in polar coordinates as in section~\ref{sec:ang-fs}.
It is always a good idea to try a Fourier transform, and the function $\xrt f(r,\theta)$ has two variables it could be applied to.
The Fourier transform in the radius~$r$ leads to the Fourier slice theorem of exercise~\ref{ex:og-fourier}.
The Fourier transform in the angle~$\theta$ leads to the angular Fourier series we used in sections~\ref{sec:ang-fs} and~\ref{sec:abel}.
Different choices lead to different paths, but both lead to a proof.

\subsection{Limited angle tomography}

Now we turn to our partial data problem.
Let $D\subset \Sphere^{n-1}$ be the set of allowed directions.
The question is whether $f\in C_c(\R^n)$ is uniquely determined by~$\xrt f(x,v)$ for all $x\in\R^n$ and $v\in D$.

If $D=\Sphere^{n-1}$, then the result is stated in theorem~\ref{xrtthm:fourier} --- and our other injectivity theorems.
However, if~$D$ is finite, it turns out that there is an infinite dimensional subspace of functions $f\in C_c(\R^n)$ for which this data $\xrt f|_{\R^n\times D}=0$.
The simplest case is not hard to see.

\begin{ex}
Suppose $D=\{v\}$ is a singleton.
Show that there is a function $f\in C_c(\R^n)\setminus\{0\}$ for which $\xrt f(\dummy,v)=0$.
(Such functions can be constructed starting from any $g\in C_c^\infty(\R^n)$ which is not identically zero, in a couple of ways. Or you can choose to start in one dimension and then promote into higher dimensions.)
\end{ex}

The general case for any finite set follows from a convolution argument:

\begin{ex}
Fix any $v\in \Sphere^{n-1}$.
Show that
\begin{equation}
\xrt(f*g)(\dummy,v)=(\xrt f(\dummy,v))*g   
\end{equation}
for all $f,g\in C_c(\R^n)$.

It then follows that if $\xrt f_1(\dummy,v_1)=0$ and $\xrt f_2(\dummy,v_2)=0$, then
\begin{equation}
\xrt(f_1*f_2)(\dummy,v)=0
\end{equation}
for both $v\in\{v_1,v_2\}$.
The only thing left to worry about is that the convolution of two non-trivial compactly supported functions cannot vanish identically, but we shall not worry about it here.
(This is indeed true without any further assumptions.)
The same idea generalizes to any finite number of directions.
How would you construct a non-trivial function whose X-ray transform vanishes in all directions in $D=\{v_1,v_2,v_3,v_4\}$?
\end{ex}

Let us denote
\begin{equation}
D^\perp
=
\{\xi\in\R^n;\xi\cdot v=0\text{ for some }v\in D\}.
\end{equation}
Notice that~$D^\perp$ is not the orthogonal complement of the linear space spanned by~$D$.
Instead, it is the union of the orthogonal complements of the elements in~$D$.
In general~$D^\perp$ is not a vector space.\footnote{Usually for a set $D\subset E$ in a real inner product space,~$D^\perp$ denotes the intersection of all~$v^\perp$ for $v\in D$. This is a vector subspace. Now, however,~$D^\perp$ denotes the union rather than the intersection. This is not usually a vector subspace. The reader is advised to be alert with the symbol~$\perp$.}

Now we are ready to state and prove our theorem:

\begin{theorem}
\label{thm:limited-angle}
Let $f\in C_c(\R^n)$.
Suppose $D\subset \Sphere^{n-1}$ is such that $D^\perp\subset\R^n$ contains an interior point.
If $\xrt f(x,v)=0$ for all $x\in\R^n$ and $v\in D$, then $f=0$.
\end{theorem}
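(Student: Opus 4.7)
The plan is to combine the Fourier slice theorem with the Paley--Wiener theorem for compactly supported functions. The proof of Theorem~\ref{xrtthm:fourier} already did the easy half of the work: for each $v \in D$, the hypothesis $\xrt f(\dummy, v) = 0$ together with exercise~\ref{ex:og-fourier} implies that $\ft f(\xi) = 0$ for every $\xi \in v^\perp$. Taking the union over $v \in D$, this gives $\ft f|_{D^\perp} = 0$.

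The next step is to exploit the regularity of $\ft f$. Since $f \in C_c(\R^n) \subset L^1(\R^n)$ is compactly supported, theorem~\ref{thm:pw} (Paley--Wiener) guarantees that $\ft f \colon \R^n \to \C$ is real analytic. By hypothesis, $D^\perp$ contains an interior point, so there is a non-empty open set $U \subset D^\perp$ on which $\ft f$ vanishes. Exercise~\ref{ex:analytic-open} then forces $\ft f \equiv 0$ on all of $\R^n$, and the injectivity of the Fourier transform on $L^2(\R^n)$ (theorem~\ref{thm:ft}) yields $f = 0$.

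The main obstacle---or rather the key conceptual input---is the use of real analyticity: without it, knowing $\ft f = 0$ on merely an open set would be far from enough to conclude $\ft f \equiv 0$, as exercise~\ref{ex:non-analytic} illustrates for merely smooth functions. The interior-point hypothesis on $D^\perp$ is exactly what activates the analytic continuation principle, which is why even a countable dense set of directions $D$ can fail to give injectivity if $D^\perp$ has empty interior, while a tiny open arc of directions suffices. Everything else is bookkeeping built on the Fourier slice identity from the previous subsection.
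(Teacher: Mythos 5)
Your proof is correct and follows exactly the same route as the paper's: the Fourier slice theorem (exercise~\ref{ex:og-fourier}) gives $\ft f|_{D^\perp}=0$, the Paley--Wiener theorem~\ref{thm:pw} gives real analyticity, exercise~\ref{ex:analytic-open} propagates the vanishing from the open subset of $D^\perp$ to all of $\R^n$, and injectivity of the Fourier transform finishes. Nothing is missing.
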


\begin{proof}
Take any $\xi\in D^\perp$.
Then there is $v\in D$ so that $v\cdot\xi=0$.
Since $\xrt f(\dummy,v)=0$, exercise~\ref{ex:og-fourier} gives $\ft f(\xi)=0$.
Therefore the Fourier transform~$\ft f$ vanishes in~$D^\perp$.

The continuous function~$f$ is compactly supported, so by theorem~\ref{thm:pw} the Fourier transform~$\ft f$ is real analytic.
By assumption~$D^\perp$ contains a non-empty open set, and~$\ft f$ vanishes in it.
Now exercise~\ref{ex:analytic-open} implies that~$\ft f$ has to vanish identically due to analyticity.
Since the Fourier transform is injective as mentioned in theorem~\ref{thm:ft}, we conclude that the function~$f$ vanishes identically.
\end{proof}

A new question arises:
How much is needed about the set~$D$ of admissible directions to ensure that~$D^\perp$ contains an interior point?
We will look at a couple of examples.

First, it is clear that~$D$ needs to be uncountable.
If~$D$ is countable, then~$D^\perp$ is a union of countably many hyperplanes, and such a union cannot have interior points.
By a simple approximation argument one can replace~$D$ with~$\bar D$, so we may in fact assume that~$D$ is closed if we want to.

If~$D$ contains an interior point, so does~$D^\perp$ (exercise~\ref{ex:perp-interior}).
In fact, much less is needed, as exercise~\ref{ex:light-cone-normals} shows.

\begin{ex}
\label{ex:light-cone-normals}
Consider the spacetime $\R^4=\R^3\times\R$ and imagine that measurements are only done along light rays.
In natural units ($c=1$) this means that our set~$D$ is
\begin{equation}
\{
(v_1,v_2)\in\R^3\times\R
;
\abs{v_1}^2+\abs{v_2}^2=1,\abs{v_1}=\abs{v_2}
\}
.
\end{equation}
The restriction to unit length is just a feature of our framework and unimportant for this case; the important restriction is $\abs{v_1}=\abs{v_2}$.
This condition defines the light cone (the set of lightlike directions).
Show that
\begin{equation}
D^\perp
=
\{(\eta_1,\eta_2)\in\R^3\times\R;\abs{\eta_1}\geq\abs{\eta_2}\}.
\end{equation}
This contains the spacelike and lightlike directions but not timelike.
\end{ex}

\begin{ex}
\label{ex:perp-interior}
Show that if $v\in \Sphere^{n-1}$ is an interior point of~$D$, then any non-zero $\xi\in\R^n$ orthogonal to~$v$ is an interior point of~$D^\perp$.
\end{ex}

\subsection{On stability and singularities}
\label{sec:stab-sing}

We found out above that~$D^\perp$ having an interior point is sufficient for injectivity.
However, it is insufficient for stability.
Stability (of Lipschitz type) would mean an estimate of the kind
\begin{equation}
\aabs{f}
\leq
C
\aabs{\xrt f}
\end{equation}
with suitable norms and~$\xrt f$ restricted to the set where data is available.
No matter which Sobolev norms one chooses for functions on~$\R^n$ and the relevant subset of~$\Gamma$, there is no continuous left inverse for the partial data X-ray transform.
Stable inversion is possible if $D=\Sphere^{n-1}$, and even if $D^\perp=\R^n$.

The reason for instability is that some kinds of singularities are undetected.
Using microlocal analysis one can have very fine control over singularities, and it is possible to ask whether a distribution is smooth or singular at a given point in a given direction.
This requires the introduction of a wave front set.
To be able to stably detect a singularity at a point $x\in\R^n$ in direction $v\in \Sphere^{n-1}$, the data must contain a line through~$x$ in a direction orthogonal to~$v$.
The smooth part of the data contains information about the singularities, sometimes enough for uniqueness, but any reconstruction is necessarily unstable.
A precise formulation of results of this kind of result is way beyond our reach here.

In our limited angle tomography situation the condition mentioned above amounts to $D^\perp=\R^n$.
In this case one can reconstruct the Fourier transform everywhere directly, and invert the Fourier transform to obtain the original function.
This is stable.
On the other hand, if~$D^\perp$ contains interior points but is not the whole space, then the data needs to be analytically continued (by virtue of exercise~\ref{ex:analytic-open}), and analytic continuation is unstable without strong a priori estimates.

\begin{ex}
Let $\Gamma'\subset\Gamma$ be a set of unoriented lines in~$\R^n$.
Suppose~$\Gamma'$ satisfies the stability condition mentioned above:
For every $x\in\R^n$ and $v\in \Sphere^{n-1}$ there is a line $\gamma\in\Gamma'$ going through~$x$ in a direction orthogonal to~$v$.
Show that if $n=2$, then $\Gamma'=\Gamma$, but if $n\geq3$, this is not necessarily the case.
\end{ex}

For more details on stable recovery and visible singularities, the reader is advised to see~\cite{KQ,Q}\footnote{There is also a book ``Microlocal Analysis and Integral Geometry'' by Stefanov and Uhlmann, currently under preparation.}.
\todo{Update when done!}

\subsection{Local reconstruction}
\label{sec:loc-reconstruction}

Another interesting question is whether a function can be reconstructed at a point from integrals of lines near that point.
More precisely, let $x\in\R^n$ and let $U\ni x$ be a neighborhood (a region of interest).
Does the knowledge of $\xrt f(\gamma)$ for all~$\gamma$ that meet~$U$ determine~$f|_U$ for some class of functions~$f$?

It turns out that this is not possible without prior knowledge of $f|_U$.
However, this data is enough to detect the singularities of~$f|_U$.
That is, one can locally reconstruct jumps and other singularities accurately, but not a smooth function.
Microlocal reconstruction is possible, local is not.
In many practical applications it is indeed important to find the singularities of the unknown to identify sharp features, and it is not a big issue if the smooth part remains beyond reach.

\begin{ex}
\label{ex:roi-normal}
Let $f\in C_c(\R^n)$ and let $U\subset\R^n$ be an open set.
Explain why the integrals of~$f$ over all lines that meet~$U$ determine~$(\xrt^*\xrt f)|_U$.
\end{ex}

However, local reconstruction is possible for the Radon transform in three dimensions.
Consider a point $x\in\R^3$ and a neighborhood $U\ni x$.
Then the integrals of $f\in C_c(\R^3)$ over all hyperplanes that meet~$U$ determine~$f|_U$.
As in exercise~\ref{ex:roi-normal}, this data determines~$R^*Rf|_U$, where~$R$ stands for the Radon transform.
It turns out that for some constant $c>0$ we have $-c\Delta R^*Rf=f$, where~$\Delta$ is the Laplace operator, so that~$f$ can be recovered from~$R^*Rf$ by differentiation.

In~$\R^n$, the normal operator of the X-ray transform can be inverted by the non-local operator~$(-\Delta)^{1/2}$ and that of the Radon transform by $(-\Delta)^{(n-1)/2}$.
Local reconstruction is possible for the Radon transform in odd dimensions starting at three when the exponent $(n-1)/2$ is an integer.
Non-integer powers of the Laplace operator can be defined via Fourier transform.

\begin{ex}
We saw above that local reconstruction for the Radon transform is possible in~$\R^3$.
On the other hand, we saw in exercise~\ref{ex:radon-xrt} that injectivity of the Radon transform implies injectivity for the X-ray transform.
In that exercise we essentially constructed an operator~$A$ so that $R=A\xrt$ and thus the X-ray transform can be inverted by $f=-c\Delta R^*A\xrt f$.
Why does this not lead to local reconstruction for the X-ray transform in~$\R^3$?
\end{ex}

\qa

\section{Outlook}
\label{sec:outlook}

To conclude the course we review some directions of further study on the subject.
The written descriptions are brief to avoid veering to far on a sidetrack; the interested reader is invited to consult the references or the instructor for further details.
The statements will not be made fully precise; the purpose is to give a flavor of what is known, not all details.
Some results have been weakened for technical convenience.
But before looking further, let us summarize the course briefly.

\subsection{Overview of the course}

It is now time to look back and see what, if anything, we have accomplished during the course.
The course started with the physical problem of X-ray tomography and its mathematical formulation.
We then proved a uniqueness result in six different ways.
These results are collected in theorems
\ref{xrtthm:torus},
\ref{xrtthm:cormack},
\ref{xrtthm:radon},
\ref{xrtthm:pestov},
\ref{xrtthm:riesz}, and
\ref{xrtthm:fourier}.
The inversion methods presented here, however, do not exhaust all the known ones.

Our methods had various different assumptions, but they all proved this:

\begin{theorem}
Suppose $f\in C^\infty_c(\R^n)$, $n\geq2$. If $\xrt f=0$, then $f=0$.
\end{theorem}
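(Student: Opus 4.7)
The plan is to invoke Theorem~\ref{xrtthm:fourier}, which is the cleanest of the six routes established during the course because it works directly in any dimension $n \geq 2$ without a dimensional reduction step. Given $f \in C^\infty_c(\R^n) \subset C_c(\R^n)$, the hypothesis $\xrt f = 0$ combined with the Fourier slice identity of exercise~\ref{ex:og-fourier} will force $\ft f$ to vanish everywhere, and then injectivity of the Fourier transform (Theorem~\ref{thm:ft}) delivers $f = 0$.

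More concretely, the first step is to fix an arbitrary $\xi \in \R^n$ and produce a unit vector $v \in S^{n-1}$ with $v \cdot \xi = 0$; this is possible precisely because $n \geq 2$, and this is the only place the dimensional hypothesis enters. The second step applies the Fourier slice theorem: it identifies $\ft f(\xi)$ with the $(n-1)$-dimensional Fourier transform of the slice $\xrt f(\dummy, v) \in C_c(v^\perp)$ evaluated at $\xi$. Since $\xrt f(\dummy, v) \equiv 0$ by hypothesis, its $(n-1)$-dimensional Fourier transform vanishes identically, and in particular $\ft f(\xi) = 0$.

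Because $\xi$ was arbitrary, $\ft f \equiv 0$; applying $\ft^{-1}$ yields $f = 0$. There is essentially no genuine obstacle at this stage of the course, since every ingredient has been developed: the Fourier slice identity is a one-line Fubini computation from the definitions, and the injectivity of $\ft$ on $L^2(\R^n)$ was recorded in Theorem~\ref{thm:ft}. Equivalently, one could route the argument through Theorem~\ref{xrtthm:torus} after a scaling step, through Theorem~\ref{xrtthm:riesz}, or in two dimensions through Cormack, Radon, or Pestov combined with exercise~\ref{ex:2d-hd} — but the Fourier-slice path is the shortest and treats all $n \geq 2$ uniformly, so I would present it as the proof.
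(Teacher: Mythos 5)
Your proposal is correct and coincides with one of the routes the paper itself cites for this summary statement: it is word-for-word the argument proving theorem~\ref{xrtthm:fourier} (fix $\xi$, choose $v\perp\xi$ using $n\geq2$, apply the Fourier slice theorem of exercise~\ref{ex:og-fourier}, conclude $\ft f=0$ and hence $f=0$ by theorem~\ref{thm:ft}). The paper records this theorem as a consequence of any of its six injectivity proofs, so selecting the Fourier-slice one is entirely in line with the text.
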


In addition, we proved a number of related results.
We proved an injectivity result for the X-ray transform on tori (theorem~\ref{thm:xrt-torus}) and solenoidal injectivity for vector field tomography (theorem~\ref{thm:vf}).
We also gave a proof of two partial data results, namely Helgason's support theorem (theorem~\ref{thm:helgason}) and a result in limited angle tomography (theorem~\ref{thm:limited-angle}).

\begin{ex}
We gave six uniqueness proofs for the X-ray transform.
Give a quick overview of the mathematical tools needed for each of the six proofs.
Give six lists, one for each proof.
Which proofs did you find most accessible and simple, and which ones hardest to follow?
\end{ex}

\subsection{Geodesic X-ray tomography}

So far we have studied functions in Euclidean domains and integrated them over straight lines.
But what if the domain is replaced by a Riemannian manifold with boundary and lines by geodesics?
Most of our methods will be inapplicable on manifolds, but not all.

Most of our tools (Fourier series, Fourier transform, convolutions, polar coordinates) fail on a general manifold.
If the manifold happens to be spherically symmetric, then our radial Fourier series approach works:

\begin{theorem}
Equip the closed unit ball $\bar B\subset\R^n$ with a rotation symmetric Riemannian metric so that every maximal geodesic meets the boundary and therefore has finite length.
Such manifolds are called non-trapping.
On a non-trapping spherically symmetric Riemannian manifold a function is uniquely determined by its integrals over all geodesics.
\end{theorem}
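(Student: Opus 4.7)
The plan is to adapt the Cormack-style method of sections~\ref{sec:ang-fs}--\ref{sec:abel}, exploiting rotation symmetry to decouple angular modes. First I would reduce to two dimensions: under a rotation-symmetric metric on $\bar B$, any geodesic lies in a two-plane through the origin (the plane spanned by its initial position and velocity is fixed pointwise by the subgroup $O(n-2)$ of isometries, hence so is the geodesic, which forces it into the fixed-point set of that subgroup), so it suffices to prove injectivity for the restriction of $f$ to every totally geodesic two-disk through the origin. So assume $n=2$.

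After a standard change of radial coordinate, any smooth rotation-symmetric metric on $\bar B$ takes the warped form $ds^2 = dr^2 + h(r)^2\, d\theta^2$ for $r \in [0,R]$, with $h(0)=0$ and $h'(0)=1$. Clairaut's relation says that $p := h(r)\sin\psi$ (where $\psi$ is the angle between the geodesic and the radial direction) is conserved along unit-speed geodesics, so geodesics are uniquely parametrized by the impact parameter $p \in (0, h(R)]$ and the angular position $\phi$ of their closest-approach point. The non-trapping hypothesis, together with $h(0)=0$, forces $h$ to be strictly increasing on $[0,R]$ --- a critical point of $h$ in $(0,R)$ would support a trapped circular geodesic --- so $h\colon [0,R] \to [0, h(R)]$ is a diffeomorphism.

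Next, expand $f(r,\theta) = \sum_{k\in\Z} a_k(r) e^{ik\theta}$ in an angular Fourier series. The argument of lemma~\ref{lma:ang-fs-xrt} used only rotation equivariance of $\xrt$ and Fubini, so it carries over and diagonalizes $\xrt$: the $k$th Fourier mode of $\xrt f$ in $\phi$ depends only on $a_k$. Splitting each geodesic at its closest-approach point and changing variables from arclength to $r$ using $dt/dr = (1 - (p/h(r))^2)^{-1/2}$ yields a generalized Abel-type transform
\[
(\A_k^h a_k)(p) = 2\int_{r_p}^{R} a_k(r) \cos(k\Theta(r,p)) \, \frac{dr}{\sqrt{1-(p/h(r))^2}},
\]
where $r_p = h^{-1}(p)$ and $\Theta(r,p) = \int_{r_p}^r \frac{p\,ds}{h(s)\sqrt{h(s)^2-p^2}}$ is the angular sweep from closest approach out to radius $r$. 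In the Euclidean case $h(r)=r$ this reduces exactly to $\A_k$ of definition~\ref{def:abel}, since $\Theta(r,p) = \arccos(p/r)$.

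It then suffices to show that $\A_k^h$ is injective for every $k\in\Z$. For $k=0$ the substitution $u = h(r)$ reduces $\A_0^h$ to an honest Abel equation of the first kind with the same $1/\sqrt{u^2-p^2}$ singularity as in definition~\ref{def:abel}, and the composition-with-dual argument from the proof of lemma~\ref{lma:abel} inverts it. The main obstacle is that for $k \neq 0$ the Euclidean miracle $\cos(k\Theta) = T_k(p/r)$ and the resulting kernel identity~\eqref{eq:K-fact} have no obvious analogue in a general warped metric, since $\Theta$ is not an elementary function of $p/h(r)$. However only injectivity, not an explicit inversion formula, is needed. My strategy would be to multiply $(\A_k^h a_k)(p)$ by a suitably modified dual Abel kernel in the variable $u = h(r)$, integrate, exchange the order of integration as in the proof of lemma~\ref{lma:abel}, and try to show that the resulting inner kernel is a strictly positive continuous function on the relevant triangle, so the problem reduces to inverting a Volterra equation with a continuous positive weight. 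Verifying this positivity uniformly in $k$ and in the warping function $h$ is, I expect, the technical heart of the proof.
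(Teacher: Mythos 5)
The paper states this theorem in the outlook section without proof (the author only remarks that ``our radial Fourier series approach works''), so there is no proof to compare against line by line; your route is exactly the one alluded to. The preparatory steps are all sound: the reduction to two dimensions via the totally geodesic planes fixed by $O(n-2)$, the warped-product normal form $\der r^2+h(r)^2\der\theta^2$, Clairaut's relation, the observation that a critical point of $h$ yields a trapped circular geodesic so that non-trapping forces $h'>0$, and the rotation-equivariance argument that diagonalizes $\xrt$ into the generalized Abel transforms $\A_k^h$ with kernel $\cos(k\Theta(r,p))/\sqrt{1-(p/h(r))^2}$. The $k=0$ case via the substitution $u=h(r)$ is also fine.

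The genuine gap is that injectivity of $\A_k^h$ for $k\neq0$ is never established, and that is the entire substance of the theorem --- everything before it is a routine adaptation of sections~\ref{sec:ang-fs}--\ref{sec:abel}. You explicitly defer this step (``I expect'', ``the technical heart''), so what you have is a reduction, not a proof. Moreover, the target you propose is not the right one. Strict positivity of the composed kernel $W_k(r,t)$ on the whole triangle $\{r\leq t\}$, uniformly in $k$, is implausible: $\cos(k\Theta)$ oscillates and changes sign for large $\abs{k}$, and the Euclidean case only avoids this because the Chebyshev identity makes $W_k$ exactly constant --- a miracle you yourself note has no analogue for general $h$. And even granted positivity and continuity, a Volterra equation of the \emph{first} kind with a merely continuous kernel is not automatically injective. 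The property one should actually aim for is that $W_k(r,r)$ equals the same nonzero constant $\pi/2$ for every $k$ (since $\Theta$ vanishes at the closest-approach radius, only the $k=0$ diagonal behaviour survives the composition) together with enough regularity of $W_k$ in $r$ to differentiate the first-kind equation and convert it into a second-kind Volterra equation, which Picard iteration then solves uniquely. Establishing that diagonal behaviour and that regularity, uniformly in $k$, is where the non-trapping hypothesis enters beyond making $h$ invertible, and it is precisely the part of the argument that is missing from your proposal.
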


Spherical symmetry is a strong requirement.
Our proof idea with the sphere bundle and the Pestov identity works in more generality:

\begin{theorem}
A compact manifold with boundary is called simple if any two points can be joined with a unique geodesic and the geodesic depends smoothly on its endpoints.
On a simple Riemannian manifold a function is uniquely determined by its integrals over all geodesics.
\end{theorem}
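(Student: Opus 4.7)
My plan is to adapt the sphere-bundle/Pestov-identity method of Section~\ref{sec:pestov} to the Riemannian setting. Let $(M,g)$ be a simple Riemannian manifold, $SM$ its unit sphere bundle, $\pi\colon SM\to M$ the projection, and $X$ the geodesic vector field on $SM$, i.e.\ the generator of the geodesic flow $\phi_t$. In dimension two one still has the vertical vector field $V$ (differentiation along the fibre $S^1$) and a horizontal vector field $X_\perp$; the commutator algebra generalizes the Euclidean case, the only new feature being that $[X,X_\perp]$ now involves the Gaussian curvature $K$. In higher dimensions $V$ and $X_\perp$ are replaced by vertical and horizontal gradient operators on $SM$, and a similar commutator structure is still available.

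For $f\in C^\infty(\bar M)$ I would define the integral function
\begin{equation}
u^f(x,v)=\int_0^{\tau(x,v)}f(\pi(\phi_t(x,v)))\dd t,
\end{equation}
where $\tau\colon SM\to[0,\infty)$ is the exit time. Simplicity---every two boundary points joined by a unique geodesic depending smoothly on its endpoints---is precisely what guarantees that $\tau$ is smooth on $SM$ and hence that $u^f\in C^\infty(SM)$. The transport equation $Xu^f=-\pi^*f$ and the boundary identification $u^f|_{\partial_{\mathrm{in}}(SM)}=\xrt f$ carry over verbatim, so the hypothesis $\xrt f=0$ produces a smooth $u^f$ that vanishes on all of $\partial(SM)$.

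The core analytic step is the Riemannian Pestov identity. On a surface it reads
\begin{equation}
\aabs{VXu}^2=\aabs{XVu}^2+\aabs{Xu}^2-\int_{SM}K\abs{Vu}^2\dd\Sigma
\end{equation}
for smooth $u$ vanishing on $\partial(SM)$, and the derivation of Section~\ref{sec:pestov} goes through once one has Santal\'o's formula on $M$ and verifies that the curvature contribution coming from $[X,X_\perp]$ produces exactly this extra integral. Applying the identity to $u^f$ and using $Xu^f=-\pi^*f$ together with $V\pi^*f=0$ (so that $VXu^f=0$), I would reduce it to
\begin{equation}
\aabs{\pi^*f}^2+\aabs{XVu^f}^2=\int_{SM}K\abs{Vu^f}^2\dd\Sigma.
\end{equation}

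The hard part is controlling the sign of the curvature term, since simple manifolds need not have nonpositive curvature. When $K\leq 0$ the right-hand side is nonpositive while the left-hand side is nonnegative, so both must vanish; in particular $\aabs{\pi^*f}=0$ and hence $f=0$. In the general case I would follow Mukhometov and Pestov--Uhlmann: decompose $u^f$ into its vertical Fourier modes on each fibre, exploit the interplay of $X$ with fibrewise rotations and with the fibrewise Hilbert transform, and extract a positive energy estimate mode by mode. In dimensions $n\geq 3$ the analogous strategy uses the tensor version of the Pestov identity with spherical-harmonic decompositions along fibres $S^{n-1}$ in place of angular Fourier series. This fibrewise positivity argument---converting the a priori indefinite curvature term into a usable energy estimate---is where simplicity enters most forcefully and where the main work of the proof lives.
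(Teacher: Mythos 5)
First, a point of comparison: the paper itself does not prove this theorem. It appears in the outlook (section~\ref{sec:outlook}) as a statement of a known result, and the closest the notes come to a proof is the remark at the end of section~\ref{sec:pestov}, which writes down the curved Pestov identity
\begin{equation}
\aabs{VXu}^2=\aabs{XVu}^2+\aabs{Xu}^2-\int_{SM}K\abs{Vu}^2\dd\Sigma
\end{equation}
and concludes injectivity only under the extra hypothesis $K\leq0$. Your sketch reproduces exactly this framework and correctly locates the difficulty in the curvature term, so up to that point you are faithfully following the route the notes gesture at.

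The genuine gap is that the decisive step is asserted rather than proved. Everything hinges on showing that, on a simple manifold, the quadratic form $w\mapsto\aabs{Xw}^2-\int_{SM}K\abs{w}^2\dd\Sigma$ is nonnegative for $w$ vanishing on $\partial(SM)$; applied to $w=Vu^f$ this forces $\aabs{Xu^f}^2\leq0$ and hence $f=-Xu^f=0$. You name Mukhometov and Pestov--Uhlmann and propose fibrewise Fourier modes together with the fibrewise Hilbert transform, but that machinery is what one needs for tensor fields and attenuated transforms; for scalar functions the standard and much shorter closure is an index-form argument: simplicity excludes conjugate points, so the Riccati equation $Xa+a^2+K=0$ admits a global solution $a$ along the geodesic flow, and then $\aabs{Xw}^2-\int_{SM}K\abs{w}^2\dd\Sigma=\aabs{Xw-aw}^2\geq0$ after an integration by parts. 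Without this (or an equivalent positivity estimate) the argument does not go beyond the $K\leq0$ case already covered in section~\ref{sec:pestov}. A smaller issue: taking $f\in C^\infty(\bar M)$ rather than compactly supported in the interior reintroduces the boundary-regularity problems for $u^f$ at tangential directions that the notes deliberately avoid; simplicity makes $\tau$ smooth in the interior of $SM$ but not up to the glancing part of the boundary.
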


The second theorem does not contain the first one; there are non-simple but non-trapping rotation symmetric manifolds.


\subsection{Tensor tomography}
\label{sec:outlook-tt}

A scalar function can be replaced with a tensor field of any rank.
So far we have studied only rank zero (scalar fields) and rank one (vector fields).
To go further, one must first understand what a tensor field is in the first place, and then figure out how to integrate them along lines (or geodesics).
As in the case of vector fields, there is non-uniqueness for any non-zero rank.
The goal is then to characterize this non-uniqueness.

A symmetric rank~$m$ tensor field on~$\R^n$ is a function $f\colon\R^n\times\R^{nm}\to\R$ so that $f(x;v_1,\dots,v_m)$ is smooth in~$x$, linear in each~$v_i$ and invariant under changes of any two~$v_i$ and~$v_j$.
The integral of such a tensor field over a line $\gamma\colon\R\to\R^n$ is
\begin{equation}
\int_\gamma f
=
\int_\R f(\gamma(t);\dot\gamma(t),\dots,\dot\gamma(t))\der t.
\end{equation}
That is, the velocity~$\dot\gamma$ is plugged into each of the~$m$ slots.

A scalar function is a tensor field of rank zero, and a vector field is a tensor field of rank one.

If $m=1$, there is only one slot for~$v$, and this is the integral of a vector field along a line as defined earlier.
If $m=0$, there are no slots at all and the function only depends on~$x$.
The resulting integral is the usual integral of a scalar function we have studied in this course.

\begin{theorem}
On a two-dimensional simple manifold a tensor field~$f$ of order~$m$ integrates to zero over all geodesics if and only if there is a tensor field~$h$ of order $m-1$ which vanishes at the boundary and satisfies $f=\der^s h$, where~$\der^s$ is a symmetrized covariant derivative.
\end{theorem}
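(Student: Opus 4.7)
The plan is to adapt the Pestov-identity method of Section~\ref{sec:pestov}, following the scalar and vector-field cases in Theorems~\ref{xrtthm:pestov} and~\ref{thm:vf}, but tracking how the angular Fourier decomposition on the fibres of $SM$ interacts with the tensorial rank. The easy direction is a version of the fundamental theorem of calculus along the geodesic flow: if $f=d^s h$ with $h|_{\partial M}=0$, then the pull-back $\pi^*h(x,v)=h(x;v,\dots,v)$ satisfies $X(\pi^*h)=\pi^*(d^s h)=\pi^*f$ on $SM$, so $\int_\gamma f=\pi^*h|_{\text{exit}}-\pi^*h|_{\text{entry}}=0$ for every maximal geodesic, since both endpoints lie on $\partial M$.

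For the converse, assume $\xrt f=0$ and form the integral function $u=u^{\pi^*f}$ as in~\eqref{eq:uf-def}. Simplicity of $M$ plays the role that strict convexity played in Lemma~\ref{lma:u-smooth} and gives $u\in C^\infty(SM\cup\partial SM)$; moreover $u|_{\partial(SM)}=0$, vanishing on the inflow boundary because $\xrt f=0$ and on the rest because $\tau=0$. The transport equation reads $Xu=-\pi^*f$. Decompose $u=\sum_{k\in\Z}u_k$ into angular Fourier modes, with $Vu_k=iku_k$. The key algebraic observation is that a symmetric $m$-tensor pulled back to $SM$ is a trigonometric polynomial in $\theta$ of degree $m$ and parity $m\bmod 2$, so $(\pi^*f)_k=0$ whenever $|k|>m$; conversely, any smooth function on $SM$ whose Fourier support lies in $\{|k|\leq m-1\}$ with the right parity is the pull-back of a unique symmetric $(m-1)$-tensor, and boundary values of the pull-back correspond to boundary values of the tensor.

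The argument then reduces to the following degree-reduction claim: if $Xu$ has angular Fourier degree at most $m$ and $u|_{\partial(SM)}=0$, then $u$ itself has angular degree at most $m-1$. Granted this, let $h$ be the symmetric $(m-1)$-tensor with $\pi^*h=-u$; then $h|_{\partial M}=0$ follows from $u|_{\partial(SM)}=0$, and $X(\pi^*h)=-Xu=\pi^*f$ translates tensorially into $f=d^s h$ using the same correspondence between $X$ on pull-backs and $d^s$ on symmetric tensors that powered the easy direction.

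The main obstacle is the degree-reduction step, and it is the only place where simplicity (rather than mere convexity) is genuinely needed. For $m=0,1$ the claim drops out of the curved Pestov identity $\aabs{VXu}^2=\aabs{XVu}^2+\aabs{Xu}^2-\int_{SM}K\abs{Vu}^2\dd\Sigma$ exactly as in the two theorems cited above, because the right-hand side decouples into sign-definite pieces once one exploits $Vu=0$ (for $m=0$) or $\aabs{VXu}=\aabs{Xu}$ (for $m=1$, as in exercise~\ref{ex:vf-pestov-cancel}). For $m\geq 2$ no such clean cancellation is available, and one has to bring in the Guillemin--Kazhdan-type raising and lowering operators $\eta_\pm=\tfrac12(X\mp iX_\perp)$, which shift angular degree by $\pm 1$ and decouple the transport equation into the coupled system $\eta_+u_{k-1}+\eta_-u_{k+1}=0$ on the Fourier modes with $|k|>m$. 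Running a Pestov-type energy estimate on the high-frequency tail $\sum_{|k|\geq m}u_k$ and exploiting the absence of conjugate points on a simple surface to obtain the requisite coercivity forces the tail to vanish. Making that final coercivity step precise is the technical heart of tensor tomography on surfaces and would require input beyond the present notes.
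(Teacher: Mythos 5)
First, a point of context: the paper does not prove this theorem. It appears in the Outlook (section~\ref{sec:outlook-tt}), where the author explicitly states that the results are quoted from the literature ``to give a flavor of what is known'' and will not be made precise, so there is no in-paper proof to compare your argument against. Your sketch correctly reproduces the known strategy (the easy direction via $X\pi^*h=\pi^*(\der^s h)$ and the fundamental theorem of calculus along geodesics; the converse via the transport equation $Xu^f=-\pi^*f$, the identification of symmetric $m$-tensors with fibrewise trigonometric polynomials of degree~$m$ and parity $m\bmod 2$, and the reduction to a degree-lowering statement for solutions of the transport equation vanishing on $\partial(SM)$). All of that is accurate, and the easy direction is essentially complete.

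The genuine gap is the one you name yourself: the degree-reduction claim for $m\geq 2$. That claim \emph{is} the theorem --- it is the main result of Paternain--Salo--Uhlmann's tensor tomography on surfaces --- and your proposal does not prove it. The final paragraph asserts that a ``Pestov-type energy estimate on the high-frequency tail'' combined with ``the absence of conjugate points'' yields the required coercivity, but this is not something that follows from the Pestov identity of proposition~\ref{prop:pestov} or its curved analogue: for $m\geq 2$ the term $\aabs{VXu}^2$ genuinely exceeds $\aabs{XVu}^2$ on the relevant modes and no sign-definite decoupling survives, which is exactly why the problem was open for so long. The actual proof requires an additional analytic input --- the existence of holomorphic integrating factors for the transport equation on simple surfaces (equivalently, surjectivity properties of the adjoint of a weighted/attenuated ray transform) --- which lets one conjugate the raising and lowering operators $\eta_\pm$ so that the Fourier tail can be killed one mode at a time. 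Without that ingredient the ``coercivity step'' is an assertion, not an argument, so the proposal should be regarded as a correct road map with the decisive step missing rather than a proof.
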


\subsection{Linearization of travel time}

For one specific example of applications, we can consider travel time tomography in seismology.
The problem can be recast as a geometrical one, once the Earth is treated as a geometrical object.
The linearized problem has non-uniqueness, but it corresponds to the non-uniqueness inherent to the geometrical problem.
This has physical repercussions.

\begin{theorem}
\label{thm:outlook-lin}
Let~$M$ be a manifold and~$g_s$ a family of Riemannian metrics on it, depending on a parameter $s\in\R$.
Consider two points $x,y\in M$.
Let~$\gamma_s$ be a geodesic with respect to the metric~$g_s$ joining these two points.
Suppose~$\gamma_s$ depends smoothly on~$s$ and denote the length of~$\gamma_s$ by~$\ell_s(\gamma_s)$.
Denote by $f_s=\partial_sg_s$ the second order tensor field obtained by differentiating the metric with respect to the parameter.
Then
\begin{equation}
\frac{\der}{\der s}\ell_s(\gamma_s)
=
\frac12\int_{\gamma_s}f_s.
\end{equation}
\end{theorem}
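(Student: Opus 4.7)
My plan is to decouple the two sources of $s$-dependence in $\ell_s(\gamma_s)$ --- the metric $g_s$ and the curve $\gamma_s$ itself --- and to show that only the former contributes. Fix a parametrization $t\in[0,1]$ of each $\gamma_\sigma$ and introduce the two-parameter quantity
$L(s,\sigma)=\int_0^1\sqrt{g_s(\dot\gamma_\sigma(t),\dot\gamma_\sigma(t))}\dd t$,
so that $\ell_s(\gamma_s)=L(s,s)$. The chain rule then gives $\frac{\der}{\der s}\ell_s(\gamma_s)=(\partial_s L)(s,s)+(\partial_\sigma L)(s,s)$, and I will treat the two terms separately.

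For the first term, smoothness in $s$ and compactness of $[0,1]$ let me differentiate under the integral sign to obtain
$(\partial_s L)(s,s)=\int_0^1\frac{f_s(\dot\gamma_s(t),\dot\gamma_s(t))}{2\sqrt{g_s(\dot\gamma_s(t),\dot\gamma_s(t))}}\dd t$.
Changing variables from $t$ to $g_s$-arc length $\tau$ along $\gamma_s$ (so that $\der\tau=\sqrt{g_s(\dot\gamma_s,\dot\gamma_s)}\dd t$) and exploiting bilinearity of $f_s$ to replace $\dot\gamma_s$ by the $g_s$-unit tangent $T=\der\gamma_s/\der\tau$ collapses the integral to $\frac12\int_0^{\ell_s(\gamma_s)}f_s(T,T)\dd\tau$, which is by definition $\frac12\int_{\gamma_s}f_s$ in the tensor-integration convention of section~\ref{sec:outlook-tt}.

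For the second term, $(\partial_\sigma L)(s,\sigma)|_{\sigma=s}$ is precisely the first variation of the $g_s$-length functional at the curve $\gamma_s$ along the variation field $V(t)=\partial_\sigma\gamma_\sigma(t)|_{\sigma=s}$. All curves $\gamma_\sigma$ join the fixed points $x$ and $y$, so $V$ vanishes at $t=0$ and $t=1$. The classical first variation formula, obtained by covariant integration by parts in $g_s$, writes this derivative as an integral of the $g_s$-inner product of $V$ against the $g_s$-covariant acceleration of $\gamma_s$ plus endpoint terms. The bulk term vanishes because $\gamma_s$ is a $g_s$-geodesic, and the endpoint terms vanish because $V$ does at $t=0,1$. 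Hence $(\partial_\sigma L)(s,s)=0$, and adding the two contributions yields the claimed identity.

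The computational heart --- differentiating a square root and changing to arc length --- is routine. The main obstacle is the clean invocation of the first variation formula: I do not have the luxury of choosing $\gamma_\sigma$ to be $g_s$-unit-speed for $\sigma\neq s$, so the integration by parts must be done for a curve of nonconstant $g_s$-speed. This is a bookkeeping issue handled by the reparametrization invariance of length, and the standard geodesic variation argument then closes the proof.
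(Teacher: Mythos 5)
Your argument is correct. Note, however, that the paper itself gives no proof of theorem~\ref{thm:outlook-lin}: it appears in the outlook section, which explicitly disclaims full precision, and the statement is left unproved there. So there is nothing in the source to compare against; what you have written is the standard first-variation argument, and it is the right one. The decomposition $\ell_s(\gamma_s)=L(s,s)$ with the chain rule splitting the derivative into a ``metric variation'' term and a ``curve variation'' term is exactly the expected structure: the curve variation term dies because $\gamma_s$ is a critical point of the $g_s$-length functional among curves with fixed endpoints, and the metric variation term produces $\tfrac12\int_{\gamma_s}f_s$ after passing to arc length, with the factor $\tfrac12$ coming from differentiating the square root. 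Your handling of the two small technical points is also sound: bilinearity lets you trade $\dot\gamma_s$ for the unit tangent after dividing by $\abs{\dot\gamma_s}_{g_s}^2$, and the non-unit-speed issue in the first variation is harmless because length is reparametrization invariant, so you may as well take $\gamma_s$ itself parametrized proportionally to $g_s$-arc length before invoking the formula. The only implicit hypothesis worth flagging is that $\gamma_s$ is nonconstant, so that the square root is differentiable along the curve; this is automatic for $x\neq y$.
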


The boundary distance function of a manifold~$M$ with boundary is the restriction of the distance function $d\colon M\times M\to\R$ to $\partial M\times\partial M$.
Then one can ask whether the Riemannian manifold $(M,g)$ is uniquely determined by its boundary distance function.
This problem is hard and non-linear, but the linearization is simpler.

Linearized travel time tomography is tensor tomography of rank two.
If one studies conformal variations of a metric, then each~$g_s$ is a conformal multiple of~$g_0$.
In that case~$f_s$ is also a conformal multiple of~$g_s$, and the tensor tomography problem reduces to a scalar tomography problem for the conformal factor.

Both the original problem and the linearized one have a gauge freedom --- an inherent invisibility or invariance.
In the original problem one can change coordinates on~$M$ by a diffeomorphism $\phi\colon M\to M$ and change the metric from~$g$ to~$\phi^*g$ and the data stays the same as long as $\phi|_{\partial M}=\id$.
In the linearized problem one can only hope to reconstruct the metric perturbation~$f_s$ up to tensor fields of the form~$\der^sh$, where~$h$ is a covector field (one-form) vanishing at the boundary.
It turns out that this gauge freedom in the linearized problem is the linearization of the gauge freedom of coordinate changes in the non-linear one.

We also discussed a linearized travel time tomography problem in section~\ref{sec:doppler-application}.

\subsection{Other classes of curves}

So far we have only integrated functions (and vector fields) over straight lines.
We also mentioned Riemannian geodesics, but there several other options as well.
Of course, this can be seen as the mathematical art of (over)generalization, but a great number of different geometrical situations turn out to be physically relevant.

As a broad term, inverse problems in integral geometry ask to recover an object from its integrals over a collection of subsets of the space.
Integral geometry questions are often asked in a differential geometric setting, but the apparent duality of the concepts is coincidental.

Some classes of curves to consider:
\begin{itemize}
\item Lines in~$\R^n$.
\item Circles in~$\R^2$.
\item Geodesics on a Riemannian manifold.
\item Magnetic geodesics on a manifold or~$\R^n$.
\item Geodesics on a Finsler manifold.
\item Integral curves of a dynamical system.
\item Curves which reflect or split in some way.
\end{itemize}

\subsection{Further reading}

The following selection of books, lecture notes and review articles is by no means complete.
The sources listed here are available online.

\begin{itemize}
\item Gunther Uhlmann, ``Inverse problems: seeing the unseen'' \cite{U:ip}: An overview of two important inverse problems, namely travel time tomography and Calder\'on's problem. Both are related to X-ray transforms of some kind.
\item Sigur\dh{}ur (Sigurdur) Helgason, ``The Radon Transform'' and ``Integral Geometry and Radon Transforms'' \cite{H:rt,H:igrt}: The books are freely available on the author's homepage. They give a very thorough treatment of the theory of X-ray transforms and particularly Radon transforms.
\item Vladimir Sharafutdinov, ``Ray Transform on Riemannian Manifolds. Eight Lectures on Integral Geometry'' \cite{S}: Lecture notes on X-ray tomography on Riemannian manifolds.
\item Gabriel Paternain, Mikko Salo, and Gunther Uhlmann, ``Tensor tomography: progress and challenges'' \cite{PSU} and Joonas Ilmavirta and Fran\c{c}ois Monard, ``Integral geometry on manifolds with boundary and applications'' \cite{IM}: Review articles on on X-ray tomography on manifolds.
\item Gabriel Paternain, Mikko Salo, and Gunther Uhlmann, ``Geometric inverse problems, with emphasis on two dimensions'' \cite{PSU-book}: A book covering tensor tomography on manifolds and related topics. Freely available as a digital copy.
\item Will Merry and Gabriel Paternain, ``Inverse Problems in Geometry and Dynamics'' \cite{MP}: Lecture notes on geodesic X-ray tomography with a dynamical focus.
\item Gunther Uhlmann and Hanming Zhou, ``Journey to the Center of the Earth'' \cite{UZ}: A review article of travel time tomography, which is closely related to tensor tomography.
\end{itemize}

\qa

\subsection{Feedback}

This course is somewhat unusual, compared to most courses in mathematics.
To make the course more suitable for students in the future, the last exercises concern the course itself.

\begin{ex}
This course introduced a physical problem and introduced a number of mathematical tools related to that application.
Should there have been more focus on physics --- more applications, more details, deeper explanations, or something else?
Was the balance between physics and mathematics good for your interests?
(Also, what is your major?)
\end{ex}

\begin{ex}
The course was designed to be broad but shallow.
We discussed a number of different mathematical tools and ideas related to X-ray tomography, but we did not go very deep into any of them.
This was done so as to give you a broad overview of the topic and an example of how various different tools in analysis can be used to tackle the same applied problem.
On the other hand, we could have developed a theory with optimal regularity (not restricting to continuous functions all the time), stability estimates, range characterizations, various function spaces and the like.
Should the course have been deeper as opposed to broad?
Would you have preferred to see one theory developed in detail rather than several independent ideas?
It would also be possible to give a broad introductory course like this one and then a deeper follow-up course.
\end{ex}

\begin{ex}
Answer the following questions:
\begin{enumerate}[(a)]
\item How much time did you spend on the course?
\item What were the worst things about the course?
\item Would you be interested in a course in the geometry of geodesics and geodesic X-ray tomography on Riemannian manifolds?
\item Do you have any feedback in mind that was not covered by other questions?
\end{enumerate}
Thank you for your feedback!
\end{ex}

Previous feedback has been of immense help in improving these notes.
Many thanks for all students who contributed!

\appendix

\section{Notation}

If you feel some notation is perplexing and should be either explained more or added here, let the author know.
Notation introduced in section~\ref{sec:outlook} has been excluded.

\subsection{Sets and spaces}

\begin{itemize}
\item $\R^n$: Our Euclidean spaces have dimension $n\geq2$ unless otherwise mentioned.
\item $\Sphere^n$: The unit sphere, the subset of $\R^{n+1}$ consisting of unit length vectors.
\item $\T^n=\R^n/2\pi\Z^n$: The $n$-dimensional torus.
\item $\Omega$: A subset of~$\R^n$, typically open.
\item $\gamma$: A straight line or a geodesic.
\item $\Gamma$: The set of all lines; see section~\ref{sec:measures} for integration.
\item $\pud$: The punctured unit disc; see~\eqref{eq:pud}.
\item $S\Omega$: The sphere bundle; see section~\ref{sec:geod-geom}.
\item $\inwb$: The inward pointing boundary of the sphere bundle; see section~\ref{sec:geod-geom}.
\item $v^\perp$: The subspace orthogonal to~$v$ or the rotation of~$v$.
\end{itemize}

\subsection{Derivatives}

\begin{itemize}
\item $\nabla f$: The gradient.
\item $\nabla\cdot f$: The divergence.
\item $\nabla\times f$: The curl.
\item $\dot\gamma(t)$: The derivative of a curve~$\gamma(t)$ with respect to~$t$.
\item $X$: The geodesic vector field; see section~\ref{sec:geod-geom}.
\item $V$: The vertical vector field; see section~\ref{sec:2D-SM}.
\item $X_\perp$: The horizontal vector field; see section~\ref{sec:2D-SM}.
\item $\partial^\alpha=\partial_1^{\alpha_1}\cdots\partial_n^{\alpha_n}$: The derivative of order $\alpha\in\N^n$. The factorial of a multi-index is $\alpha!=\alpha_1!\cdots\alpha_n!$.
\item $\Delta=\partial_1^2+\cdots+\partial_n^2$: The Laplace operator.
\end{itemize}

\subsection{Function spaces}

\begin{itemize}
\item $C(X;Y)$: The space of continuous functions $X\to Y$.
\item $C(X)$: The space $C(X;Y)$ with $Y=\R$ or $Y=\C$ depending on context. The norm on this space is $\aabs{f}=\sup_{x\in X}\abs{f(x)}$.
\item $C^k(X;Y)$: The space of~$k$ times continuously differentiable functions.
\item $C^k_c(X;Y)$: The space of~$k$ times continuously differentiable functions with compact support.
\item $C_B$: The space of continuous functions on~$\R^n$ supported in the closed unit ball~$\bar B$.
\item $C_b(X)$: The space of continuous and bounded functions on~$X$.
\item $L^p(X)$ (where usually $X\subset\R^n$):
\begin{itemize}
\item If $1\leq p<\infty$: The space of (Lebesgue-measurable) functions $f\colon X\to\C$ (or to~$\R$) for which $\int_X\abs{f(x)}^p\dd x<\infty$. The norm on this space is $\aabs{f}=\left(\int_X\abs{f(x)}^p\dd x\right)^{1/p}$.
\item If $p=\infty$: The space of bounded (Lebesgue-measurable) functions $f\colon X\to\C$ (or to~$\R$). The norm on this space is $\aabs{f}=\sup_{x\in X}\abs{f(x)}$.
\end{itemize}
\item $\ell^2(I)$: The space of all functions $a\colon I\to\C$ (or to~$\R$) that satisfy $\sum_{i\in I}\abs{a(i)}^2<\infty$. This is an inner product space, and the norm on this sequence space is $\aabs{a}=\left(\sum_{i\in I}\abs{a(i)}^2\right)^{1/2}$.
(Here~$I$ can be any set, thought of as a set of indices. The functions $I\to\C$ are thought of as functions and often denoted $a(i)=a_i$.)
\end{itemize}

\subsection{Integral operators}

\begin{itemize}
\item $\xrt$: The X-ray transform. The basic definition was given in~\ref{def:xrt}. There are various different incarnations throughout the course; see these lecture notes.
\item $\xrt_v$: The X-ray transform indexed by a direction~$v$; see section~\ref{sec:torus}.
\item $\ft$: The Fourier transform or series. (Not denoted by hats in this course.)
\item $R$: The Radon transform; see section~\ref{sec:radon-transform}.
\item $\A_k$: The generalized Abel transform; see section~\ref{sec:abel}.
\item $I_\alpha$: The Riesz potential; see sections~\ref{sec:normal} and~\ref{sec:riesz}.
\end{itemize}

\subsection{Miscellaneous}

\begin{itemize}
\item $\spt f$: The support of a function, the closure of $f^{-1}(\R\setminus\{0\})$.
\item $\h^d$: The Hausdorff measure of dimension~$d$.
\item $q$: The quotient map $\R^n\to\T^n$; see section~\ref{sec:torus}.
\item $\rot_\phi$: The rotation by angle~$\phi$; see section~\ref{sec:ang-fs}.
\item $\phi_t$: The geodesic flow; see section~\ref{sec:geod-geom}.
\item $\pi$: The projection $S\Omega\to\Omega$; see section~\ref{sec:geod-geom}.
\item $\pi^*$: The pullback. In this course simply $\pi^*f=f\circ\pi$.
\item $A^*$: The (formal) adjoint of a linear operator~$A$; see section~\ref{sec:normal}.
\item $[A,B]=AB-BA$: The commutator of linear operators~$A$ and~$B$; see section~\ref{sec:commutators}.
\end{itemize}

\end{document}